\theoremstyle{plain}
\newtheorem{theo}{Theorem}[section]
\crefname{theo}{Theorem}{Theorems}
\Crefname{theo}{Theorem}{Theorems}
\newtheorem{prop}[theo]{Proposition}
\crefname{prop}{Proposition}{Propositions}
\Crefname{prop}{Proposition}{Propositions}
\newtheorem{lem}[theo]{Lemma}
\crefname{lem}{Lemma}{Lemmas}
\Crefname{lem}{Lemma}{Lemmas}
\crefname{conj}{Conjecture}{Conjectures}
\Crefname{conj}{Conjecture}{Conjectures}
\newtheorem{cor}[theo]{Corollary}
\crefname{cor}{Corollary}{Corollaries}
\Crefname{cor}{Corollary}{Corollaries}
\crefname{claim}{Claim}{Claims}
\Crefname{claim}{Claim}{Claims}
\crefname{property}{Property}{Properties}
\Crefname{property}{Property}{Properties}
\crefname{problem}{Problem}{Problems}
\Crefname{problem}{Problem}{Problems}
\theoremstyle{definition}
\newtheorem{defi}[theo]{Definition}
\crefname{defi}{Definition}{Definitions}
\Crefname{defi}{Definition}{Definitions}
\crefname{notation}{Notation}{Notations}
\Crefname{notation}{Notation}{Notations}
\crefname{convention}{Convention}{Conventions}
\Crefname{convention}{Convention}{Conventions}
\crefname{cond}{Condition}{Conditions}
\Crefname{cond}{Condition}{Conditions}
\newtheorem{assum}[theo]{Assumption}
\crefname{assum}{Assumption}{Assumptions}
\Crefname{assum}{Assumption}{Assumptions}
\newtheorem{ex}{Example}[section]
\crefname{ex}{Example}{Examples}
\Crefname{ex}{Example}{Examples}
\theoremstyle{remark}
\newtheorem{rem}[theo]{Remark}
\crefname{rem}{Remark}{Remarks}
\Crefname{rem}{Remark}{Remarks}
\theoremstyle{question}
\crefname{ques}{Question}{Questions}
\Crefname{ques}{Question}{Questions}
\crefname{section}{Section}{Sections}
\Crefname{section}{Section}{Sections}
\crefname{subsection}{Subsection}{Subsections}
\Crefname{subsection}{Subsection}{Subsections}
\crefname{figure}{Figure}{Figures}
\Crefname{figure}{Figure}{Figures}
\newcommand{\Z}{\mathbb{Z}}
\newcommand{\R}{\mathbb{R}}
\newcommand{\C}{\mathbb{C}}
\newcommand{\pt}{\mathrm{pt}}
\newcommand{\diag}{\mathrm{diag}}
\newcommand{\fraks}{\mathfrak{s}}
\newcommand{\frakt}{\mathfrak{t}}
\newcommand{\SW}{\mathrm{SW}}
\newcommand{\id}{\mathrm{id}}
\newcommand{\wt}{\widetilde}
\def\C{\mathbb{C}}
\def\spinc{\text{spin}^c}
\def\wt{\widetilde}
\newcommand{\Diff}{\mathrm{Diff}}
\newcommand{\del}{\partial}
\newcommand{\rank}{\mathop{\mathrm{rank}}\nolimits}
\newcommand{\Th}{\mathop{\mathrm{Th}}\nolimits}
\newcommand{\F}{\mathbb{F}}
\newcommand{\s}{\mathfrak{s}}
\newcommand{\FSW}{\mathrm{FSW}}
\title[exotic embeddings from diagonalization]{Exotically knotted closed surfaces from Donaldson's diagonalization for families}
\begin{document}

\author{Hokuto Konno}
\address{Graduate School of Mathematical Sciences, the University of Tokyo, 3-8-1 Komaba, Meguro, Tokyo 153-8914, Japan}
\email{konno@ms.u-tokyo.ac.jp}

\author{Abhishek Mallick}
\address{Department of Mathematics, Rutgers University, Hill Center, Busch Campus, 110 Frelinghuysen Road
Piscataway, NJ 08854, USA}
\email{abhishek.mallick@rutgers.edu}

\author{Masaki Taniguchi} 
\address{Department of Mathematics, Graduate School of Science, Kyoto University, Kitashirakawa Oiwake-cho, Sakyo-ku, Kyoto 606-8502, Japan}
\email{taniguchi.masaki.7m@kyoto-u.ac.jp}

\maketitle

\begin{abstract}
We introduce a method to detect exotic surfaces without explicitly using a smooth 4-manifold invariant or an invariant of a 4-manifold-surface pair in the construction.  Our main tools are two versions of families (Seiberg--Witten) generalizations of Donaldson's diagonalization theorem, including a real and families version of the diagonalization. This leads to an example of a pair of exotically knotted $\mathbb{R}P^2$'s embedded in a closed 4-manifold whose complements are diffeomorphic, making it the first example of a non-orientable surface with this property. In particular, any invariant of a 4-manifold-surface pair (including invariants from real Seiberg--Witten theory such as Miyazawa's invariant) fails to detect such an exotic $\mathbb{R} P^2$. 
One consequence of our construction reveals that \textit{non}-effective embeddings of corks can still be useful in pursuit of exotica. Precisely, starting with an embedding of a cork $C$ in certain a 4-manifold $X$ where the cork-twist does \textit{not} change the diffeomorphism type of $X$, we give a construction that provides examples of exotically knotted spheres and $\mathbb{R}P^2$'s with diffeomorphic complements in $ C \# S^2 \times S^2 \subset X \# S^2 \times S^2$ or $C \# \mathbb{C}P^2  \subset X \# \mathbb{C}P^2 $. In another direction, we provide infinitely many exotically knotted embeddings of orientable surfaces, closed surface links, and 3-spheres with diffeomorphic complements in once stabilized corks, and show some of these surfaces survive arbitrarily many internal stabilizations. By combining similar methods with Gabai's 4D light-bulb theorem, we also exhibit arbitrarily large difference between algebraic and geometric intersections of certain family of 2-spheres, embedded in a 4-manifold.

\end{abstract}


\section{Introduction}
 Finashin, Kreck, and Viro \cite{FKV87} gave the first example of a pair of exotic surface by showing the existence of an exotic $\#_{10} \mathbb{R}P^2$ in $S^4$. Since then the study of exotic surfaces has evolved to be one of the central topic of interest in low-dimensional topology. Indeed, a major open question in the field is to construct a pair of closed orientable and exotically knotted surface in $S^4$ (or in 4-manifolds as small as possible). Here, by exotic surfaces, we refer to a pair of closed surfaces $S_1$ and $S_2$ embedded in a 4-manifold $X$ such that $S_1$ is topologically isotopic (rel $\partial X$) to $S_2$, but they are not smoothly isotopic. Oftentimes in the literature, in lieu of obstructing the smooth isotopy directly, one obstructs the existence of a diffeomorphism of $X$ taking $S_1$ to $S_2$. Borrowing terminology from \cite{Fi02}, we refer to such a pair of surfaces as \textit{non-equivalent}, with the understanding that if the surfaces were related by a diffeomorphism of $X$, we shall refer to it them as \textit{equivalent} surfaces.

There are many instances in the literature where smooth 4-manifold invariants such as the Seiberg--Witten invariants have been used to distinguish pairs of non-equivalent surfaces. One of the widely used strategies for such obstruction has been to use the Fintushel-Stern rim surgery construction  \cite{FS97} together with the formula for the Seiberg--Witten invariant under the surgery. For instance, in \cite{Fi02} Finashin used a similar strategy to produce non-equivalent surfaces in $\mathbb{C}P^2$ (these surfaces were shown to be topologically isotopic, hence exotic, by Kim-Ruberman \cite{kim2008smooth}). Several other articles have followed a similar strategy, see for example Hayden \cite{Ha20}. Other approaches arose from the work of Akbulut \cite{akbulut2014isotoping}, and Auckly-Kim-Melvin-Ruberman \cite{AKHMR15} and Baykur-Hayano \cite{baykur2016multisections}.

The unoriented version of this question also has been well-studied in the literature which again began with the work of \cite{FKV87} producing non-equivalent $\#_{10} \mathbb{R}P^2$ in $S^4$. Later Finashin-Kreck-Viro \cite{Fi06}, Finashin \cite{Fi07}, Havens \cite{Ha21}, Levine-Lidman-Piccirillo \cite{LLP23} and Mati\'c-\"Ozt\"urk-Reyes-Stipsicz-Urz\'ua\cite{matic2023exotic} made progress in this realm.

\noindent
Notably, a recent interesting result of Miyazawa \cite{Mi23}, shows that the existence of a pair of a non-equivalent $\mathbb{R}P^2$ embedded in $S^4$ using real Seiberg--Witten theory. This construction relies on an invariant of the 4-manifold-surface pair.

Although we exclusively focus on closed surfaces, several studies have also produced exotic surfaces bounded by knots in $B^4$, these include the work of  Hayden-Sundberg \cite{KI24}, using Khovanov homology, Juh\'asz-Miller-Zemke \cite{JMZ21} using twisted link Floer homology, and Dai-Stoffregen and the second author \cite{DMS23} using knot Floer homology (see also \cite{hayden2021corks, DHM22}).

All of the studies mentioned above for closed exotic surface detection in the literature in fact factor through proving that some operation made to the surfaces which results into non-diffeomorphic 4-manifolds (or the 4-manifold-surface pair). Such operations include blowing-down spheres, taking the double-branched covers, using various surgery operations and so on, coupled with a suitable smooth 4-manifold invariant which is used to distinguish the resulting 4-manifolds.

In particular, the possible existence of exotic surfaces that are diffeomorphic (as a 4-manifold-surface pair) remains to the blind spot of these techniques. It is then natural to ask when it is possible to produce exotic surfaces that are in fact related by a diffeomorphism of the closed 4-manifold, but there is no smooth isotopy of the 4-manifold that takes one surface to another, i.e. exotic surfaces that are \textit{equivalent}. In order to distinguish a pair of equivalent exotic surfaces, 4-manifold invariants are typically not that useful. 

The other motivation for studying equivalent surfaces stem from the Cerf's theorem \cite{cerf2006diffeomorphismes} which says that any two embeddings of $S^1$ in $S^3$, that are related by a diffeomorphism are in fact smoothly isotopic. Hence it is natural to contrast this behavior with the 4-dimensional case. 

The first systematic study of equivalent but exotic surfaces was given by the work of Baraglia \cite{Ba20} using the adjunction inequality from family Seiberg--Witten thoery. Later Auckly \cite{Au23} also used family adjunction inequality to produce closed equivalent exotic surfaces that survive many internal stabilizations. In particular, in \cite{Ba20} Baraglia showed the existence of equivalent exotic surfaces in $\#_n K3  \#_n S^2 \times S^2$, for any $n >0$. Later in \cite{Au23}, Auckly proved a similar statement for any 4-manifold $X$ stabilized by a number of $S^2 \times S^2$'s (where the number of such stabilization depends on $X$). However, all of these exotic surface detection results in the literature (equivalent or otherwise) requires the existence of a basic class of a 4-manifold.  Hence the 4-manifolds where such equivalent exotic surfaces were produced in \cite{Ba20, Au23} (see also \cite{schwartz2019equivalent}) had rather complicated intersection forms.

The primary aim of this article is to introduce a new tool for exotic surface detection without using any 4-manifold invariant or any invariant of the 4-manifold-surface pair. In fact, this procedure yields exotic surfaces that are equivalent. To achieve this, we use constraints obtained as certain generalizations of Donaldson's diagonalization theorem. This tool stems from a 1-parameter family version of real Seiberg--Witten theory. This leads to the first example of a non-orientable equivalent exotic manifold in a closed 4-manifold. We also use 2-parameter family version of Seiberg--Witten theory to obtain equivalent exotic spheres in a closed 4-manifold. The underlying geometric construction of our work also uses non-effective embeddings of corks \cite{Ak91_cork} in an interesting manner. In particular, since we do not use any exotic smooth structures in our construction, the 4-manifolds in which our exotic surfaces lie has comparatively simpler intersection form.

In another direction, we use family adjunction inequality to produce a wide range of equivalent exotic embeddings into a small 4-manifolds, such as a once stabilized contractible manifold. These embeddings include closed surfaces which survive many internal stabilizations. We also produce exotic surface links and exotic embeddings of $S^3$ in such a manifold. Using a similar approach, we also capture that the difference between algebraic and geometric intersection points of certain embedded 2-spheres in once stabilized contractible manifolds, can be arbitrarily large. Let us now state our results in more detail. 
\subsection{Exotic surfaces and Donaldson's diagonalization}
We introduce two new tools for exotic surface detection, which can be broadly characterized as generalizations of Donaldson's diagonalization theorem in an appropriate sense.
\subsubsection{\bf{Using the real family Seiberg--Witten theory}}
We start by formalizing a few notions by means of definitions which would be helpful for the statement of our theorems. 
In this paper, we discuss embedding of surfaces $i : S \to X$ into a 4-manifold $X$.
However, we often drop the notation of embedding $i$ and write $S \subset X$ by abuse of notation. This remark applies also to embeddings of 3-manifolds.

\begin{defi}\label{defi:stro_equi_closed}
We say that a pair of smoothly embedded closed surfaces $i: S\to X$ and $i': S\to X$ in a closed smooth 4-manifold $X$ are {\it equivalent} if there is an orientation preserving diffeomorphism $f: X \to X$ such that $f \circ i =i'$.  
\end{defi}
\noindent
From now on, we abbreviate the notation $i : S \to X$ of an embedding and simply write $S$ for the embedding. 
In this paper, we also consider closed surfaces embedded in a 4-manifold $X$ with boundary. Henceforth, we assume that surfaces are embedded in the interior of $X$, which, of course, can be achieved by isotopy without loss of generality. For 4-manifolds with boundary, we have natural two notions of equivalences.  
\begin{defi}\label{defi:strong_weak_equi_boundary}
Let $X$ be a smooth manifold with boundary.
A pair of smoothly embedded closed surfaces $S$ and $S'$ in $X$ are said to be {\it strongly equivalent via} $f$,  if there is a diffeomorphism $f : X \to X $ that satisfies $f(S)=S'$ and $f|_{\del X} = \id_{\del X}$.
If there is a diffeomorphism $f : X \to X$ that does not necessarily restrict to the identity of $\del X$, we say that $S$ and $S'$ are {\it weakly equivalent via} $f$. When the diffeomorphism $f$ is clear from the context, we will omit referring to it and simply regard the surfaces as strongly (weakly) equivalent. 
\end{defi}
Let us also formalize the definition of exotically knotted surfaces embedded in 4-manifolds with boundary.

\begin{defi}
\label{defi: relatively exotic}
Let $X$ be a smooth manifold with boundary.
A pair of smoothly embedded closed surfaces $S$ and $S'$ is said to be {\it relatively exotic} if there is a topological isotopy from $S$ to $S'$ relative to $\del X$, but there is no smooth isotopy from $S$ to $S'$ relative to $\del X$.
\end{defi}

We now state our first result:

\begin{theo}\label{RP2emb1}
There is an exotic pair of embeddings of $\R P^2$'s into a simply-connected closed smooth 4-manifold which are equivalent. 
\end{theo}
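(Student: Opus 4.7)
The plan is to produce the exotic pair by combining a non-effective cork embedding with the real families version of Donaldson's diagonalization theorem announced in the abstract. First, I would fix a cork $(C,\tau)$ together with an embedding of $C$ into a closed simply-connected smooth 4-manifold $Y$ such that the cork twist along $\tau$ does \emph{not} change the diffeomorphism type of $Y$; equivalently, $\tau$ extends to a self-diffeomorphism $F\colon Y\to Y$. Stabilizing by $\CP^2$ and placing a standard $\RP^2 \subset \CP^2$ (the fixed set of complex conjugation) in an appropriate location of $X := Y\# \CP^2$, I would set $\Sigma_0$ to be this $\RP^2$ and $\Sigma_1 := \widetilde F(\Sigma_0)$, where $\widetilde F$ is an extension of $F$ to a self-diffeomorphism of $X$. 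The connect-sum region should be arranged near the cork so that $\widetilde F$ is not the identity in a neighborhood of $\Sigma_0$. By construction, $\Sigma_0$ and $\Sigma_1$ are equivalent via $\widetilde F$ in the sense of \Cref{defi:stro_equi_closed}, and $X$ is closed and simply connected.

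Next I would verify topological isotopy. By Freedman's theorem, the cork diffeomorphism $F$ is topologically isotopic to the identity on $Y$, and the induced topological isotopy carries $\Sigma_0$ to $\Sigma_1$. Together with the equivalence provided by $\widetilde F$, this places the pair in the setting where only a smooth obstruction remains to be produced.

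The heart of the proof is to obstruct a smooth isotopy from $\Sigma_0$ to $\Sigma_1$. Suppose $\{\Sigma_t\}_{t\in[0,1]}$ is such a smooth isotopy. Concatenating it with the reversed path $\{\widetilde F^{-1}(\Sigma_{1-t})\}$ produces an $S^1$-family of smooth embeddings $\RP^2\hookrightarrow X$. Taking fiberwise double branched covers along this family yields an $S^1$-family of closed smooth 4-manifolds carrying a smooth $\mathbb{Z}/2$-action (the deck involution) whose fixed set is the branched locus; this is precisely the setting of real family Seiberg--Witten theory. Applying the real families version of Donaldson's diagonalization to this family forces the negative-definite part of the equivariant intersection form of the branched double cover $\widetilde{X}$ to admit a continuous family of diagonalizing bases along the loop. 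The monodromy of this family, determined by $\widetilde F$ together with the branched-cover construction, is then shown to violate the constraint, yielding the contradiction.

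The main obstacle is to arrange three compatible conditions simultaneously: (i) a non-effective cork embedding $(C,\tau)\hookrightarrow Y$ so that the extension $\widetilde F$ and the equivalence exist; (ii) a stabilization and placement of $\Sigma_0 \subset X$ for which the double branched cover $\widetilde{X}$ has an intersection form amenable to the real family diagonalization theorem; and (iii) a verification that the monodromy of the equivariant intersection form around the loop built from $\widetilde F$ genuinely obstructs a continuous equivariant diagonalization. The subtlety is that $\widetilde F$ is smoothly isotopic to the identity on $X$ (so no invariant of $X$ alone can detect the difference) and, as the real Seiberg--Witten invariant of Miyazawa is a 4-manifold--surface invariant, it cannot distinguish the equivalent pairs $(X,\Sigma_0)$ and $(X,\Sigma_1)$ either. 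The family obstruction on the branched cover, however, can still be nontrivial, and identifying a concrete cork and ambient manifold where the computation succeeds is the principal technical challenge of the proof.
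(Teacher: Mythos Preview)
Your outline captures the broad architecture, but it has two concrete gaps that prevent it from going through as written.

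First, the surface you choose is not usable. The fixed-point set of complex conjugation on $\CP^2$ represents the nonzero class in $H_2(\CP^2;\Z/2)$, so the branched double cover of $X$ along $\Sigma_0$ does not exist and the real Seiberg--Witten machinery cannot even be set up. The paper instead uses the pairwise connected sum $(2\CP^1 \subset \CP^2)\#(\RP^2_+\subset S^4)$, an $\RP^2$ whose class is $2[\CP^1]\equiv 0 \pmod 2$; this is an \emph{admissible} surface in the sense that its branched cover satisfies $b^+(\Sigma_2(S))=b^+(W)$ and carries a real spin$^c$ structure with $c_1^2-\sigma=0$.

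Second, and more fundamentally, you treat the non-effective cork embedding as a free parameter and defer ``identifying a concrete cork and ambient manifold where the computation succeeds'' to an unspecified search. But the obstruction does not come from the cork alone; it comes from a very specific filling. The closed manifold is $X = Z\cup_Y C\#\CP^2$, where $(Z,f_Z,\fraks_Z)$ is a \emph{filling as $S^1$-family} of the cork boundary: $b^+(Z)=1$, $f_Z$ extends $\tau$, $f_Z$ reverses the orientation of $H^+(Z)$, $f_Z^\ast\fraks_Z\cong\fraks_Z$, and crucially $c_1(\fraks_Z)^2-\sigma(Z)>0$. The diffeomorphism used is $f_Z\cup\tilde f$, not an arbitrary extension $\widetilde F$; in particular it is \emph{not} homologically trivial on all of $X$ (it reverses $H^+(Z)$), contrary to your claim that $\widetilde F$ is smoothly isotopic to the identity. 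The obstruction argument then runs as follows: if the two $\RP^2$'s were smoothly isotopic, one could modify $\tilde f$ to be the identity near $S$, lift it to the branched cover $\Sigma_2(S;X)\cong (Z\cup C)\#\Sigma_2(S;\CP^2)\#(Z\cup C)$, and observe that the lift acts as $-1$ on the one-dimensional $(-\tau^\ast)$-invariant part of $H^+$. The $S^1$-family real diagonalization theorem then forces $c_1^2-\sigma\le 0$ for the glued spin$^c$ structure, contradicting $2(c_1(\fraks_Z)^2-\sigma(Z))>0$. Your ``loop of surfaces followed by fiberwise branched covers'' is not how the family is built, and your concatenation $\Sigma_t$ followed by $\widetilde F^{-1}(\Sigma_{1-t})$ does not close up into a loop in any case.
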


As discussed earlier, this constitutes the first example of a non-orientable exotically knotted surface with diffeomorphic complements, embedded in a 4-manifold. Let us now furnish a quick comparison of the tools we used with the existing ones in the literature:

In order prove Theorem~\ref{RP2emb1}, we use a generalization of considering a variation of Donaldson's diagonalization for manifold with boundary by Fr{\o}yshov \cite{Fr96}, in the context of \textit{real} family Seiberg--Witten theory. It can be regarded as a family version of \cite[Theorem 1.1]{KMT24i}. Of course, any method in the literature that only obstructs the existence of equivalent surfaces is also not suited of deducing Theorem~\ref{RP2emb1}.
In addition, the smallest such 4-manifold $X$ from Theorem~\ref{RP2emb1} has intersection form of $2(+1) \oplus 11 (-1)$. 

\begin{rem}
All known invariants of 4-manifold-surface pairs cannot detect equivalent exotic $\R P^2$ embeddings since the invariants are obviously unchanged under equivalence by definition. In particular, any invariant from real Seiberg--Witten theory including Miyazawa's invariant \cite[Theorem 1.8]{Mi23} and $O(2)$--equivariant Bauer--Furuta invariant \cite{BH24} cannot detect this exotic pair. 
\end{rem}

\subsubsection{\bf{Using corks but not exotic smooth structures}}
Let us now draw readers' attention to a particularly interesting geometric aspect of the proof of Theorem~\ref{RP2emb1}. To put this into context, we first recall a rather well-known connection between corks and exotic (non-equivalent) closed surface in a closed 4-manifold, known to the experts. This is due to \cite{AKHMR15} (see also \cite{akbulut2014isotoping}).

Let $C$ be a cork, that is $C$ a contractible manifold with boundary. $\partial C$ is further equipped with a diffeomorphism $\tau$ such that $\tau$ does \textit{not} extend over $C$ as a diffeomorphism (see Section~\ref{section:Preliminaries} for details). Suppose that there is an embedding of $C$ in a closed 4-manifold $X$. Now let us call the 4-manifold obtained from regluing $C$ using $\tau$ in $X$ by $X^{\tau}$ (which may not be diffeomorphic to $X$). Let us record the following definition, which we will repeatedly use:
\begin{defi}
\label{defi: weakly effective cork}
We call a pair $(C,\tau)$ an \textit{effective simple cork} of a 4-manifold $X$ if the following conditions are satisfied:
\begin{itemize}
    \item [(i)] Simple--The cork-twist dissolves after 1-stabilization: $\tau$ extends to $C\#S^2\times S^2$ (or $C \# \mathbb{C}P^2$) as a diffeomorphism
    \[
    \tilde{\tau}: C \# S^2\times S^2 \rightarrow C \# S^2 \times S^2,\; \tilde{\tau}|_\partial=\tau.
    \]
    \item [(ii)] Effective--There is an embedding of $C$ to $X$ so that regluing $C$ by the boundary twist changes the diffeomorphism type of $X$.
\end{itemize}
On the other hand, if a simple cork $(C,\tau)$ embeds into a 4-manifold $X$, where the cork twist does not change the diffeomorphism type of $X$, we call it a \textit{simple, non-effective cork}\footnote{This definition depends on the choice of the embedding of the cork, but we suppress it.} of $X$.
\end{defi} 
The definition of simple corks requires that we specify what we are stabilizing $C$ by ($S^2 \times S^2$ or $\C P^2$), but we will omit making it explicit as it will clear from the context.

In \cite{AKHMR15}, the authors considered an effective simple cork $(C,\tau)$ in a 4-manifold. Then they showed that there is pair of spheres embedded in $C \# \mathbb{C}P^2$ that are related by $\tilde{\tau}$ which constitute an exotic pair of (non-equivalent) surface in $X$. In order to achieve this, they showed that blowing down spheres (they are of self-intersection $(+1)$) changes $X \# \mathbb{C}P^2$ to $X$, and $X^{\tau}$ respectively. Hence the spheres could not have been related by a diffeomorphism. See Figure~\ref{fig: intro_cork_blowdown} for a schematic description
We make one more definition regarding the this procedure.
\begin{defi}\label{defi: related_surface}
$(C,\tau)$ be a simple cork. We say that a pair of closed surfaces $(S_1, S_2)$ are \textit{associated to} $(C, \tau)$ if they are embedded in $C \# S^2 \times S^2$ (or $C \# \mathbb{C}P^2$) and $\tilde{\tau}(S_1)=S_2$.
\end{defi}
In light of the above Definition ~\ref{defi: related_surface}, the discussion above (from \cite{AKHMR15}) shows the existence of a pair of exotic surfaces that are associated to a simple effective cork $(C,\tau)$ for $X$. Indeed, in their argument, it was crucial that the associated cork has an effective embedding in $X$. Now consider the situation where there is an non-effective embedding of a simple cork $C$ is a closed 4-manifold $X$. It is now quite unclear how to produce exotic surfaces associated with the embedding of $(C,\tau)$ in $X$, since there is no underlying exotic smooth structure of $X$ to exploit. In our proof of Theorem~\ref{RP2emb1}, the (equivalent) exotic surfaces that we construct are exactly of this type.

\begin{figure}[h!]
\center
\includegraphics[scale=0.6]{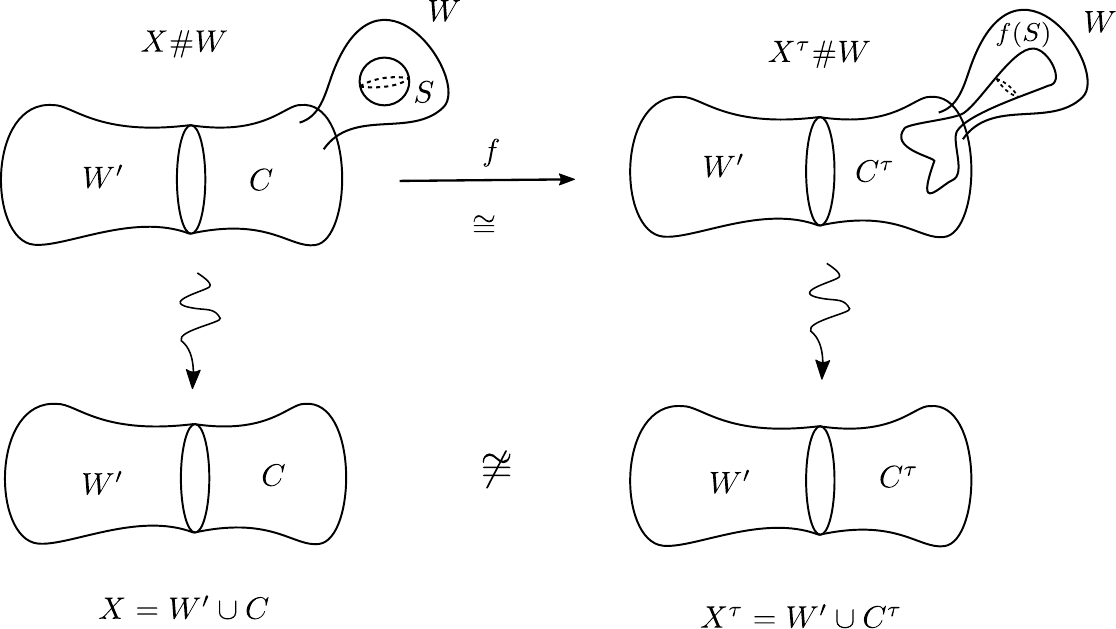}
\caption{\raggedright A schematic: producing exotic surfaces using the exotic smooth structures. Here $W$ is either $\mathbb{C}P^2$ or $S^2 \times S^2$, $f$ is a diffeomorphism of the stabilized manifolds and the wiggly arrows indicate blowing-down or surgery operation. It is however unclear how to produce exotic surfaces when $X$ and $X^{\tau}$ are diffeomorphic, using a similar strategy. Contrast with Theorem~\ref{intro: thm RP2 local}.}\label{fig: intro_cork_blowdown}
\end{figure}

\begin{theo}\label{intro: thm RP2 local}
There exist a pair of exotic but equivalent $\mathbb{R}P^2$, embedded in $X \# \mathbb{C}P^2$, associated to a simple non-effective cork of a closed simply-connected 4-manifold $X$.
\end{theo}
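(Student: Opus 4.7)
The plan is to build the exotic pair inside the $\mathbb{C}P^2$ summand of $X \# \mathbb{C}P^2$ and then transport one copy by the cork-stabilization diffeomorphism $\tilde\tau$. Arrange the connected sum so that $C \# \mathbb{C}P^2 \subset X \# \mathbb{C}P^2$ with the $\mathbb{C}P^2$ factor attached inside $C$. Let $S_1 \subset \mathbb{C}P^2 \subset C\#\mathbb{C}P^2$ be the standard $\mathbb{R}P^2$ (the fixed locus of complex conjugation on $\mathbb{C}P^2$), and set $S_2 := \tilde\tau(S_1)$. Since $\tilde\tau|_{\partial(C\#\mathbb{C}P^2)} = \tau$, extending $\tilde\tau$ by the identity on $X \setminus \mathrm{int}(C)$ yields a diffeomorphism $\Psi: X \# \mathbb{C}P^2 \to X^\tau \# \mathbb{C}P^2$. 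Non-effectiveness of the cork supplies a diffeomorphism $\Phi: X^\tau \cong X$, and the composition $(\Phi \# \mathrm{id}) \circ \Psi$ is a self-diffeomorphism of $X \# \mathbb{C}P^2$ taking $S_1$ to $S_2$; thus the pair is equivalent. Topological isotopy follows from the standard classification of topologically embedded $\mathbb{R}P^2$'s in simply-connected closed $4$-manifolds, since $S_1$ and $S_2$ share the same normal Euler number and the same mod-$2$ homology class by construction.

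The heart of the argument is to rule out a smooth isotopy between $S_1$ and $S_2$. Suppose for contradiction such an isotopy exists; splicing it with $\tilde\tau$ produces a non-trivial $1$-parameter family of smooth embeddings $\mathbb{R}P^2 \hookrightarrow X \# \mathbb{C}P^2$ whose monodromy outside the cork agrees with the extension of $\tau$ by the identity. The complex-conjugation involution on $\mathbb{C}P^2$ fixing $S_1$ endows the surface with a real structure, and pulling this back along the family upgrades the loop to a $1$-parameter family of real $\mathrm{spin}^c$ $4$-manifolds. Applying the real and families version of the Fr{\o}yshov--Donaldson diagonalization theorem---the tool developed to prove Theorem~\ref{RP2emb1}, itself a parametrized enhancement of \cite[Theorem 1.1]{KMT24i}---then produces a diagonalization inequality whose correction term is a family real Fr{\o}yshov-type invariant of $(\partial C, \tau)$. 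A direct computation on the cork boundary shows this term is non-zero, contradicting the inequality and ruling out the isotopy.

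The main obstacle is arranging the real family diagonalization so that the obstruction is genuinely localized at the cork boundary, sensitive to the cork involution $\tau$ rather than to the global invariants of $X \# \mathbb{C}P^2$ (which are indifferent to non-effective cork twists by hypothesis). This is precisely where the parametrized extension earns its keep: it promotes $\tau$ from a boundary diffeomorphism invisible to closed-manifold invariants to a non-trivial class of loops in $\mathrm{Diff}(\partial C)$ whose obstruction does survive into the closed $4$-manifold. Once this local sensitivity is in place, the $(+1)$-contribution of the $\mathbb{C}P^2$ summand feeds into the diagonalization bound and violates it, completing the proof of Theorem~\ref{intro: thm RP2 local}.
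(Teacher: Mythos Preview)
Your overall architecture (build the pair inside $C\#\C P^2$, transport by $\tilde\tau$, use Orson--Powell for the topological isotopy, and apply a real families diagonalization for the smooth obstruction) matches the paper's, but the smooth obstruction step has two concrete problems.

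\textbf{The surface is not admissible.} The real locus $\R P^2\subset\C P^2$ has normal bundle isomorphic to $T\R P^2$, hence normal Euler number $\chi(\R P^2)=1$; in particular its mod~2 homology class is the generator of $H_2(\C P^2;\Z_2)$, so no double branched cover of $\C P^2$ along it exists and the real Seiberg--Witten machinery (which lives on the branched cover) simply does not apply. The paper instead takes the pairwise connected sum
\[
(2\,\C P^1\subset\C P^2)\ \#\ (\R P^2_+\subset S^4),
\]
which is an $\R P^2$ in $\C P^2$ with $[S]\equiv 0\pmod 2$, branched cover $S^2\times S^2\#\overline{\C P}^2$, and $c_1(\fraks_R)^2-\sigma(\Sigma_2(S))=0$; these are exactly the admissibility conditions needed to feed into \cref{main real}.

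\textbf{The contradiction comes from the filling, not from $\C P^2$.} You write that ``the $(+1)$-contribution of the $\C P^2$ summand feeds into the diagonalization bound and violates it''. In the paper's computation the admissible-surface piece contributes \emph{zero} (by design, $c_1(\fraks_R)^2-\sigma=0$), and the strict positivity comes from the $S^1$-family filling $(Z,f_Z,\fraks_Z)$ of the cork boundary, namely $c_1(\fraks_Z)^2-\sigma(Z)>0$. The closed manifold $X$ in the theorem is precisely $Z\cup C$, not an arbitrary $X$ with a non-effective cork; the argument does not localize to a ``Fr{\o}yshov-type invariant of $(\partial C,\tau)$'' but is run directly on the closed branched double cover $\Sigma_2(S;Z\cup C\#\C P^2)$ via \cref{thm: real S1}. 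Your proposal, as written, neither specifies this filling nor explains why the inequality is violated.
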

\noindent
In Theorem~\ref{intro: thm RP2 local}, the 4-manifold can be taken to be homeomorphic to $\C P^2 \# 11 \overline{\C P}^2$ and the cork can be taken as the Akbulut cork. The topological isotopy in Theorem~\ref{intro: thm RP2 local} is aided by the result of Orson and Powell \cite{OP22}.

\begin{rem}
    In the proof of Theorem~\ref{intro: thm RP2 local}, the contribution of the cork towards the exotic surface detection comes from the interplay of the cork-twist extension and other diffeomorphisms. In particular, this unveils a new  approach through which corks can be useful in pursuing exotica. As far as the authors are aware, in all the previous instances in the literature where corks has been useful in detecting an exotic phenomena (other than the exotic smooth structure), such as the exotic surface  (as described above), or exotic diffeomorphism \cite{konno2024localizing} detection, it was crucial that cork-twist changed the smooth structure of a certain closed 4-manifold used in the construction.
\end{rem}  

\subsubsection{\bf{Using 2-parameter family Seiberg--Witten theory and Seidel Dehn twists}}
While the examples in the previous subsection consist of non-orientable surfaces, we also produce exotic equivalent spheres, while still using non-effective embeddings of corks.

\begin{theo}\label{intro: s1_family_surface_closed}
There exists a simply-connected closed smooth 4-manifold $X$ containing a pair of equivalent exotic spheres representing the homology class
    \[
    [S^2\times \{*\} ] + [\{* \} \times S^2 ] \in H_2(X \# S^2\times S^2)
    \]
associated to a simple non-effective cork embedded in $X \# S^2 \times S^2$.
\end{theo}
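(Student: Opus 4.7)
The argument parallels that of \cref{intro: thm RP2 local}, with two key substitutions: the $1$-parameter real family Seiberg--Witten diagonalization is replaced by its $2$-parameter (ordinary) analogue, and the role played there by complex conjugation symmetry is taken over by a Seidel Dehn twist along an embedded sphere. Fix a simple cork $(C,\tau)$ (for instance the Akbulut cork) and a non-effective embedding $C \hookrightarrow X$ into a simply-connected closed $4$-manifold, chosen as in \cref{intro: thm RP2 local} so that the extension of $\tau$ over $C\# S^2\times S^2$ yields a diffeomorphism $\Phi$ of $Y := X \# S^2\times S^2$ whose action on $H_2(S^2\times S^2;\Z)$ is the hyperbolic reflection swapping $[S^2\times\{*\}]$ and $[\{*\}\times S^2]$.

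Take $S_1 \subset S^2\times S^2\subset Y$ to be a standardly embedded $2$-sphere representing $\alpha := [S^2\times\{*\}] + [\{*\}\times S^2]$, chosen so that it carries a Seidel Dehn twist $\delta$ (for example, an antidiagonal Lagrangian sphere in the symplectic $S^2\times S^2$). Set $S_2 := \Phi(S_1)$; the two spheres are equivalent via $\Phi$ by construction. Because $\Phi_* \alpha = \alpha$, the homology classes $[S_1]$ and $[S_2]$ coincide, so by standard topological isotopy results for homologous $2$-spheres in a simply-connected closed $4$-manifold we obtain $S_1 \simeq_{\mathrm{top}} S_2$.

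For the smooth obstruction, suppose, for contradiction, that $S_1 \simeq S_2$ smoothly. Ambient isotopy extension supplies $\psi \in \Diff(Y)$, smoothly isotopic to the identity, with $\psi(S_1) = S_2$, so that $f := \Phi \psi^{-1}$ fixes $S_2$ setwise. Combining the isotopy $\{\psi_t\}$ with the circle of diffeomorphisms $\{\delta_s\}_{s \in S^1}$ arising from the Hamiltonian circle action that defines the Seidel twist $\delta$ produces a continuous map from a closed $2$-dimensional parameter space $P$ into $\Diff(Y)$, and hence a $Y$-bundle $\mathcal{Y} \to P$. For a suitably chosen family spin$^c$ structure on $\mathcal{Y}$, a $2$-parameter family Donaldson-type diagonalization (a $2$-parameter analogue of \cite[Thm 1.1]{KMT24i}) yields a constraint that is violated, delivering the desired contradiction.

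The principal technical obstacle is twofold: (i) assembling the $2$-parameter family from the smooth isotopy and the Seidel circle action so that the resulting $Y$-bundle carries a nonzero family Seiberg--Witten obstruction, and (ii) establishing a $2$-parameter Donaldson-type inequality sharp enough to rule out the associated family characteristic class. Both force constraints on the choice of $(C,\tau)$ and $X$; the interaction of $\Phi_*$ with the cohomology class detecting the Seidel Dehn twist is the heart of the computation.
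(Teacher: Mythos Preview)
Your overall plan identifies the right two ingredients---the Seidel Dehn twist and a $2$-parameter families obstruction---but the mechanism you propose for building the $2$-parameter family does not work, and this is where the paper's argument genuinely differs.

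The paper's key step is a \emph{commutator trick}. Write $\Delta$ for the diagonal $(+2)$-sphere in $S^2\times S^2$ and $T_\Delta$ for the Dehn twist about it. If $\Delta$ and $\Phi(\Delta)$ were smoothly isotopic, then the Dehn twists $T_\Delta$ and $T_{\Phi(\Delta)}$ would be smoothly isotopic; since $T_{\Phi(\Delta)} = \Phi\, T_\Delta\, \Phi^{-1}$, this forces $[T_\Delta,\Phi]\simeq \id$. That commutator condition is exactly the hypothesis of \cref{thm: rel T2} (the $T^2$ family diagonalization), and the contradiction comes from checking that $T_\Delta$ and $\Phi$ act on $H^+(X\# S^2\times S^2)\cong \R^2$ as $\diag(1,-1)$ and $\diag(-1,1)$ respectively, both preserving a suitable spin$^c$ structure with $c_1(\fraks)^2-\sigma>0$. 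Your route never reaches this structure: the ``circle of diffeomorphisms $\{\delta_s\}$ from the Hamiltonian circle action defining the Seidel twist'' is not a loop in $\Diff(Y)$ whose monodromy is $T_\Delta$ in any way that produces a closed $2$-parameter family, and combining it with the path $\{\psi_t\}$ yields at best a map from a square, not a bundle over a closed surface to which a families diagonalization applies.

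Two smaller points. First, the description of $\Phi_*$ as the swap of the two sphere factors is not what is needed and is not how $\Phi$ is built: the crucial $-1$ on $H^+$ comes from the filling side (the $W_0$ piece reverses $H^+$), while $T_\Delta$ supplies the $-1$ on the $S^2\times S^2$ side. Second, the sphere in class $[S^2\times\{*\}]+[\{*\}\times S^2]$ is the \emph{diagonal} (self-intersection $+2$), not the antidiagonal Lagrangian; the Dehn twist still makes sense for a $(+2)$-sphere, but your identification should be corrected.
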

The minimal examples of 4-manifolds $X$ that appear in \cref{intro: s1_family_surface_closed} are homeomorphic to $\C P^2 \# 11 \overline{\C P}^2$, and the associated cork here again can be taken to be the Akbulut cork. Note that such an exotic sphere cannot be detected using the families adjunction inequality \cite{Ba20}, because of $b^+(X)<3$ so that the family Seiberg-Witten invariant of a diffeomorphism is not well-defined. In order to prove Theorem~\ref{intro: s1_family_surface_closed}, we use 2-parameter family Seiberg--Witten theory obstructions developed by the authors in \cite{KMT23}, which again can be thought of as a generalization of the Donaldson's diagonalization theorem in an appropriate context. 

Another geometric aspect involved in the proof of Theorem~\ref{intro: s1_family_surface_closed} is the \textit{Seidel Dehn twists} (which is also sometimes referred to as the {\it reflection on sphere} in the literature). Given a smoothly embedded $(-2)$-sphere $S$ in an oriented 4-manifold $X$, one can get a diffeomorphism $T_S : X \to X$ which is supported in a neighborhood of $S$ and acts as the antipodal map on $S$, which is called the {\it Dehn twist about $S$} (see the proof of Theorem~\ref{intro: s1_family_surface_closed} for details). When $X$ is symplectic and $S$ is Lagrangian, the Dehn twist about $S$ produces a symplectomorphism and has been extensively studied after Seidel's thesis \cite{Sei08}.

Let us now clarify the part played by corks in both Theorem~\ref{intro: s1_family_surface_closed} and \ref{intro: thm RP2 local}. In fact, this will also speak towards the user-friendly nature of the obstruction. 

\begin{theo}\label{intro: main_lemma}
Let $(C , \tau)$ be a simple cork. Suppose that $\partial C$ bounds a smooth compact spin$^c$ 4-manifold $(X, \mathfrak{s})$ with $b^+(X)=1$ and $b_1(X)=0$. 
Now if $\Phi$ is any orientation-preserving, smooth extension of $\tau$ so that $\Phi$ is $H^{+}$-reversing, and $\spinc$-preserving i.e. $f^{*}(\s)=\s$ and we have,
\[
c_1(\fraks)^2 > \sigma(X),
\]
then there is a pair of exotic equivalent $\mathbb{R}P^2$ (or $S^2$) associated to a non-effective\footnote{By construction $C$ is a non-effective cork in $X \cup C$.} embedding of the cork $C$ in $X \cup C \# \mathbb{C}P^2 $ (or in $X \cup C \# S^2 \times S^2$). In fact, this pair of exotic $\mathbb{R}P^2$ (or $S^2$) are weakly equivalent in $C \# \mathbb{C}P^2$ (or in $C \# S^2 \times S^2$). 
\end{theo}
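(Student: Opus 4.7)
The plan is to construct $S_1, S_2$ explicitly inside the connected-sum summand $W$, obtain their equivalence directly from the given extensions $\widetilde{\tau}$ and $\Phi$, and then obstruct smooth isotopy by feeding a hypothetical isotopy into the family Donaldson diagonalization theorems already established earlier in the paper.

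For the construction, let $W=\mathbb{C}P^{2}$ in the $\mathbb{R}P^{2}$ case and $W=S^{2}\times S^{2}$ in the $S^{2}$ case. Take $S_1$ to be a standard linearly embedded $\mathbb{R}P^{2}\subset \mathbb{C}P^{2}$ (respectively a smoothly embedded sphere in the class $[S^{2}\times\{\ast\}]+[\{\ast\}\times S^{2}]$), pushed into $W\setminus B^{4}$ so that $S_1\subset C\#W$ is disjoint from the connected-sum neck, and set $S_2:=\widetilde{\tau}(S_1)$. Since $\Phi|_{\partial X}=\tau=\widetilde{\tau}|_{\partial C}$, the two diffeomorphisms glue to a self-diffeomorphism $F:=\Phi\cup\widetilde{\tau}$ of $N:=(X\cup C)\#W$ with $F(S_1)=S_2$, proving equivalence in $N$; the restriction $\widetilde{\tau}$ itself provides the weak equivalence in $C\#W$. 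For topological isotopy, since $(C,\tau)$ is a simple cork, $\widetilde{\tau}$ is topologically isotopic to the identity by Freedman's theorem, so $S_1$ and $S_2$ carry matching topological normal data, and the non-orientable topological isotopy theorem of Orson--Powell \cite{OP22} (respectively the topological light-bulb theorem for spheres) upgrades this to a topological isotopy $S_1\simeq S_2$ in $N$.

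The main step is smooth non-isotopy. Suppose for contradiction that $S_1$ and $S_2$ are smoothly isotopic in $N$. By isotopy extension there is $\varphi\in \Diff(N)$, smoothly isotopic to $\mathrm{id}_N$, with $\varphi(S_1)=S_2$. Concatenating a smooth isotopy $\varphi\simeq \mathrm{id}_N$ with the constant path at $F$ produces a loop $\gamma\subset\Diff(N)$ based at $F$ whose action on $S_1$ closes up to the identity of the ambient manifold. Because $S_1\subset W$ is disjoint from $X$, the loop $\gamma$ restricts over $X\subset N$ to a loop based at $\Phi$ covered by $\spinc$-equivariant data on $(X,\mathfrak{s})$. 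In the $\mathbb{R}P^{2}$ case, passing to the double cover of a tubular neighborhood of $S_1$ in $N$ branched along $S_1$ converts $\gamma$ into a real (i.e.\ $\mathbb{Z}/2$-equivariant) $1$-parameter family of diffeomorphisms; feeding this into the family version of the real Seiberg--Witten diagonalization theorem \cite[Theorem~1.1]{KMT24i} established earlier in the paper yields an inequality of the form $c_1(\mathfrak{s})^{2}\le\sigma(X)$. In the $S^{2}$ case, one pairs $\gamma$ with an independent loop in $\Diff(C\#S^{2}\times S^{2})$ generated by a Seidel/Dehn twist about $S_1$, producing a genuine $2$-parameter family of diffeomorphisms of $N$; the $2$-parameter family diagonalization of \cite{KMT23} then again forces $c_1(\mathfrak{s})^{2}\le\sigma(X)$. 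Either conclusion contradicts the standing hypothesis $c_1(\mathfrak{s})^{2}>\sigma(X)$.

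I expect the main obstacle will be verifying that the family correction term in the families diagonalization inequality actually vanishes, i.e.\ that the loop of data assembled from $(\Phi,\widetilde{\tau},\varphi)$ really does produce a family over $X$ for which the diagonalization theorem reduces to the unperturbed inequality $c_1(\mathfrak{s})^{2}\le\sigma(X)$. This is really a compatibility check, but it is where the $H^{+}$-reversing and $\spinc$-preserving hypotheses on $\Phi$ play the decisive role, and where the choice to place $S_1$ inside the $W$-summand (disjoint from $X$) is crucial in keeping all the relevant family data supported on $X$.
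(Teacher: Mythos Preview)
Your high-level strategy matches the paper's: build the surfaces in the $W$-summand, take the glued diffeomorphism $F=\Phi\cup\widetilde\tau$, and derive a contradiction from a hypothetical smooth isotopy via a family diagonalization inequality. However, the bridge from ``$S_1\simeq S_2$ smoothly'' to the diagonalization input has genuine gaps. In the $\R P^2$ case, you write ``passing to the double cover of a tubular neighborhood of $S_1$'': the relevant object is the branched double cover $\Sigma_2(S_1;N)$ of the \emph{entire closed manifold} $N=X\cup C\#W$, not of a tubular neighborhood. This cover decomposes as $(X\cup C)\#\Sigma_2(S_1;W)\#(X\cup C)$, and it is precisely this decomposition that makes the $H^+$-reversing hypothesis on $\Phi$ visible in the $(-\tau^*)$-eigenspace of $H^+(\Sigma_2)$. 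Your claim that ``$\gamma$ restricts over $X\subset N$ to a loop based at $\Phi$'' is not how the argument works and is not obviously justified: after composing $F$ with the ambient isotopy $\varphi^{-1}$, the resulting diffeomorphism need not preserve the piece $X$; what the paper uses instead is that once the modified diffeomorphism fixes $S_1$ pointwise it \emph{lifts} to $\Sigma_2(S_1;N)$, and then one reads off its action on $H^+(\Sigma_2)^{-\tau^*}\cong\R$ from the homological decomposition above. The final inequality also comes out as $c_1(\fraks_R)^2-\sigma(\Sigma_2)\le 0$ on the cover; you then need the admissibility of $S_1$ plus the decomposition to reduce this to $c_1(\fraks)^2-\sigma(X)\le 0$, a step your proposal omits. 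Relatedly, the paper's surface in $\C P^2$ is the admissible connected sum $(2\C P^1\subset\C P^2)\#(\R P^2_+\subset S^4)$; for a bare linearly embedded $\R P^2$ you would have to verify admissibility separately.

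For the $S^2$ case your Dehn-twist idea is the right one, but ``pairing $\gamma$ with a Dehn-twist loop to get a $2$-parameter family'' is too vague. The precise mechanism (as in the paper's proof of \cref{intro: s1_family_surface_closed}) is: a smooth isotopy $S_1\simeq S_2=F(S_1)$ forces $T_{S_1}\simeq T_{F(S_1)}=F\,T_{S_1}\,F^{-1}$, hence the commutator $[F,T_{S_1}]$ is smoothly isotopic to $\id$; since $T_{S_1}$ and $F$ act on $H^+(N)\cong\R^2$ as $\diag(1,-1)$ and $\diag(-1,1)$ respectively and both preserve the chosen $\spinc$ structure, \cref{thm: rel T2} applies directly to give $c_1(\fraks)^2-\sigma(X)\le 0$. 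Your ``loop $\gamma$ based at $F$'' language obscures this and, as written (concatenating a path $\varphi\simeq\id$ with a constant path at $F$), does not actually produce a loop.
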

Here, by $H^{+}$-reversing, we mean that $\Phi$ changes the homology orientation of $X$, (see  Section~\ref{section:Preliminaries} for details). Theorem~\ref{intro: main_lemma} is the main ingredient that is behind our construction. Note that in this case, in order to show the existence of the exotic surfaces, we only needed to compute the intersection form of the manifold $X$. In \cite{KMT23first}, the authors gave infinitely many examples of simple corks that satisfy the hypothesis of Theorem~\ref{intro: main_lemma}. For reference, we refer to simple corks that satisfy the hypothesis of Theorem~\ref{intro: main_lemma} as \textit{corks detected by the $\spinc$-preserving family Seiberg--Witten theory}, see Section~\ref{section:Preliminaries}.

\subsection{Strongly equivalent exotic surfaces in once stabilized contractible manifolds}
As discussed in the introduction, the smallest closed 4-manifold supporting a pair of an exotic equivalent surface was embedded in a 4-manifold with the second Betti number $25$ \cite{Ba20}. For manifold with boundary,  Iida, Mukherjee, the first and the third authors \cite[Remark 6.5]{IKMT22} provided an equivalent exotic pair of embedded spheres in a compact 4-manifold with boundary, whose second Betti number is $5$. In this article, we provide equivalent exotic spheres in a compact 4-manifold with boundary, whose second Betti number is $1$ and $2$. To express our results clearly, we will need one more definition:
\begin{defi}\label{defi:sw_eff_cork}
We say that a cork $(C,\tau)$ is \textit{SW-effective} if there exists a 4-manifold $X$, so that there is an (SW) effective embedding of the cork in the following sense: There is a closed smooth oriented simply-connected 4-manifold $X$ with $b^+(X)\geq2$, a spin$^c$ structure $\fraks$ on $X$ of $d(\fraks)=0$\footnote{Here $d(\fraks)$ represents the formal dimension of the Seiberg--Witten moduli space}, and there is a smooth embedding $i : C \hookrightarrow X$ such that the Seiberg--Witten invariants $\SW(X,\fraks)$ and $\SW(X^\tau, \fraks)$ are distinct in $\Z/2$, where $X^\tau$ is the cork-twisted copy of $X$ along $(C,\tau)$ via $i$.
\end{defi}

Note that begin SW-effective is a different notion than being effective (Definition~\ref{defi: weakly effective cork}). We prove the following result, which roughly shows that every non-zero and non-negative self-intersection homology class of $S^2 \times S^2$ admits infinitely many (pairwise) equivalent but exotic sphere representatives, in once stabilized corks.

\begin{theo}\label{theo: intro sphere from adjunction}
Let $C$ be a simple SW-effective cork and let $\alpha \in H_2(C\#S^2\times S^2;\Z) \setminus \{0\}$ be any homology class which has non-ngative self-intersection. Then there exist infinitely many oriented closed smoothly embedded surfaces $\{S_i\}_{i=1}^\infty$ in $C\#S^2\times S^2$ with $\alpha = [S_i]$ that are strongly equivalent but relatively exotic to each other.
\end{theo}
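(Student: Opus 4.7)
The plan is to exploit the SW-effectiveness of $(C,\tau)$ to produce a boundary-fixing diffeomorphism $\phi$ of $C\#S^2\times S^2$ that is topologically isotopic to the identity rel~$\partial$ but represents an infinite-order element in the smooth mapping class group rel~$\partial$ (modulo isotopy), and then to set $S_i := \phi^i(S_0)$ for a fixed smoothly embedded representative $S_0$ of $\alpha$.

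For the construction of $\phi$, let $i\colon C\hookrightarrow X$ and $\fraks \in \mathrm{Spin}^c(X)$ be the data witnessing SW-effectiveness, so that $\SW(X,\fraks)\neq \SW(X^\tau,\fraks)$. Simpleness supplies the extension $\tilde\tau$ of $\tau$ to $C\#S^2\times S^2$ together with an abstract diffeomorphism $X\#S^2\times S^2 \cong X^\tau\#S^2\times S^2$; composing these yields a self-diffeomorphism $\Psi$ of $X\#S^2\times S^2$ supported near the stabilized cork. A collar isotopy arranges $\phi := \Psi|_{C\#S^2\times S^2}$ to restrict to $\id_{\partial C}$. Topologically, $\Psi$ (and hence $\phi$ rel~$\partial$) is isotopic to the identity by standard topological techniques for simply-connected 4-manifolds. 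Set $S_i := \phi^i(S_0)$, where $S_0$ is any smoothly embedded closed representative of $\alpha$; strong equivalence $S_i \sim S_j$ holds via $\phi^{i-j}$, topological isotopy rel~$\partial$ holds by topological triviality, and $[S_i]=\alpha$ since topological isotopies preserve homology.

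The decisive (and hardest) step is pairwise smooth non-isotopy rel~$\partial$. Suppose for contradiction that $S_0$ and $S_n$ are smoothly isotopic rel~$\partial$ via an ambient isotopy $\{I_t\}$. Then $\phi^{-n}\circ I_t$ is a path in $\Diff(C\#S^2\times S^2,\partial)$ from $\phi^{-n}$ to a diffeomorphism fixing $S_0$; extending by the identity over $X\setminus\Int(C)$ yields, after a standard mapping-torus construction, a $1$-parameter family of $(X\#S^2\times S^2, S^{(t)})$ parametrized by $S^1$ together with a family of class-$\alpha$ surfaces. I would then apply Baraglia's family Seiberg-Witten adjunction inequality~\cite{Ba20}: for the spin$^c$ structure $\fraks\#\fraks_0$ on $X\#S^2\times S^2$, the hypothesis $\SW(X,\fraks)\neq \SW(X^\tau,\fraks)$ translates to a non-vanishing families Seiberg-Witten invariant for this family, forcing
\[
2g(S_0)-2 \ \geq\ \alpha\cdot\alpha \ +\ |c_1(\fraks\#\fraks_0)\cdot\alpha|.
\]
Since $\alpha\neq 0$ and $\mathrm{Spin}^c(S^2\times S^2)$ is an affine $\Z^2$-torsor, we may choose $\fraks_0$ so that $|c_1(\fraks_0)\cdot\alpha|$ is arbitrarily large; together with $\alpha\cdot\alpha\geq 0$, the right-hand side then exceeds the finite quantity $2g(S_0)-2$, contradicting the assumed smooth isotopy.

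The main obstacle is showing that the non-vanishing $\SW(X,\fraks)-\SW(X^\tau,\fraks)\neq 0$ genuinely propagates to a non-zero $1$-parameter family Seiberg-Witten invariant of the loop built from $\phi^n$, for all the spin$^c$ structures $\fraks\#\fraks_0$ needed to defeat the family adjunction bound. Once this is secured for $n=1$, iterating with $\phi^n$ distinguishes $S_0$ from every $S_n$ with $n\neq 0$, and hence (after reindexing) all $S_i$ from $S_j$ with $i\neq j$, producing the required infinite family. The non-negativity hypothesis $\alpha\cdot\alpha\geq 0$ is essential: it lets the linear-in-$\fraks_0$ term on the right of the adjunction inequality dominate, so the bound can be violated irrespective of the fixed self-intersection $\alpha\cdot\alpha$.
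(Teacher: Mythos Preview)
Your overall strategy matches the paper's: build a boundary-fixing diffeomorphism $\Phi$ of $C\#S^2\times S^2$ with non-trivial family Seiberg--Witten invariant on the ambient closed manifold $X\#S^2\times S^2$, set $S_i=\Phi^i(S_0)$, and distinguish the $S_i$ via Baraglia's family adjunction inequality. The construction of $\Phi$ and the topological-isotopy step are essentially the same.

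The genuine gap is your plan to vary the spin$^c$ structure $\fraks_0$ on $S^2\times S^2$ to make $|c_1(\fraks\#\fraks_0)\cdot\alpha|$ arbitrarily large. The only $\fraks_0$ for which one can establish $\FSW(X\#S^2\times S^2,\fraks\#\fraks_0,\Phi)\neq0$ is the spin structure $\frakt$. The computation decomposes $\Phi=\phi\circ f^{-1}\circ\phi\circ f$ and uses Ruberman-type additivity together with the Baraglia--Konno gluing formula; but the reflection $\phi$ acts as $-1$ on $H^2(S^2\times S^2)$ and hence sends any $\fraks_0$ to $\overline{\fraks_0}$. Thus $\phi$ preserves $\fraks_0$ only when $\fraks_0=\frakt$, and the additivity input $\FSW(\ldots,\phi)$ is simply undefined for any other choice. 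With $\fraks_0=\frakt$ one has $c_1(\frakt)=0$, and since $\alpha$ lives in the $S^2\times S^2$ summand while $c_1(\fraks)$ lives in the $X$ summand, $c_1(\fraks\#\frakt)\cdot\alpha=0$. Your ``make the right-hand side arbitrarily large'' mechanism therefore collapses, and for an \emph{arbitrary} representative $S_0$ the adjunction bound $2g(S_0)-2\geq\alpha^2$ need not be violated.

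The paper avoids this by never varying $\fraks_0$. For $g(S_0)=0$ it uses the genus-zero clause of the family adjunction inequality (forcing $[S_0]$ torsion, contradicting $\alpha\neq0$ in the simply-connected $X\#S^2\times S^2$). For $g(S_0)>0$ it does not take an arbitrary representative: it takes a \emph{genus-minimizing} surface for $\alpha=p[S^2\times\ast]+q[\ast\times S^2]$ inside $S^2\times S^2$, invokes Kuga's result that $|p|,|q|\geq1$ and the Li--Li minimal genus formula $g(S_0)=(|p|-1)(|q|-1)$, and checks that this forces $2g(S_0)-2<2pq=\alpha^2$. Then the family adjunction inequality with the single spin$^c$ structure $\fraks\#\frakt$ already gives $2g(S_0)-2\geq\alpha^2$, the desired contradiction. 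So the hypothesis $\alpha^2\geq0$ is used not to let a $c_1$-term dominate, but to make the minimal-genus bound beat the self-intersection.
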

Theorem~\ref{theo: intro sphere from adjunction} in fact follows from a more general Lemma~\ref{lem: sphere from adjunction} and \ref{lem: surfaces from adjunction}, where we use a version of the adjunction inequality for families derived by Baraglia \cite{Ba20} to produce strongly equivalent yet exotic surfaces. 

Moreover, by using the recent work of Baraglia-Konno \cite{BK18} and Auckly \cite{Au23}, we show that it is possible to produce exotic surfaces in once stabilized contractible manifolds, so that they survive arbitrarily many internal stabilizations by $T^2$. This is particularly relevant in the light of the work of Baykur-Sunukjian \cite{baykur2016knotted}, where it is shown that any two exotic surfaces become smoothly isotopic after some number of internal stabilization by $T^2$.

\begin{theo}\label{thm: internal_stab_intro}
For any natural number $p$, there are infinitely many contractible smooth compact 4-manifolds $C_n$ such that there is a pair of strongly equivalent but relatively exotic spheres in $C_n \# S^2 \times S^2$ that survive $p$-many internal stabilization by $T^2$.
\end{theo}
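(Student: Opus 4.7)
The plan is to combine the cork-twist construction behind Theorem \ref{theo: intro sphere from adjunction} with the stabilized family adjunction inequality of Baraglia--Konno \cite{BK18} and the internal-stabilization refinement of Auckly \cite{Au23}.

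First I would fix an infinite family of simple SW-effective corks $\{(C_n, \tau_n)\}$ together with effective embeddings $C_n \hookrightarrow X_n$ into simply-connected closed 4-manifolds $X_n$ with $b^+(X_n) \geq 2$ and spin$^c$ structures $\fraks_n$ satisfying $\SW(X_n, \fraks_n) \neq \SW(X_n^{\tau_n}, \fraks_n)$ in $\Z/2$; such infinite families are available from the existing menagerie of corks in the literature. Each $C_n$ is contractible by definition of a cork, so it remains only to produce the pair of spheres.

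Next, mimicking the proof of Lemma \ref{lem: sphere from adjunction}, let $W_n = C_n \# S^2 \times S^2$, pick a smoothly embedded sphere $S_n \subset W_n$ in a suitable homology class, and set $S_n' = \tilde{\tau}_n(S_n)$, where $\tilde{\tau}_n$ is the extension of $\tau_n$ across the stabilization. By construction $(S_n, S_n')$ is strongly equivalent via $\tilde{\tau}_n$, and they are topologically isotopic rel boundary. Performing the $p$-fold internal $T^2$-stabilization inside a small $\tilde{\tau}_n$-invariant ball ensures that $\tilde{\tau}_n$ also carries $S_n \# pT^2$ onto $S_n' \# pT^2$, so strong equivalence is preserved after stabilization.

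The core step is to obstruct a smooth isotopy rel boundary between $S_n \# pT^2$ and $S_n' \# pT^2$. A hypothetical such isotopy, concatenated with $\tilde{\tau}_n^{-1}$, yields a loop of smooth embeddings of a genus-$p$ surface in $W_n$, which after gluing with $X_n \setminus \operatorname{int}(C_n)$ becomes a loop of embeddings into an appropriate closed spin$^c$ 4-manifold. Baraglia--Konno's family adjunction inequality \cite{BK18} applied to this loop with the spin$^c$ structure $\fraks_n$ then forces a bound involving the family Seiberg--Witten invariant of the loop of diffeomorphisms associated to the cork-twist, which is nonzero precisely because $\SW(X_n, \fraks_n) \neq \SW(X_n^{\tau_n}, \fraks_n)$. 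To allow $p$ to be arbitrary, I would invoke Auckly's refinement \cite{Au23}: enlarge the setup to a $p$-parameter family by inserting $p$ auxiliary $S^2 \times S^2$ summands into $X_n$ disjoint from the cork, and run the corresponding higher-parameter adjunction inequality, whose stronger genus bound is tuned to absorb the extra genus produced by the $p$ copies of $T^2$ while the cork-twist Seiberg--Witten jump still supplies a nonzero invariant at the top stratum.

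The main obstacle will be verifying the compatibility of Auckly's higher-parameter adjunction argument with the cork-and-neck decomposition of the closed 4-manifold: one must place the $p$ auxiliary $S^2 \times S^2$ summands inside $X_n$ so that the cork-twist diffeomorphism $\tilde{\tau}_n$ is unaffected, and carefully track the family moduli spaces under neck-stretching along $\partial C_n$ so that the Seiberg--Witten jump from the effective embedding correctly propagates into the $p$-parameter family invariant. Once these technicalities are resolved, letting $n$ vary over the infinite family of corks yields the desired infinite list of contractible manifolds.
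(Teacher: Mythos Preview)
There are two real gaps in your outline.

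\medskip
\textbf{Strong equivalence.} You set $S_n' = \tilde{\tau}_n(S_n)$ and claim the pair is strongly equivalent via $\tilde{\tau}_n$. But $\tilde{\tau}_n|_{\partial(C_n\#S^2\times S^2)} = \tau_n$, which is \emph{not} the identity (that is the whole point of a cork). So $\tilde{\tau}_n$ only gives \emph{weak} equivalence. In the paper's Lemma~\ref{lem: sphere from adjunction} this is handled by using the commutator $\Phi = \phi \circ f^{-1}\circ\phi\circ f$ (with $\phi$ the product of complex conjugations on $S^2\times S^2$), which is identity on $\partial C_n$; you need this commutator, not the raw extension.

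\medskip
\textbf{Surviving $p$ internal stabilizations.} This is the more serious gap. Your proposed mechanism---adding $p$ extra $S^2\times S^2$ summands and running a $p$-parameter family adjunction inequality---does not give a stronger genus bound. The families adjunction inequality reads $2g-2 \geq |c_1(\fraks)\cdot[S]| + [S]^2$ whenever the relevant family invariant is nonzero, independently of the dimension of the parameter space; increasing the parameter dimension does not improve the right-hand side. Since the spin structure on each new $S^2\times S^2$ has $c_1=0$, your extra summands contribute nothing to $|c_1\cdot[S]|$ either. After $p$ internal $T^2$-stabilizations the left-hand side increases by $2p$, and you have no term on the right that grows with $p$ to force a contradiction.

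The paper's mechanism is entirely different and is the key idea you are missing: one works inside $E(2)\#\overline{\C P}^2$ where there is a Gompf nucleus $N(2)$ disjoint from the Akbulut cork, performs a $(2p+1)$-log transform on that nucleus, and uses the Morgan--Mrowka--Szab\'o formula to obtain a \emph{basic class} of the form $K + 2p\,\delta$ (with $\delta$ the fiber class). The diffeomorphism $\Phi_p$ is built from the cork extension \emph{composed with} the diffeomorphism $\phi_p : X^\tau \to X^\tau_{2p+1}$ coming from the log transform, and it is this class $K+2p\,\delta$ that is fed into the $1$-parameter family adjunction inequality. After arranging via Wall's theorem that $[S]$ has a nonzero $P_1$-component paired against $\delta = Q_1$, the term $|c_1\cdot[S]|$ contributes a summand $2vp$ growing linearly in $p$, which absorbs the $2p$ from the genus increase. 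This is essentially Auckly's argument in \cite{Au23}, adapted here; it has nothing to do with higher-dimensional parameter spaces. A further subtlety is that the resulting diffeomorphism $\Phi_p$ is no longer supported in the original cork: the paper uses the Melvin--Schwartz encasement lemma to enclose both relevant Akbulut-cork copies in a single larger contractible piece $C$, and it is this $C$ (not the original cork) that appears in the statement.
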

We end this subsection by considering a consequence of our technique for producing exotic surfaces in once stabilized corks coupled with Gabai's 4D light bulb theorem \cite[Theorem 1.2]{Ga20}. Broadly, we detect the difference between the algebraic and geometric intersection of 2-spheres embedded in a 4-manifold.

Recall that a version of the generalized light bulb theorem states the following: For a simply-connected compact 4-manifold $W$ with boundary, any two spheres that are homotopic to each other while admitting a common dual sphere, are in fact smoothly isotopic. Here by a dual sphere $S'$ to a sphere $S$, we refer to a square zero sphere $S'$ that intersects $S$ geometrically once. We prove the following  estimates of geometric intersection numbers:
\begin{cor} \label{geometric int number}
Let $C$ be a simple SW-effective cork and $n$ be a non-negative integer.
Moreover, let $S$ be a smoothly embedded 2-sphere in $C\#S^2\times S^2$ which is obtained as the graph 
\[
S = \{(x, f_n(x)) \in S^2 \times S^2| x \in S^2\} \subset C\#S^2\times S^2
\]
of a smooth degree-$n$ map $f_n : S^2 \to S^2$. Then there is a smoothly embedded sphere $S'$ in $C \# S^2 \times S^2$ with the following properties: 

\begin{itemize}

\item[(i)] The 2-sphere $S'$ is topologically isotopic rel boundary and strongly equivalent to $S$.

\item [(ii)] For any smoothly embedded sphere $S''$ relatively and smoothly isotopic to $S'$, which is transverse to $\{* \} \times S^2$, we have  
\[
2 n +1\leq |S'' \cap (\{* \} \times S^2)|.
\]
\end{itemize}
\end{cor}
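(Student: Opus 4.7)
The plan is to take $S'$ to be a sphere produced by \cref{theo: intro sphere from adjunction} applied to $\alpha = [S] = [S^2 \times \{*\}] + n[\{*\} \times S^2]$, which is non-zero and has self-intersection $2n \geq 0$. This yields a smoothly embedded sphere $S'$ in $C \# S^2 \times S^2$ representing $[S]$, strongly equivalent to $S$ via a boundary-fixing diffeomorphism, yet relatively exotic to $S$. The topological isotopy rel boundary required in (i) follows from the topological classification of locally-flat $2$-spheres in simply-connected smooth 4-manifolds with boundary admitting a geometric dual sphere: here $F = \{*\} \times S^2$ is a topological dual to both $S$ and $S'$, so the result of Orson--Powell \cite{OP22} (or in a similar spirit Sunukjian) applies.

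For part (ii) the plan is to argue by contradiction. Suppose $S''$ is smoothly isotopic rel boundary to $S'$, transverse to $F$, with $|S'' \cap F| = 2k+1$ for some $0 \leq k \leq n-1$. The strategy is to assemble, using the geometric flexibility provided by the $S^2 \times S^2$-summand together with Gabai's 4D light-bulb theorem, a smooth rel boundary isotopy $S'' \sim S$; composing with the hypothesised isotopy $S'' \sim S'$ would then yield a smooth rel boundary isotopy $S \sim S'$, contradicting relative exoticness. Since $[S''] \cdot [F] = 1$ but $|S'' \cap F| = 2k+1$, the $2k$ surplus intersection points pair into $k$ oppositely-signed pairs. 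I would invoke simple-connectedness of $C \# S^2 \times S^2$ together with Wall's maneuver within the $S^2 \times S^2$-summand to realize disjoint embedded framed smooth Whitney disks for each pair; performing the Whitney moves produces a sphere $\tilde S$ smoothly isotopic rel boundary to $S''$ with $|\tilde S \cap F| = 1$. Since $F$ is then a geometric dual to $\tilde S$ and also to $S$, and $[\tilde S]=[S]$ in the simply-connected manifold $C\#S^2\times S^2$, Gabai's light-bulb theorem \cite[Theorem 1.2]{Ga20} supplies the desired smooth rel boundary isotopy from $\tilde S$ to $S$, completing the contradiction.

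The main obstacle will be to make rigorous both the smooth construction of the disjoint Whitney disks (which can carry secondary obstructions even in simply-connected 4-manifolds) and, more importantly, the precise quantitative bound $k \leq n-1$, as opposed to merely $k \leq 0$. Intuitively the sphere $S'$ produced by \cref{theo: intro sphere from adjunction} carries $n$ obstructive ``finger-move pairs'' baked into the construction via the boundary-fixing diffeomorphism, and a Whitney cancellation process only trivializes the relative smooth isotopy type of $S''$ once all $n$ of these pairs are undone. Formalizing this, either via Wall--Freedman--Quinn secondary invariants or via a refined families adjunction inequality \cite{Ba20} applied to the one-parameter family of spheres interpolating $S \sim \tilde S \sim S'' \sim S'$, will be the core technical step of the argument.
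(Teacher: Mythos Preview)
Your outline has two genuine gaps, and the paper's proof addresses both with one idea that you are missing.

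First, the Whitney-move step cannot be made to work. In dimension four, a Whitney disk for a pair of intersection points of $S''$ with $F$ is $2$-dimensional and will generically meet $S''$ (and $F$) in points; there is no mechanism to make these disks embedded and disjoint from $S''\cup F$ in a smooth $4$-manifold, even a simply-connected one. You flag this as an ``obstacle,'' but it is in fact the entire difficulty of smooth $4$-manifold topology, and it is not something that can be bypassed here.

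Second, even granting embedded Whitney disks, your argument only collides with the \emph{relative exoticness} of $S$ and $S'$, which forbids $k=0$ but says nothing about $k\le n-1$. Your final paragraph gestures at ``secondary invariants'' or a ``refined families adjunction,'' but applying the family adjunction to a family of \emph{spheres} only tells you $[S]$ is torsion, which is independent of $n$.

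The paper's move solves both problems at once: instead of Whitney-cancelling the $m$ excess pairs of intersection points, \emph{tube} them. Tubing a $(+,-)$ pair along an arc in $F$ is always possible smoothly and raises the genus by one; after $m$ tubings you get a genus-$m$ surface $\Sigma$ which is smoothly isotopic to the $m$-fold internal stabilization $S''\#_m T^2 \sim S'\#_m T^2 = \Phi(S)\#_m T^2$, and which now meets $F$ geometrically once. Since $S\#_m T^2$ also has $F$ as a geometric dual, Gabai's light bulb theorem gives $\Sigma \sim S\#_m T^2$, hence $\Phi(S\#_m T^2)\sim S\#_m T^2$. Now the \emph{higher-genus} case of the family adjunction inequality (\cref{thm: family adjunction}(1)) applied to this genus-$m$ surface yields
\[
2m-2 \;\ge\; [S]^2 \;=\; 2n,
\]
so $m\ge n+1$ and $|S''\cap F|=2m+1\ge 2n+3\ge 2n+1$. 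The quantitative dependence on $n$ enters precisely through the self-intersection term in the adjunction inequality for the stabilized surface, not through any refinement of the sphere case.
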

\noindent
Note that the algebraic intersection of $S''$ and $\{* \} \times S^2$ is one.

\subsection{Exotic $S^3$-embeddings}
Apart from exotically knotted embeddings of surfaces in 4-manifolds, there is another interesting class of exotic embeddings which stem from 3-manifolds. Indeed, recent work \cite{IKMT22, KMT22, KMT23first} shows the existence of such codimension one embeddings \footnote{As in the surface case, we use the terms of strong/weak equivalence and relative exotica for embeddings of 3-manifolds into 4-manifolds.}. 
However, constructing equivalent exotic embeddings of $S^3$ in a small 4-manifold is still a hard question. 
The smallest known example of a 4-manifold admitting such an embedding has $b_2 =4$, see \cite[Remark 1.8]{IKMT22}. We shall provide a 4-manifold containing equivalent exotic $S^3$ with $b_2=2$, namely one coming from once stabilized corks.

\begin{theo}
\label{theo: intro S3}
Let $C$ be a simple SW-effective cork. 
Let $Y$ be the separating $S^3$ in the neck of $C\#S^2\times S^2$.
Then there exist infinitely many smoothly embedded 3-spheres $\{Y_i\}_{i=1}^\infty$ in $C\#S^2\times S^2$ that are strongly equivalent to $Y$ but mutually relatively exotic.
\end{theo}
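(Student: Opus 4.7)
The plan is to deduce \cref{theo: intro S3} from \cref{theo: intro sphere from adjunction} by a plumbing-neighborhood construction, using Gabai's 4D light bulb theorem inside the plumbing to transfer isotopy information from the 3-sphere level down to the 2-sphere level. Write $W := C \# S^2 \times S^2$ and split $W = C \cup_Y P$, where $Y$ is the neck 3-sphere and $P := (S^2 \times S^2) \setminus \mathring{B}^4$ is a regular neighborhood of the wedge $\Lambda := (S^2 \times \{p\}) \cup (\{p\} \times S^2)$, so that $Y = \partial \nu(\Lambda)$.

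First I would apply \cref{theo: intro sphere from adjunction} to the homology class $\alpha = [S^2 \times \{p\}]$. This produces infinitely many strongly equivalent but mutually relatively exotic smoothly embedded 2-spheres $\{S_n^{(1)}\}_{n=1}^\infty$ in $W$. Because the equivalences are strong, each $S_n^{(1)}$ is realized by a diffeomorphism $\Phi_n \in \Diff(W,\partial W)$ with $\Phi_n(S^2 \times \{p\}) = S_n^{(1)}$. Set
\[
S_n^{(2)} := \Phi_n(\{p\} \times S^2), \qquad Y_n := \Phi_n(Y) = \partial \nu\bigl(S_n^{(1)} \cup S_n^{(2)}\bigr).
\]
Each $Y_n$ is a smoothly embedded 3-sphere in $W$, strongly equivalent to $Y$ via $\Phi_n$, and $S_n^{(2)}$ is a geometric dual to $S_n^{(1)}$ contained in the plumbing side $\Phi_n(P)$ of $Y_n$.

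Next, suppose for contradiction that $Y_n$ and $Y_m$ are smoothly isotopic rel $\partial W$ for some $n \neq m$. Ambient isotopy extension provides $h \in \Diff(W,\partial W)$ with $h \simeq \id$ rel $\partial W$ and $h(Y_n) = Y_m$. The two complementary pieces of $Y_k$ are distinguished by their topology (one side is contractible, the other has $H_2 \cong \Z^2$), so $h$ must carry the contractible side of $Y_n$ to that of $Y_m$ and, after a further isotopy supported in a collar of $Y_m$, satisfy $h(\Phi_n(P)) = \Phi_m(P)$. Then $h(S_n^{(1)})$ is a smoothly embedded sphere in $\Phi_m(P)$ representing $[S^2 \times \{p\}]$, with geometric dual $h(S_n^{(2)})$. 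Inside the simply-connected $\Phi_m(P) \cong (S^2 \times S^2) \setminus \mathring{B}^4$, Gabai's 4D light bulb theorem \cite{Ga20} yields a smooth isotopy between $h(S_n^{(1)})$ and $S_m^{(1)}$. Concatenating with the ambient isotopy from $\id$ to $h$ produces a smooth isotopy rel $\partial W$ from $S_n^{(1)}$ to $S_m^{(1)}$, contradicting the mutual relative exoticness guaranteed by \cref{theo: intro sphere from adjunction}.

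The main obstacle is the middle step: arranging the ambient isotopy $h$ so that the contractible and plumbing sides of $Y_n$ are matched to those of $Y_m$ without disturbing $\partial W$, and verifying the hypotheses of the light bulb theorem (in particular producing the dual $h(S_n^{(2)})$ inside $\Phi_m(P)$ transverse to $h(S_n^{(1)})$ in a single point). Once these technical points are handled cleanly, the rest of the argument is a formal chase through the plumbing decomposition.
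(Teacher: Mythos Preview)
Your approach differs substantially from the paper's, which does not reduce to the surface theorem but argues directly with the family Seiberg--Witten invariant. The paper sets $Y_i = \Phi^i(Y)$ for the exotic diffeomorphism $\Phi$ of $X\#S^2\times S^2$ built in the proof of \cref{lem: sphere from adjunction}. If some $Y_i$ were smoothly isotopic to $Y$ rel $\partial(C\#S^2\times S^2)$, then after composing with that ambient isotopy one may assume $\Phi^i$ fixes the neck embedding $\iota:S^3\hookrightarrow X\#S^2\times S^2$; its mapping torus is then a fiberwise connected sum of the mapping tori of some $f_1\in\Diff(X)$ and some $f_2\in\Diff(S^2\times S^2)$. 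Since $\Phi^i$ is homologically trivial, so is $f_2$, and the families gluing formula \cite[Theorem~9.5]{BK18} forces $\FSW(X\#S^2\times S^2,\fraks',\Phi^i)=0$, contradicting the nonvanishing already computed. No light bulb argument is needed.

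Your reduction via the light bulb theorem is an attractive idea, but the light bulb step has a genuine gap. Gabai's theorem requires a \emph{common} geometric dual: a single square-zero sphere meeting each of $h(S_n^{(1)})$ and $S_m^{(1)}$ transversally in exactly one point. You produce two \emph{separate} duals---$h(S_n^{(2)})$ for $h(S_n^{(1)})$, and $S_m^{(2)}$ for $S_m^{(1)}$---but neither is shown to meet the \emph{other} sphere geometrically once (the intersection number is $1$ only algebraically). The difficulty you flag in your last paragraph, that $h(S_n^{(2)})$ is dual to $h(S_n^{(1)})$, is in fact automatic since $h$ is a diffeomorphism; the real obstacle is the missing common dual. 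Arranging one inside $(S^2\times S^2)\setminus\mathring{B}^4$ is not straightforward: trying to isotope $h(S_n^{(2)})$ to $S_m^{(2)}$ runs into the identical problem with the roles of the two factors reversed, and Norman-trick tubing does not obviously close up. Without this step the contradiction with \cref{theo: intro sphere from adjunction} is not reached.
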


\subsection{Exotic surface links}
To further demonstrate the wide applicability of our methods to produce exotically knotted surfaces, we now exhibit examples of exotically knotted closed surfaces with multiple connected components. We call such a surface an \textit{exotically knotted closed surface link}. There is not much known about the existence of exotic surface links. In \cite[Theorem 8.1]{HKKMPS21}, Hayden-Kjuchukova-Krishna-Miller-Powell-Sunukjian showed the existence of an exotic surface link (with 2-components) in a closed 4-manifold. In particular, they distinguish the smooth isotopy between the surface links by obstructing a diffeomorphism of the 4-manifold-surface link pair. This is done by use of surgery and reducing the obstruction to a question of distinguishing knotted disks (with multiple components). 

In alignment with the theme of the article, we give an example of an exotic surface link (all of whose components are spheres) in a closed 4-manifold which are equivalent (that there is a diffeomorphism of the closed 4-manifold which takes the surface link to itself). In particular, it is not possible to distinguish the pair of exotic surface links by surgery operations and using existing obstructions of the diffeomorphism type of the complements. Moreover, we also show that each component of this surface link is in fact smoothly isotopic to the corresponding component of the link given by the diffeomorphism mediating the equivalency.

\begin{theo}\label{S2link}
There is an exotic pair of four component surface links 
\[
f, f' : S^2 \cup S^2 \cup S^2 \cup S^2 \hookrightarrow X
\]
in a simply-connected closed 4-manifold $X$ which are equivalent and each component is smoothly isotopic each other. 
\end{theo}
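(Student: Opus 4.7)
The proof of \cref{S2link} will build on the construction of \cref{intro: s1_family_surface_closed}, which already yields an equivalent exotic pair of spheres in a closed simply-connected 4-manifold via a diffeomorphism detected by 2-parameter family Seiberg--Witten theory. The task now is to promote this exotic pair of individual spheres to a four-component sphere link while additionally arranging that each corresponding component pair is smoothly isotopic as individual embeddings; the exoticness will then persist only at the level of the link (simultaneous isotopy), not at the level of individual components.

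The plan is first to take the closed 4-manifold $X$ from \cref{intro: s1_family_surface_closed}, of the form $X = X_0 \# S^2 \times S^2$ with $X_0$ containing a simple non-effective cork, together with the diffeomorphism $\Phi : X \to X$ (produced via \cref{intro: main_lemma}, obtained as an extension of the cork twist possibly composed with a Seidel Dehn twist) that realizes the equivalence between the exotic pair of spheres representing $[S^2\times\{*\}] + [\{*\}\times S^2]$. We then choose four mutually disjoint smoothly embedded 2-spheres $L_1, L_2, L_3, L_4 \subset X$, each admitting a dual 2-sphere, with homology classes chosen so that: (i) each pair $(L_i, \Phi(L_i))$ admits a common dual sphere in $X$; (ii) the four-component link $L := L_1 \cup L_2 \cup L_3 \cup L_4$ is topologically isotopic to $\Phi(L)$, using the topological triviality of $\Phi$ via \cite{OP22}; and (iii) the classes $[L_i]$ are orthogonal to the $\spinc$ structure that detects the cork in \cref{intro: main_lemma}, so that the family Seiberg--Witten obstruction survives in the link complement. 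Setting $f := L$ and $f' := \Phi(L)$, the two embeddings are equivalent via $\Phi$; moreover, by Gabai's 4D light-bulb theorem \cite{Ga20}, since $X$ is simply-connected and each pair $(L_i, \Phi(L_i))$ shares a common dual, each $L_i$ is smoothly isotopic to $\Phi(L_i)$ as an individual embedded sphere, yielding the required component-wise smooth isotopy between $f$ and $f'$.

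The main obstacle is showing that $f$ and $f'$ are not smoothly isotopic as four-component links. The intended argument is by contradiction: a smooth isotopy between $f$ and $f'$ would extend to an ambient smooth isotopy $\Psi_t$ of $X$ with $\Psi_1(L) = \Phi(L)$ setwise, so $\Phi^{-1} \circ \Psi_1$ would preserve $L$ setwise while being smoothly isotopic in $\mathrm{Diff}(X)$ to $\Phi^{-1}$. Restricting to the link complement $X \setminus \nu(L)$ and invoking the $\spinc$-preserving family Seiberg--Witten obstruction of \cref{intro: main_lemma} on the resulting diffeomorphism of the complement, one arrives at a contradiction with the exoticness already established for the original pair of spheres in \cref{intro: s1_family_surface_closed}. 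The delicate technical point is ensuring that the family Seiberg--Witten computation transfers cleanly from $X$ to the link complement, which is precisely what condition (iii) in the choice of the auxiliary spheres $L_i$ is designed to guarantee; verifying this compatibility is expected to be the hardest part of the argument.
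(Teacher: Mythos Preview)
Your proposal diverges substantially from the paper's proof, and the non-isotopy step has a genuine gap.

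\textbf{What the paper actually does.} The paper derives \cref{S2link} from \cref{real main: link}, which is based on the \emph{real} 1-parameter family diagonalization (\cref{thm: real S1} and \cref{main real}), not on the 2-parameter $T^2$-family obstruction used in \cref{intro: s1_family_surface_closed}. The ambient manifold is $C \# (\#_3\, S^2\times S^2)$ (three stabilizations, not one), and the four-component link $F$ is a specific admissible surface in the sense of real Seiberg--Witten theory: two of its components are $S^2\times\{N\}_1$ and $S^2\times\{S\}_3$, and the other two are pairwise connected sums $S^2\times\{S\}_i \,\#\, S^2\times\{N\}_{i+1}$. The extension $\wt f$ of the cork twist is taken to be the identity on the last two $S^2\times S^2$ summands. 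Non-isotopy of $F$ and $\wt f(F)$ then comes directly from \cref{main real}: the double branched cover along $F$ provides the setting in which the real family diagonalization applies. Component-wise smooth isotopy is obtained not via Gabai but via the 1-stable isotopy theorem of \cite{AKMRS19} for the two components touched by $\wt f$, and trivially for the two components on which $\wt f$ is the identity.

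\textbf{The gap in your argument.} Your obstruction step is not an argument. You assert that if the link $L$ were smoothly isotopic to $\Phi(L)$, then after ambient isotopy $\Phi$ preserves $L$, and that ``restricting to the link complement and invoking the $\spinc$-preserving family Seiberg--Witten obstruction'' yields a contradiction with the exoticness of the single sphere $\Delta$ from \cref{intro: s1_family_surface_closed}. But $\Delta$ is not one of your $L_i$ (it cannot be, since you have arranged each $L_i$ to be smoothly isotopic to $\Phi(L_i)$ via a common dual, whereas $\Delta$ is not isotopic to $\Phi(\Delta)$), so there is no logical link between a putative isotopy of $L$ and the non-isotopy of $\Delta$. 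Moreover, \cref{thm: rel T2} applies to 4-manifolds with rational homology sphere boundary, not to link complements with toral boundary, and the way the $\Delta$-obstruction works is through the Seidel Dehn twist $T_\Delta$ and the commutator $[T_\Delta,\Phi]$; nothing in your setup produces an analogue of this for the link $L$. Condition (iii) in your plan (``orthogonal to the $\spinc$ structure'') does not address this: the issue is not preservation of $\fraks$ but the absence of any mechanism by which a link isotopy would force the relevant commutator to be trivial. In short, the paper's choice of real Seiberg--Witten theory is not incidental: the link $F$ enters as a branch locus, and that is precisely what allows the obstruction to see the whole link at once.
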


These examples are obtained from modifying our approach to produce exotic surfaces in 4-manifolds generalizing Donaldson's diagonalization for family real Seiberg--Witten theory. Moreover, these examples of the exotic surfaces, like Theorem~\ref{intro: thm RP2 local}, are associated to a non-effective embedding of a cork in a 4-manifold $X$. The 4-manifold in Theorem~\ref{S2link} is obtained by a three-fold stabilization of $X$.

\textbf{Organization}
In \cref{section:Preliminaries}, we review a class of strong corks that are detected by the family Seiberg--Witten theory. In \cref{section: A family version of real Seiberg--Witten theory}, we establish a real 1-parameter family version of Donaldson's diagonalization theorem, which serves as an obstruction to smooth isotopy between certain surfaces. In \cref{section: Diffeomorphisms out of corks}, we provide examples of simple (SW) effective corks. Finally, in \cref{section: Proofs of main theorems}, we present the proofs of the main theorems.

\textbf{Acknowledgement} We thank Maggie Miller for helpful conversations. We also thank Irving Dai, Kristen Hendricks and Jin Miyazawa for their comments on the article.
HK was partially supported by JSPS KAKENHI Grant Number 21K13785.
AM was partially supported by NSF DMS-2019396. MT was partially supported by JSPS KAKENHI Grant Number 20K22319 and 22K13921.

\section{Preliminaries: Family diagonalization theorem  over $S^1$ and corks}\label{section:Preliminaries}

We first explain the result \cite[Theorem 1.7]{KMT23first}, which showed that family Seiberg--Witten theory detects corks. Specifically, the obstruction comes from a version of the family diagonalization theorem over $S^1$. 

First, let us recall the definition of $H^+(X)$ for an oriented compact 4-manifold $X$.
Let $X$ be an oriented compact 4-manifold, with or without boundary.
The Grassmanian $\mathrm{Gr}^+(X)$, consisting of maximal-dimensional positive-definite (with respect to the intersection form) subspaces of $H^2(X;\R)$ is known to be contractible (see for example \cite[Subsection~3.1]{LL01}).
By $H^{+}(X)$, we denote a choice of such a subspace. Let $\phi : H^2(X;\R) \to H^2(X;\R)$ be an automorphism of the intersection form. Then since $\mathrm{Gr}^+(X)$ is contractible, we can unambiguously refer to the notion of that $\phi$ {\it preserves} or {\it reverses} orientation of $H^{+}(X)$. See for example \cite[Section 2]{KMT23first} for more details. Given an orientation-preserving diffeomorphism $f : X \to X$, we say that $f$ preserves (or reverses) orientation of $H^+(X)$ if the induced map $f^\ast : H^2(X;\R) \to H^2(X;\R)$ preserves (or reverses) orientation of $H^+(X)$.

Let us now recall the definition of a cork:
\begin{defi}
We say that a tuple $(Y,C,\tau)$ is a cork, if there exist contractible compact smooth 4-manifold $C$ with boundary $Y$ (which is always an integer homology sphere), and a diffeomorphism
\[
\tau: Y \rightarrow Y
\]
such that $\tau$ does not extend over $C$ as a diffeomorphism.
\end{defi}
Although, this definition is weaker than the one originally defined by Akbulut \cite{Ak91_cork}, we use it for simplicity. Sometimes we refer to a cork by $(C,\tau)$ omittinig the boundary. By the work of Freedman \cite{Fr82} $\tau$ does extend over $C$ as a homeomorphism. There are many examples in the literature of corks, see for example \cite{AY_plugs}. Following the work of \cite{MAt, CFHS} Corks plays a fundamental role in the study of exotic smooth structures. We call a cork to be of \textit{Mazur type}, if it is build by one 1-handle and 2-handle, which algebraically cancels the 1-handle. Sometimes we will also require the definition of a strong cork, first introduced by Lin-Ruberman-Saveliev \cite{LRS18}.
\begin{defi}
We call a pair $(Y,\tau)$ a {\it strong cork}, if $Y$ is an integral homology 3-sphere and $\tau : Y \to Y$ is an orientation-preserving diffeomorphism such that $Y$ bounds a contractible compact smooth 4-manifold, and that $\tau$ does not extend over \textit{any} integral homology 4-ball $W$ with $\partial W=Y$.
\end{defi} 
Many well-known example of corks also tend to be strong corks \cite{DHM22}. Such corks were detected by a variant of involutive Heegaard Floer homology. Building on the work of Baraglia \cite{Ba21}, in \cite[Theorem 1.7]{KMT23first}, the authors developed a tool stemming from the famliy Seiberg--Witten theory to detect strong corks. We state it below:
\begin{theo}\label{strong_cork}
Let $(Y , \phi)$ be an oriented homology 3-sphere with an orientation-preserving (not necessarily order 2) diffeomorphism $\phi$. Suppose that $Y$ bounds a 4-manifold $X$ with $b^+(X)=1$ and $b_1(X)=0$. 
Now if $\Phi$ is any orientation-preserving, smooth extension of $\phi$ so that $\Phi$ is $H^{+}$-reversing, and $\spinc$-preserving for some $\spinc$-structure $\s$ together with
\[
\frac{c_1(\fraks)^2-\sigma(X)}{8} > 0,
\]
then $(Y, \phi)$ is a strong cork. 
\end{theo}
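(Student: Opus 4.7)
I will argue by contradiction. Suppose $(Y,\phi)$ were not a strong cork, so $\phi$ extends to a diffeomorphism $\psi : W \to W$ of some integral homology 4-ball $W$ with $\partial W = Y$. The idea is to manufacture from $\Phi$ and $\psi$ a diffeomorphism of a closed 4-manifold $Z$ to which a family diagonalization inequality can be applied, and then derive a contradiction with the positivity hypothesis on $c_1(\fraks)^2 - \sigma(X)$.

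Explicitly, I form the closed 4-manifold $Z := X \cup_Y (-W)$ by gluing $X$ to $-W$ along $Y$ via the identity, and set $F := \Phi \cup \bar\psi : Z \to Z$. Since $W$ is an integer homology 4-ball and $Y$ is an integer homology 3-sphere, Mayer--Vietoris gives $b^+(Z) = b^+(X) = 1$, $b_1(Z) = 0$, and $\sigma(Z) = \sigma(X)$; furthermore, $H^2(W;\Z) = 0$ forces $W$ to carry a unique spin$^c$ structure compatible with $\fraks|_Y$, so $\fraks$ extends uniquely to a spin$^c$ structure $\hat{\fraks}$ on $Z$ with $c_1(\hat{\fraks})^2 = c_1(\fraks)^2$. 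Under the identification $H^+(Z) \cong H^+(X)$, the diffeomorphism $F$ inherits from $\Phi$ the property of being orientation-preserving and $H^+$-reversing, and is automatically $\spinc$-preserving on $-W$ by uniqueness, hence on all of $Z$.

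At this stage I invoke the $S^1$-family Seiberg--Witten obstruction for the mapping torus of $F$, as developed by Baraglia \cite{Ba21} and sharpened by the authors. In the presence of an $H^+$-reversing, $\spinc$-preserving, orientation-preserving diffeomorphism of a closed 4-manifold with $b^+ = 1$ and $b_1 = 0$, the family Bauer--Furuta / Pin$(2)$-equivariant setup on the mapping torus produces a family analogue of Donaldson's diagonalization theorem and yields
\[
\frac{c_1(\hat{\fraks})^2 - \sigma(Z)}{8} \le 0.
\]
Combined with $c_1(\hat{\fraks})^2 = c_1(\fraks)^2$ and $\sigma(Z) = \sigma(X)$, this directly contradicts the hypothesis $\tfrac{c_1(\fraks)^2 - \sigma(X)}{8} > 0$, so no such extension $\psi$ can exist and $(Y,\phi)$ is a strong cork.

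The main obstacle is the family Seiberg--Witten step itself: in the $b^+(Z) = 1$ regime the ordinary invariant is chamber-dependent, and one must carefully pick a chamber and then exploit the additional $\Z/2$-symmetry coming from the $H^+$-reversal to upgrade the analysis to a Pin$(2)$-equivariant family invariant on the mapping torus, from which the diagonalization-style inequality is extracted along Baraglia's blueprint. Once that black-box inequality is in place, the gluing/topological bookkeeping above is routine.
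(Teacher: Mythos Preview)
Your proposal is correct and matches the paper's approach: the paper does not give a proof in-text but simply notes that the theorem follows from Baraglia's family diagonalization theorem \cite{Ba21} (the detailed argument being in the authors' earlier paper \cite{KMT23first}), and your contradiction argument---capping $X$ with the putative homology ball $W$ to form a closed $Z$ with $b^+(Z)=1$, gluing $\Phi$ and $\psi$, and applying the $S^1$-family diagonalization to the resulting $H^+$-reversing, $\spinc$-preserving diffeomorphism---is exactly how that citation is unpacked. One minor point: the relevant equivariance in Baraglia's setup for a general $\spinc$ structure is $S^1$, not $\Pin(2)$; the $\Pin(2)$ refinement is for the spin case, so your aside about ``Pin$(2)$-equivariant family invariant'' is a slight misattribution, though it does not affect the argument.
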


This follows from the family analog of Donaldson's diagonaization theorem proven using Seiberg--Witten theory due to Baraglia~\cite{Ba21}.  We shall discuss a generalization of these results combined with real Seiberg--Witten theory in the next section.  The class of strong corks detected by \cref{strong_cork} is the main ingredient to provide equivalent exotic surfaces. 

\begin{defi}
    We call a cork $(C, \tau)$ is detected by \textit{spin$^c$ preserving family Seiberg--Witten theory} if the assumptions of \cref{strong_cork} are satisfied with certain tuple $(X, \Phi, \fraks)$. Let us call $(X, \Phi)$ (forgetting $\fraks$) a {\it filling as $S^1$-family}.  
\end{defi}

Now by means of an example we demonstrate how to detect corks using the obstruction from Theorem~\ref{strong_cork}. This was originally shown to be a strong cork by \cite{LRS18}, see also \cite{DHM22}.

\begin{ex}\label{ex: akbulut cork}
Consider the Akbulut cork $(C ,\tau)$. The boundary of this cork $Y$ can be identified with $(+1)$-surgery on the mirror of the $9_{46}$ knot, $Y:=S^3_{+1}(\bar{9}_{46})$. The associated cork-twist $\tau$ is depicted in Figure~\ref{fig: akbulut_example}. It was proved in \cite[Lemma 7.4]{DHM22}, that there is an equivariant negative-definite cobordism $W$ (with intersection form $2(-1)$) from $(Y,\tau)$ to $(\Sigma(2,3,7), \tau')$. That is the cobordism $W$ is equipped with a diffeomorphism $f$ which restricts to the diffeomorphism $\tau$ and $\tau'$ on $Y$ and $\Sigma(2,3,7)$ respectively, where $\tau'$ represents the involution of $\Sigma(2,3,7)$ obtained as the complex conjugation (thinking $\Sigma(2,3,7)$ of as a link of singularity).
\begin{figure}[h!]
\center
\includegraphics[scale=0.6]{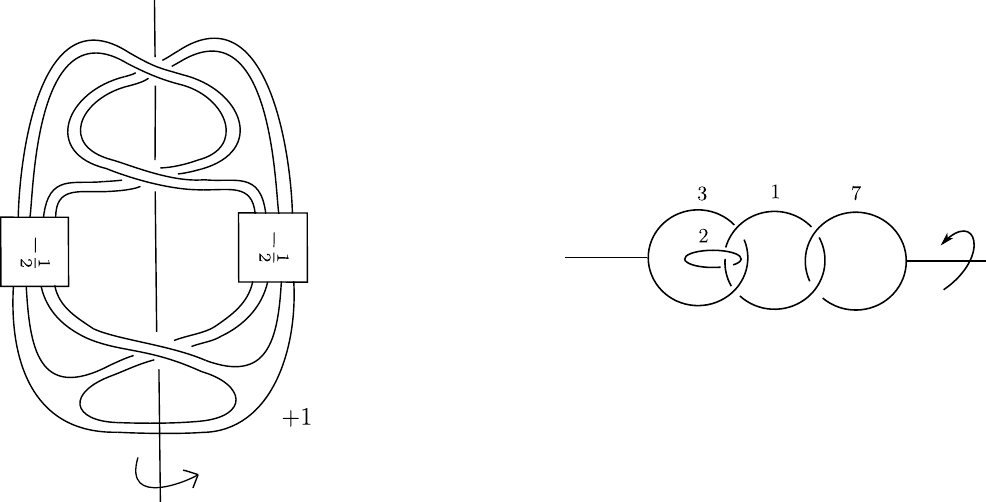}
\caption{Left: The boundary of the Akbulut cork. Right: A Kirby diagram of $(\Sigma(2,3,7), \tau')$  obtained as the boundary of the plumbing.}\label{fig: akbulut_example}
\end{figure}
Let us now define a 4-manifold $(W_0, f_{W_0})$ bounded by $(-\Sigma(2,3,7), \tau')$. To define the manifold $W_0$, recall that there is an embedding of $\Sigma(2,3,7)$ into the 4-manifold $K3$. $W_0$ is precisely defined as the complement of the Minor fiber $M(2)$ in $K3$ (note that Minor fiber has boundary $\Sigma(2,3,7)$). In particular, the intersection form of $W_0$ is given as $-E_8 \oplus H$, where $H$ represents the hyperbolic form. It can be seen from Saveliev \cite{saveliev1998dehn}, that $W_0$ admits a Kirby presentation given in Figure~\ref{fig: akbulut_example}. 
\begin{figure}[h!]
\center
\includegraphics[scale=0.6]{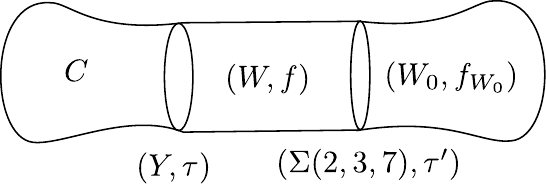}
\caption{Schematic of the $S^1$-family filling.}\label{fig: S1_family_filling}
\end{figure}
The involution $f_{W_0}$ on $W_0$ is then defined using strongly invertible symmetry of the diagram. Moreover in \cite[Lemma 4.6]{KMT23first}, it was shown that this extension of $f_{W_0}$ onto $W_0$ reverses the orientation of $H^{+}(W_0)$. Now we equip $W_0$ with its unique spin structure $\s_0$, moreover, by construction there is a $\spinc$-structure $\s_W$ on the negative-definite cobordism (given by the generators of $H_2$)  which is preserved by $f$. Let us define
\[
\s:= \s_{W} \cup \s_0, \; \; \mathrm{and} \; \; X := W \cup W_0.
\]
Then it follows that the diffeomorphism
\[
f_X:= f \cup f_{W_0}:  W \cup W_0 \rightarrow W \cup W_0.
\]
preserves the $\spinc$-structure $\s$ and reverses the orientation of $H^{+}(X)$, while satisfying 
\[
c_1(\fraks)^2-\sigma(X) > 0,
\]
Hence $(X,f_X)$ is a filling of $(Y,\tau)$ as a $S^1$-family and $(Y,\tau)$ is a strong cork, see Figure~\ref{fig: S1_family_filling}. Note that the intersection form of the closed 4-manifold $X \cup C$ is isomorphic to $(+1) \oplus 11(-1)$.
\end{ex}
This example will serve the cornerstone of our equivalent exotic surface detection results.

\section{A family version of real Seiberg--Witten theory}
\label{section: A family version of real Seiberg--Witten theory}

We develop a new tool to study exotic surfaces called a family version of real Seiberg--Witten diagonalization theorem over $S^1$, which employs a 1-parameter family version of real Seiberg--Witten theory. 
The original real Seiberg--Witten theory has been developed in \cite{TW09, Na13, Ka22, KMiT21, JLi22} over the years.

\subsection{Family real diagonalization theorem over $S^1$ }\label{Family real diagonalization theorem over $S^1$}
 Let $X$ be a simply-connected closed smooth oriented 4-manifold. 
Given an embedded surface $S$ in $X$, if we can take a branched covering of $X$ along $S$, we denote the cover by $\Sigma_2(S)$.
In the real Seiberg--Witten theory, we shall use certain kinds of surfaces embedded into 4-manifolds for taking the double-branched covering spaces.

\begin{defi}
A smoothly embedded not necessary connected (possibly un-orientable) surface $S$ in $X$ is called an {\it admissible surface} if $S$ satisfies the following conditions:
\begin{itemize}
   \item $[S] \equiv 0 \; (\operatorname{mod} 2)$, which ensures that there is the branched covering $\Sigma_2(S)$ of $X$ along $S$.
    \item The equality \begin{align}\label{b+=0}
       b^+(\Sigma_2(S)) = b^+(X)
    \end{align}
    is satisfied,
    \item There is a spin$^c$ structure $\fraks_R$ on  $\Sigma_2(S)$ such that 
    \begin{align}\label{add 3}
    \tau ^* \fraks_R \cong \overline{\fraks}_R
   \text{ and }
    c_1(\fraks_R)^2 - \sigma(\Sigma_2(S)) = 0, 
    \end{align}
    where $\tau$ is the branched covering involution on $\Sigma_2(S)$.
\end{itemize}

We call the pair $(X, S)$ an admissible pair.

\end{defi}

\begin{rem}
 Under the condition \eqref{b+=0}, from \cite[Theorem 1.1]{KMT24i}, one can see 
\[
  c_1(\fraks_R)^2  - \sigma (\Sigma_2(S))\leq 0.
\]
So the left-hand side of \eqref{add 3} cannot be positive. 
\end{rem}

For an admissible pair $(W, S)$, the conditions \eqref{b+=0} and \eqref{add 3} imply that the real Bauer--Furuta invariant of a pair $(X, S')$ of 4-manifold with the surface embedding and that of the pairwise connected sum $(W, S) \# (X, S')$ live in the same equivariant stable (co)homotopy group.  
Now it has been observed in \cite{Na00} that the existence of a spin structure on $\Sigma_2(S)$ is equivalent to 
\[
PD([S])/2 \equiv w_2(X) \mod 2
\]
and it is unique if it exists since we see $H^1(\Sigma(S); \Z_2)=0$ from $H_1(X; \Z_2)=0$, where $PD$ denotes the Poincar\'e duality. 

Now, we give concrete examples of admissible surfaces. 
\begin{lem}\label{ex adm sur} The following pairs are examples of admissible pairs. 
\begin{itemize}
\item $X=  S^4$, $S$ is any $2$-knot in $X$, 
\item $X=S^4$, $S$ is the standard $\R P^2$-knot in $X$ whose branched cover is negative-definite, 
    \item $X= \C P^2$, $S$ is a degree-$2$ complex curve in $\C P^2 =  X$, and
    \item $X =S^2 \times S^2$, $S= S^2 \times \{N, S\} \subset S^2 \times S^2$.
\end{itemize}

Also, a pairwise connected sum $S_1\# S_2 \subset X_1 \# X_2$ of two admissible surfaces $S_1 \subset X_1$ and $S_2 \subset X_2$ is also admissible. 
   
\end{lem}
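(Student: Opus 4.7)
My plan is to verify the three defining conditions of admissibility---divisibility of $[S]$, the equality $b^+(\Sigma_2(S))=b^+(X)$, and existence of a suitable real $\spinc$-structure $\fraks_R$---for each of the four listed examples, and then to argue closure under pairwise connected sum. The divisibility condition is immediate in every case: for $X=S^4$ the mod-$2$ Poincar\'e dual vanishes since $H^2(S^4;\Z/2)=0$; for a degree-$2$ curve in $\C P^2$ we have $[S]=2h$; and for $S^2\times\{N,S\}\subset S^2\times S^2$ we have $[S]=2[S^2\times\pt]$.

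For the equality $b^+(\Sigma_2(S))=b^+(X)$, I identify each branched cover explicitly. For a $2$-knot $S\subset S^4$, the Euler-characteristic and signature formulas yield $\chi(\Sigma_2(S))=2$ and $\sigma(\Sigma_2(S))=0$; combined with a Mayer--Vietoris computation starting from a Seifert hypersurface for $S$ that shows $b_1(\Sigma_2(S))=0$, this forces $b^+=0=b^+(S^4)$. For the prescribed standard $\R P^2\subset S^4$, $\Sigma_2(S)\cong\overline{\C P^2}$ by hypothesis, so $b^+=0$. A smooth degree-$2$ curve in $\C P^2$ is a conic, and the identification $\C P^2=\mathrm{Sym}^2(\C P^1)$ realizes $\Sigma_2(\C P^2,S)\cong\C P^1\times\C P^1=S^2\times S^2$, giving $b^+=1=b^+(\C P^2)$. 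Finally, for $S=S^2\times\{N,S\}\subset S^2\times S^2$, the branched cover is $S^2\times S^2$ again, since the double cover of $S^2$ branched at two points is $S^2$, so $b^+=1$.

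Now I choose the $\spinc$-structure $\fraks_R$. In the three cases where $\Sigma_2(S)$ is either a rational homology $4$-sphere (the $2$-knot case) or spin with vanishing signature (the conic and parallel-spheres cases), I take $\fraks_R$ induced from the spin structure, so that $c_1(\fraks_R)$ is torsion or zero, $\overline{\fraks}_R\cong\fraks_R$, and $c_1(\fraks_R)^2-\sigma(\Sigma_2(S))=0$; the covering involution preserves $\fraks_R$ either by uniqueness of the spin structure on a simply-connected rational homology sphere, or by direct inspection of the swap involution $(x,y)\mapsto(y,x)$ on $\C P^1\times\C P^1$ and the fiberwise involution $(x,z)\mapsto(x,-z)$ on $S^2\times S^2$ (both orientation-preserving, and preserving the unique spin structure). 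For the $\R P^2$ case, on $\overline{\C P^2}$ I take $\fraks_R$ with $c_1(\fraks_R)$ a generator of $H^2$; then $c_1(\fraks_R)^2-\sigma(\overline{\C P^2})=(-1)-(-1)=0$, and since the covering involution is complex conjugation on $\overline{\C P^2}$, acting as $-1$ on $H^2$, it maps $\fraks_R$ to $\overline{\fraks}_R$.

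Closure under pairwise connected sum follows from the diffeomorphism $\Sigma_2(X_1\#X_2, S_1\#S_2)\cong\Sigma_2(X_1,S_1)\#\Sigma_2(X_2,S_2)$---the two preimages of the connect-sum ball in the covers merge along the branch locus into a single connect-sum tube---combined with additivity of $[S]$, $b^+$, $\sigma$, and $c_1(\fraks_R)^2$ under connected sum; the induced $\spinc$-structure on the sum is the internal connect sum of the two pieces, and the covering involutions assemble equivariantly to intertwine it with its conjugate. The main obstacle I anticipate is the $2$-knot case: ensuring $b_1(\Sigma_2(S))=0$ for a general $2$-knot requires a careful Alexander-invariant computation, and for the applications in the paper one may safely restrict to $2$-knots whose double branched cover is a rational homology $4$-sphere (a broad class containing all ribbon and spun $2$-knots with non-vanishing Alexander invariant at $t=-1$).
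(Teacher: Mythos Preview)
Your argument is correct and follows the same strategy as the paper's proof: identify each branched double cover explicitly, choose $\fraks_R$ to be the spin structure (or, for $\overline{\C P^2}$, the standard $\spinc$-structure with $c_1$ a generator), and verify the numerics. Your identifications of the four covers---a $\Z_2$-homology $S^4$, $\overline{\C P^2}$, $S^2\times S^2$, and $S^2\times S^2$---agree with the paper's, and your additivity argument for connected-sum closure matches the paper's observation that all the conditions are linear under pairwise connected sum.

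The only point worth addressing is your closing caveat about the $2$-knot case. The concern is unnecessary: for \emph{every} smooth $2$-knot $S\subset S^4$, the double branched cover is a $\Z_2$-homology $4$-sphere, so $b_2(\Sigma_2(S);\R)=0$ and hence $b^+=0$ automatically. One clean way to see this: if $A=H_1(E_\infty;\Z)$ is the Alexander module of the knot complement, then $H_1(\Sigma_2;\Z)\cong A/(t+1)A$, which is a finitely generated abelian group since it is finitely generated over $\Lambda/(t+1)\cong\Z$. The map $(t-1)$ acts as an automorphism of $A$ (this is equivalent to $H_1(S^4\setminus S;\Z)=\Z$), and over $\Z_2$ one has $t+1=t-1$; hence $\bigl(A/(t+1)A\bigr)\otimes\Z_2=0$, forcing $H_1(\Sigma_2;\Z)$ to be finite of odd order. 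Together with $\chi(\Sigma_2)=2$ and Poincar\'e duality this yields $H_2(\Sigma_2;\R)=0$. There is thus no need to restrict to a subclass of $2$-knots, and your Seifert-hypersurface approach, while valid, is more work than required.
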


\begin{proof}
    One can easily verify the conditions on $[S]$. The double branched cover of them are a $\Z_2$-homology $S^4$, $-\C P^2$, $S^2\times S^2$, and $S^2\times S^2$ respectively.  
    If the double-branched cover is a $\Z_2$-homology $S^4$ or $S^2 \times S^2$, one can use the unique spin structure on it as $\mathfrak{s}$. If the double-branched cover is  $-\C P^2$, one can use the generator of $H^2(-\C P^2; \Z) = \Z$ as the $\spinc$-structure $\s$. Then the condition~(\ref{add 3}) is satisfied. Since all conditions of an admissible surface are linear with respect to pairwise additions of homology classes and 4-manifolds, admissibility is preserved under the pairwise connected sum. This completes the proof. 
\end{proof}

Let $S$ be a smoothly embedded (possibly un-orientable) closed surface in a closed oriented 4-manifold $X$ with $[S] \equiv 0 \operatorname{mod} 2$ and with a spin$^c$ structure $\fraks_R$ on the double branched cover $\Sigma_2(S)$ along $S$ such that $
    \tau ^* \fraks_R \cong \overline{\fraks}_R$. 

    For such a choice of spin$^c$ structure $\fraks_R$, one can associate anti-complex linear involution $I$ on the spinor bundle compatible with the Clifford multiplication, which satisfies the commutative diagram: 
    \[
  \begin{CD}
     \mathbb
{S} @>{I}>> \mathbb
{S} \\
  @V{p}VV    @V{p}VV \\
     \Sigma(S)   @>{\tau}>>  \Sigma(S)
  \end{CD}
    \]
    where $\mathbb
{S}$ is the spinor bundle of $\fraks_R$ over $\Sigma_2(S)$ and $p : \mathbb{S} \to \Sigma(S)$ denotes the projection.  This is called a \textit{real structure}, and it was shown to exist in \cite[Lemma 2.10]{KMT24i}. 

The lift satisfies 
\[
I ( \rho (X) \cdot \phi(x) ) = \rho (d\tau (X))  \cdot I (\phi (x))
\]
for $x \in \Sigma_2 (S)$, $X \in T_x \Sigma_2 (S)$, $\rho$ is the Clifford multiplication and $\phi$ is a spinor. 
The involution $I$ also acts on $i\Lambda^1_{\Sigma_2(S)}$ by  $-\tau^*$. With respect to these actions, the Seiberg--Witten equations are $I$-equivariant.

In this situation, a key observation to get a 1-parameter family of real structures on mapping tori is the following.

\begin{prop}
\label{family I}

    Let $f: X\to X$ be an orientation-preserving diffeomorphism such that 
\[ f|_{S} = \id_S , 
       \text{ and } \wt{f}^*\fraks_R \cong \fraks_R,
       \]
where $\wt{f} : \Sigma_2 (S) \to  \Sigma_2 (S) $ is the diffeomorphism obtained as the lift of $f$. We further suppose $b_1(\Sigma_2(S)) =0$. 
Then, there is a spin$^c$ bundle map $\wt{\bf f}$ inducing a commutative diagram
\[
  \begin{CD}
     \mathbb{S} @>{\wt{\bf f}}>> \mathbb{S}  \\
  @V{p}VV    @V{p}VV \\
     \Sigma(S)   @>{\wt{f}}>>  \Sigma(S)
  \end{CD}
\]
which commutes with the real involution $I$ on $\mathbb{S}$,     
where $\mathbb
{S}$ is the spinor bundle of $\fraks_R$ over $\Sigma_2(S)$.
\end{prop}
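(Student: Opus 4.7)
My plan is to construct $\widetilde{\mathbf{f}}$ by starting with any spin$^c$ lift of $\widetilde{f}$ and modifying it by a $U(1)$-valued gauge transformation until the result commutes with the real involution $I$. Since $\widetilde{f}^*\fraks_R \cong \fraks_R$, I can pick some complex-linear spin$^c$ bundle isomorphism $F: \mathbb{S} \to \mathbb{S}$ covering $\widetilde{f}$. First I would verify that $\widetilde{f}$ commutes with the branched covering involution $\tau$: since $f|_S = \id$, the lift $\widetilde{f}$ fixes the branch locus pointwise, so $\widetilde{f}\tau$ and $\tau\widetilde{f}$ are two lifts of $f$ that agree on the non-empty branch locus, and hence are equal. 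Using this, $F' := I^{-1} \circ F \circ I$ is a composition of one linear and two anti-linear bundle maps that covers $\tau^{-1} \widetilde{f} \tau = \widetilde{f}$, so it is again a complex-linear spin$^c$ lift of $\widetilde{f}$; one also checks that it intertwines Clifford multiplication via $d\widetilde{f}$.

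Next I would write $F' = u \cdot F$ for a unique continuous $u: \Sigma_2(S) \to U(1)$; a direct unwinding then shows that an adjusted lift $\widetilde{\mathbf{f}} := v F$ commutes with $I$ precisely when $u = v \cdot (v \circ \tau)$. Since $I$ is an involution of $\mathbb{S}$, the conjugation $F \mapsto I^{-1} F I$ is an involution on the set of complex-linear lifts, and a short computation then gives $u \cdot \overline{u \circ \tau} = 1$, i.e., $u$ is $\tau$-invariant.

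To solve $u = v \cdot (v \circ \tau)$, I would invoke the hypothesis $b_1(\Sigma_2(S)) = 0$ together with the fact that $H^1$ of a closed manifold is torsion-free to conclude $H^1(\Sigma_2(S); \mathbb{Z}) = 0$. Then $\Map(\Sigma_2(S), U(1))$ is connected, so I can lift $u$ to a continuous $h: \Sigma_2(S) \to \mathbb{R}$ with $u = e^{2\pi i h}$. The $\tau$-invariance of $u$ forces $h \circ \tau - h$ to be a locally constant integer-valued function on $\Sigma_2(S)$; using that the branch locus is non-empty and pointwise $\tau$-fixed, this constant must vanish on every connected component meeting the branch locus, so (assuming $\Sigma_2(S)$ is connected, which holds in the cases of interest) I would conclude $h \circ \tau = h$. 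Setting $v := e^{\pi i h}$ then yields $v \cdot (v \circ \tau) = e^{2\pi i h} = u$, and $\widetilde{\mathbf{f}} := v F$ is the desired $I$-equivariant spin$^c$ bundle lift of $\widetilde{f}$. The main subtlety is that the cocycle equation $u = v \cdot (v \circ \tau)$ is a priori obstructed by a $\mathbb{Z}/2$-class (the parity of the constant $h \circ \tau - h$), and both the topological input $b_1(\Sigma_2(S)) = 0$ and the geometric input from $\tau$-fixed points on the branch locus are essential to kill this obstruction.
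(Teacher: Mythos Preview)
Your argument is correct and follows essentially the same route as the paper: pick an arbitrary spin$^c$ lift, measure its failure to commute with $I$ by a gauge transformation $u$, and then use $b_1(\Sigma_2(S))=0$ to extract a square root of $u$ that corrects the lift. The only cosmetic difference is that the paper conjugates $I$ by the lift (invoking a cited uniqueness lemma for real structures to produce $u$) rather than conjugating the lift by $I$ as you do; in fact you are more explicit than the paper about why the square root can be chosen $\tau$-invariantly via the $\tau$-fixed branch locus, which is precisely what is needed for the paper's identity $\sqrt{u}\,I\,\sqrt{u}^{-1}=uI$ to hold.
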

\begin{proof}
In \cite[Lemma 2.7]{KMiT21}(See also \cite{JLi22}), the following uniqueness result is proven:  
If we have two real involutions $I$ and $I'$ of $S$ which cover the branched involutions, there is a gauge transformation $u : \Sigma_2(S) \to U(1) $ on $\Sigma_2(S)$ such that 
\[
 u I =  I' . 
\]

First, we take an arbitrary spin$^c$ lift of $\wt{f}$ to the spinor bundle $\wt{\bf f}':  \mathbb{S} \to  \mathbb{S} $. Then, for a given real structure $I$, we consider the real structure $\wt{\bf f}' \circ I \circ  (\wt{\bf f}')^{-1}$. By the uniqueness of real structures, we see there is a gauge transformation $u$ on $\Sigma_2(S)$ such that $
 u I =\sqrt{u} I  \sqrt{u}^{-1}  = \wt{\bf f}'\circ I  \circ (\wt{\bf f}')^{-1}$. Since $b_1(\Sigma_2(S))=0$, one can take a gauge transformation $\sqrt{u}: \Sigma_2(S) \to S^1$ such that $\sqrt{u}^2 =u$.
 (Precisely, if we have $u= \exp(i g(x))$, then $\sqrt{u} := \exp(i g(x)/2)$.)
Then, we get the desired lift as  
\[
\wt{\bf f} := \sqrt{u}^{-1} \circ \wt{\bf f}' : \mathbb{S}  \to \mathbb{S} . 
\]
Indeed, we have 
\[
 \sqrt{u} I  \sqrt{u}^{-1}  = u  I  = \wt{\bf f}'\circ I  \circ (\wt{\bf f}')^{-1}. 
\]
Thus 
\[
 I \circ  \sqrt{u}^{-1} \wt{\bf f}'  = \sqrt{u}^{-1} \wt{\bf f}'\circ I
\]
holds. 
This completes the proof. 
\end{proof}

Now, we state a 1-parameter family version of \cite[Theorem 1.1]{KMiT21} for double-branched covers using the existence of a family of real spin$^c$ structures verified in \cref{family I}.

\begin{theo}\label{thm: real S1}

Let $S$ be a smoothly embedded surface in a closed oriented 4-manifold $W$ whose $\Z_2$-homology class is zero.   Fix a spin$^c$ structure $\fraks_R$ on $\Sigma_2(S)$ such that $\tau^* \fraks_R \cong \overline{\fraks}_R$, where $\tau: \Sigma_2(S) \to \Sigma_2(S)$ is the covering involution. 
 Suppose $\dim H^+(\Sigma_2(S); \R)^{-\tau^*}=1$.  
We further suppose that there is an orientation-preserving diffeomorphism $f : W \to W$ such that 
 $f(S) = S$ and $f|_{S} = \id_S$, so that it induces a $\Z_2$-equivariant diffeomorphism 
\[
\wt{f} : \Sigma_2(S   ) \to \Sigma_2(S )
\]
with respect to the covering involution. 
Suppose that
\[
\wt{f}^* \fraks_R \cong {\fraks}_R,
\]
and that the diffeomorphism $\wt{f}$ acts on \[
H^+(\Sigma_2(S ); \R)^{-\tau^*} \cong \R 
\]
as an orientation-reversing automorphism of a vector space. 
Then we have 
\[
c_1(\fraks_R)^2-\sigma(\Sigma_2(S))\leq 0.
\]
\end{theo}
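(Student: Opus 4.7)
My plan is to translate Baraglia's $S^1$-family Donaldson diagonalization argument, namely the one underlying \cref{strong_cork}, into the real Seiberg--Witten setting, using the family spin$^c$-bundle lift provided by \cref{family I}.

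First, I would form the family. Take the mapping torus $\pi: E \to S^1$ of $\widetilde f: \Sigma_2(S) \to \Sigma_2(S)$, a smooth bundle of $4$-manifolds over $S^1$ carrying a fiberwise branched-covering involution $\tau$. The hypothesis $\widetilde f^* \fraks_R \cong \fraks_R$ together with \cref{family I} yield a spinor bundle lift $\widetilde{\bf f}$ of $\widetilde f$ commuting with the fiberwise real structure $I$; using $\widetilde{\bf f}$ as clutching data, the pair $(\fraks_R, I)$ globalizes to a family real spin$^c$ structure on $E$. At the same time, the fiberwise $-\tau^*$-eigenspaces $H^+(\Sigma_2(S);\R)^{-\tau^*}\cong \R$ assemble into a real line bundle $\gamma \to S^1$, and the assumption that $\widetilde f$ reverses orientation on this one-dimensional space is precisely the statement that $\gamma$ is the nontrivial M\"obius bundle, so $w_1(\gamma) \neq 0$.

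Next, I would run the real Seiberg--Witten finite-dimensional approximation, as developed in \cite{KMiT21,JLi22,Ka22}, fiberwise. The compatibility of $\widetilde{\bf f}$ with $I$ means the real slice of the configuration space assembles into a bundle over $S^1$, and the Seiberg--Witten map there yields a $\Z_2$-equivariant (for the residual gauge $\{\pm 1\}$) fiberwise pointed map of Thom spaces
\[
\mu : \mathrm{Th}(U) \longrightarrow \mathrm{Th}(U' \oplus \gamma)
\]
over $S^1$, where $U, U' \to S^1$ are finite-rank real vector bundles whose virtual difference is a stable approximation of the real Dirac index bundle. By Atiyah--Singer, $\mathrm{rank}(U) - \mathrm{rank}(U')$ is a positive universal multiple of $c_1(\fraks_R)^2 - \sigma(\Sigma_2(S))$.

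Finally, I would invoke a cohomotopy obstruction. Suppose for contradiction that $c_1(\fraks_R)^2 - \sigma(\Sigma_2(S)) > 0$; then $U \ominus U'$ has strictly positive virtual rank, and $\mu$ becomes a $\Z_2$-equivariant stable map from the Thom space of a trivial bundle into a Thom space whose mod-$2$ Thom class carries the nontrivial class $w_1(\gamma)$. Pulling back this Thom class through $\mu$, in exactly the Steenrod/cohomotopy style Baraglia uses to prove \cref{strong_cork}, produces a contradiction, forcing $c_1(\fraks_R)^2 - \sigma(\Sigma_2(S)) \leq 0$. The hardest step of this plan is this last one: one must carefully package the residual $\Z_2$-gauge action, the antilinear real structure $I$, and the M\"obius twist coming from the $S^1$-family parameter into a single coherent equivariant cohomotopy framework, and verify that Baraglia's obstruction transfers intact to the real Seiberg--Witten setting.
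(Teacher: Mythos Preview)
Your proposal is correct and follows essentially the same route as the paper: form the mapping torus with its family real spin$^c$ structure via \cref{family I}, build the $\Z_2$-equivariant (residual gauge $\{\pm 1\}$) Bauer--Furuta map over $S^1$, identify the $H^+$-cokernel line bundle as the M\"obius bundle via the $H^+$-reversing hypothesis, and run the equivariant-cohomology Thom-class argument of Baraglia (as adapted in \cite{KN23}) to force $c_1(\fraks_R)^2-\sigma(\Sigma_2(S))\leq 0$. The paper's presentation differs only cosmetically in that it first constructs the full $O(2)$-equivariant family Bauer--Furuta invariant on $\Sigma_2(S)$ and then passes to the $I$-fixed part, whereas you work directly in the real slice; the resulting $\Z_2$-equivariant map and the obstruction argument are the same.
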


 \cref{thm: real S1} can be regarded as a non-free analog of a family and $Pin^-(2)$-version of the diagonalization theorem given in \cite{KN23}.

\begin{proof}[Proof of \cref{thm: real S1}]
From \cref{family I}, we can take a lift $\wt{\bf f} : \mathbb{S} \to \mathbb{S} $ of $\wt{f}$, which forms a family of real spin$^c$ structures over $S^1$: 
\begin{align}\label{family I1}
(\Sigma_2(S), \mathbb{S}, I) \to \mathbb{E} \to S^1
\end{align}
defined as the mapping torus of $(\wt{f}, \wt{\bf f})$ with fiber $(\Sigma_2(S), \mathbb{S}, I)$.
We can equip a fiberwise Riemannian metric with the underlying smooth fiber bundle with fiber $\Sigma_2(S)$ over the base $S^1$. 
First, we forget the real involution $I$. Then from \cite[Proposition 2.20]{BK19}, the family of spin$^c$ structures defines a family $S^1$-equivariant Bauer--Furuta invariant 
\[
{\bf BF}_{\Sigma_2(S), \fraks_R} : Th (E) \to Th (F), 
\]
where $E$ and $F$ are finite-rank vector bundles over $S^1$ having the following decompositions: 
\begin{itemize}
    \item $E$ and $F$ have decompositions $E \cong E(\R) \oplus E(\C )$ and $F \cong F(\R) \oplus F(\C )$ into real and complex vector bundles over $S^1$ and $Th$ denotes the Thom space, 
    \item $\dim F(\R) - \dim E(\R) = b^+(\Sigma_2(S))$,
    \item $\dim_\C E(\C) - \dim_\C F(\C) = \frac{1}{8} (c_1 (\fraks_R)^2 - \sigma(\Sigma_2(S)))$, 
    \item $S^1$ acts on $E(\R)$ and $F(\R)$ trivially and on $E(\C)$ and $F(\C)$ as fiberwise complex multiplication, 
    \item ${\bf BF}_{\Sigma_2(S), \fraks} $ is an $S^1$-equivariant based continuous map, 
    \item ${\bf BF}_{\Sigma_2(S), \fraks}^{S^1} $ comes from a linear inclusion $E(\R) \hookrightarrow F(\R)$, and
    \item the fiberwise cokernel of the inclusion $E(\R) \hookrightarrow F(\R)$ can be identified with the vector bundle 
    \[
    H^+(\mathbb{E}) = \bigcup_{x\in S^1} H^+ (\mathbb{E}_x; \R) \to S^1. 
    \]
\end{itemize}

From \cref{family I}, as the mapping torus with respect to $\wt{\bf f}$, we get a 1-parameter family of anti-complex linear $I$-actions on the spinor bundle $\mathbb{S}_b$ for each $b \in S^1$
\[
I_ b :\mathbb{S}_b \to \mathbb{S}_b ,
\]
where $\bigcup_{b \in S^1} \mathbb{S}_b \to S^1$ is the fiberwise spinor bundle obtained as the mapping torus of $(\Sigma(S), f, \fraks)$. 
Since the map ${\bf BF}_{\Sigma_2(S), \fraks} $ is obtained as a finite-dimensional approximation of the Seiberg--Witten equation which is $I_b$-equivariant on each $b \in S^1$, we see the map  ${\bf BF}_{\Sigma_2(S), \fraks} $ is $I = \{ I_b\}$-equivariant as well if we take an $I$-invariant finite-dimensional approximation. Note that the involution $I$ acts on the 1-form part by $-\tau^*$ and the spinor part anti-linearly.
Together with above $S^1$-action, we obtain the family $O(2)$-equivariant Bauer--Furuta invariant written by the same notation
\[
{\bf BF}_{\Sigma_2(S), \fraks} : Th (E) \to Th (F). 
\]
Now, we consider the $I$-invariant part : 
\[
{\bf BF}_{\Sigma_2(S), \fraks}^I  : Th (E^I) \to Th (F^I)
\]
again parametrized by $S^1$, with the following properties: 
\begin{itemize}
    \item $\dim F^I(\R) - \dim E^I(\R) = b^+(\Sigma_2(S))- b^+(X)$,
    \item $\dim_\R E^I(\C) - \dim_\R F^I(\C) = \frac{1}{8} (c_1 (\fraks_R)^2 - \sigma(\Sigma_2(S)))$, 
    \item $\Z_2 = \{ \pm 1\}\subset S^1$ acts on $E^I(\R)$ and $F^I(\R )$ trviailly and $E^I(\C)$ and $F^I(\C )$ as muplications as real vector bundles, 
    \item ${\bf BF}_{\Sigma_2(S), \fraks}^I$ is a $\Z_2= \{\pm 1\}$-equivaraint map,
    \item $({\bf BF}_{\Sigma_2(S), \fraks}^I)^{\{\pm 1\} }$ is induced from a linear inclusion $E(\R)^I \to F(\R)^I$, 
    \item the fiberwise cokernel of the inclusion $E(\R)^I \to F(\R)^I$ can be identified with the real vector bundle
    \[
    H^+_R(\mathbb{E}) := \bigcup_{x \in S^1} H^+ (\mathbb{E}_x; \R )^{-\tau_*}  = H^+(E)^{-\tau^*} \to S^1. 
    \]
\end{itemize}

Let us denote the $I$-invariant part map by 
\[
{\bf BF}_{R}(S,  \fraks, f) :=  {\bf BF}_{\Sigma_2(S), \fraks}^I  : Th (E^I) \to Th (F^I)
\]
and call it the {\it 1-parameter family real Bauer--Furuta invariant}. 
Now, we are ready to prove the result. Consider the following diagram: 
\[
  \begin{CD}
       Th (E^I)    @>{{\bf BF}_{R}(S,  \fraks_R, f)}>> Th (F^I) \\
  @A{i_0}AA    @A{i_1}AA \\
     Th (E^I)^{\Z_2}     @>{{\bf BF}_{R}(S,  \fraks_R, f)^{\Z_2}}>>  Th (F^I)^{\Z_2}  
  \end{CD}
\]
and the induced map on $\Z_2$-equivariant cohomologies: 
\[
  \begin{CD}
       H^*_{\Z_2} (Th (E^I) )    @<{{{\bf BF}_{R}(S,  \fraks_R, f)  }^*}<< H^*_{\Z_2} (Th (F^I)) \\
  @V{i_0^*}VV    @V{i_1^*}VV \\
     H^*_{\Z_2} (Th (E^I)^{\Z_2})      @<<< H^*_{\Z_2} ( Th (F^I)^{\Z_2} ) .
  \end{CD}
\]
Now, we are assuming that $\wt{f}$ acts on $H^+ (\Sigma_2(S); \R )^{-\tau_*} \cong \R$ by $-1$. Thus, we see 
\[
0 \neq w_1 (H^+_R (\mathbb{E})) \in H^1(S^1; \Z_2). 
\]
Then we repeat a cohomological discussion given in \cite[Proof of Theorem 3.1]{KN23} (modelled on \cite{Ba21}, using $S^1$-equivariant cohomology in place of $\Z_2$-equivariant cohomology) to obtain 
\[
c_1^2 (\fraks_R) - \sigma(\Sigma_2(S))\leq 0. 
\]
This completes the proof. 
\end{proof}

\begin{rem}
     As the usual proof of the invariance of the Bauer--Furuta invariant, we expect that (without any added difficulty) the 1-parameter family version of real Bauer--Furuta invariant ${\bf BF}_{R}(S,X,  \fraks, f)$ gives a well-defined invariant of a tuple $( X, S, \fraks, f) $, by passing to a certain stable homotopy setup: 
     \begin{itemize}
         \item a smoothly embedded surface $S$ in $X$,
         \item a real spin$^c$ structure $\fraks$ on the double covering space, and 
        \item  an orientation-preserving diffeomorphism $f : X \to X$ such that $f(S)=S, f|_{S} = \id_S$ and $\wt{f}^* \fraks \cong \fraks$, where $\wt{f}$ is the diffeomorphism on the cover induced from $f$.
     \end{itemize}

\end{rem}

Now, we apply \cref{thm: real S1} to obtain the following obstruction to smooth isotopies: 
\begin{theo}\label{main real}
Let $W$ be a closed oriented smooth 4-manifold and $S$ be an admissible surface in $X$.
 Let $(Y, f)$ be a strong cork detected by spin$^c$ preserving family Seiberg--Witten theory. 
Suppose there is an extension $\wt{f}$ of $f$ to  $C \# W$ as an orientation-preserving diffeomorphism such that $\wt{f}^*= \id$ on $H^* (C \# W; \Z)$ for the contractible bound $C$ of $Y$.  
Then, $\wt{f} (S)$ and $S$ in $C \# W$ are not smoothly isotopic as embeddings rel boundary. 

More strongly, if we take a filling $(Z, f_Z)$ of $(Y, f)$ as $S^1$-family, 
\[
S  \; \text{and} \; (f_Z \# \wt{f}) (S) \subset  Z \cup_Y C \# W
\]
are not smoothly isotopic as embeddings. 

\end{theo}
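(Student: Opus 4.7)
The plan is to prove the stronger statement, namely that $S$ and $F(S):=(f_Z\#\wt f)(S)$ are not smoothly isotopic in $W':=Z\cup_Y (C\#W)$; the first statement follows by extending any relative isotopy by the identity on $Z$. The strategy is to turn a hypothetical smooth isotopy into a self-diffeomorphism of $W'$ fixing $S$ pointwise, and then apply the real family diagonalization theorem (Theorem~\ref{thm: real S1}) to reach a contradiction with the positivity $c_1(\fraks_Z)^2-\sigma(Z)>0$ built into the definition of a cork detected by spin$^c$-preserving family Seiberg--Witten theory.

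\textbf{Step 1 (Reduction to a self-diffeomorphism).} Assume for contradiction that $S$ and $F(S)$ are smoothly isotopic in $W'$. The isotopy extension theorem gives an ambient isotopy $\phi_t:W'\to W'$ with $\phi_0=\id$, $\phi_1(S)=F(S)$, and $\phi_1|_S=F|_S$. Set $g:=F^{-1}\circ\phi_1$; then $g(S)=S$ and $g|_S=\id_S$.

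\textbf{Step 2 (Branched covers and the spin$^c$ class).} Setting $M:=Z\cup_Y C$, which is a closed 4-manifold with $b_1(M)=0$, $b^+(M)=b^+(Z)=1$, we have $W'\cong M\# W$ with $S\subset W$. The standard computation of branched covers of connected sums gives
\[
\Sigma_2(W';S)\ \cong\ \Sigma_2(W;S)\ \#\ M\ \#\ M,
\]
with covering involution $\tau$ acting as the branched-cover involution on the first summand and swapping the two copies of $M$. On $\Sigma_2(W;S)$ take the real spin$^c$ structure $\fraks_R$ from admissibility; extend $\fraks_Z$ over $C$ and place the resulting spin$^c$ structure on one $M$-summand and its conjugate on the other, so that $\tau^*\fraks_R'\cong\overline{\fraks_R'}$ for the combined real spin$^c$ structure $\fraks_R'$. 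By additivity under connected sum together with the admissibility equality $c_1(\fraks_R)^2=\sigma(\Sigma_2(W;S))$,
\[
c_1(\fraks_R')^2-\sigma(\Sigma_2(W';S))\ =\ 2\bigl(c_1(\fraks_Z)^2-\sigma(Z)\bigr)\ >\ 0.
\]

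\textbf{Step 3 (Hypotheses of Theorem~\ref{thm: real S1} and contradiction).} Admissibility forces $H^+(\Sigma_2(W;S))^{-\tau^*}=0$, while the anti-invariant part under the swap on $H^+(M\# M)$ is isomorphic to $H^+(M)\cong H^+(Z)\cong\R$, giving $\dim H^+(\Sigma_2(W';S))^{-\tau^*}=1$. The lift $\wt g$ preserves $\fraks_R'$: indeed $\phi_1$ is isotopic to the identity (hence trivial on isomorphism classes of spin$^c$ structures), $f_Z^*\fraks_Z\cong\fraks_Z$ by assumption on the cork, and $\wt f^*=\id$ on $H^*(C\# W;\Z)$ so that $\wt f$ preserves every spin$^c$ structure on $C\# W$. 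Finally, using $H^*(W')\cong H^*(Z)\oplus H^*(W)$ (as $Y$ is a homology sphere and $C$ is contractible), $F^*$ acts as $f_Z^*\oplus\id$, hence is $-1\oplus\id$ on $H^+(Z)\oplus H^+(W)$; under the natural identification $H^+(\Sigma_2(W';S))^{-\tau^*}\cong H^+(M)\cong H^+(Z)$, the lift $\wt g^*$ acts as $-1$ and hence reverses orientation. Theorem~\ref{thm: real S1} now yields $c_1(\fraks_R')^2-\sigma(\Sigma_2(W';S))\leq 0$, contradicting the strict positivity of Step~2.

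\textbf{Main obstacle.} The subtlety is identifying the action of $\wt g^*$ on the 1-dimensional anti-invariant space. Because $F$ and $\phi_1$ individually move $S$, their lifts naturally live between the distinct (but abstractly diffeomorphic) covers $\Sigma_2(W';S)$ and $\Sigma_2(W';F(S))$. To conclude orientation reversal one must carefully use the isotopy $\phi_t$ to identify these covers, observe that the lift $\wt\phi_1$ of an isotopy from the identity is cohomologically trivial under this identification, and then transfer the computation of $F^*$ on the ambient $H^+$ to the anti-invariant summand of the branched cover via the canonical isomorphism with $H^+(M)$. Once this identification is made precise, the contribution of the cork filling $f_Z$, namely the $H^+$-reversing property, directly produces the required orientation reversal on the anti-invariant part.
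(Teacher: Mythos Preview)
Your proposal is correct and follows essentially the same strategy as the paper: reduce to a self-diffeomorphism fixing $S$, pass to the branched double cover with its connected-sum decomposition $M\#\Sigma_2(W;S)\#M$ and the spin$^c$ structure $\fraks_Z\#\fraks_R\#\overline{\fraks}_Z$, verify the three hypotheses of \cref{thm: real S1}, and derive the contradiction $2(c_1(\fraks_Z)^2-\sigma(Z))\leq 0$. The only cosmetic difference is that you form $g=F^{-1}\circ\phi_1$, whereas the paper composes the ambient isotopy directly with $F$ to obtain a map that is the identity near $S$; these are equivalent. Your explicit discussion of the ``main obstacle'' --- identifying the action of the lift on $H^+(\Sigma_2)^{-\tau^*}$ via lifts of $F$ and $\phi_1$ between the covers $\Sigma_2(W';S)$ and $\Sigma_2(W';F(S))$ --- is exactly the point the paper handles with the phrase ``from the construction'', and your sketched resolution is along the same lines.
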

\begin{proof}[Proof of \cref{main real}]
Note that the second statement for the closed 4-manifold $ Z \cup_Y C \# W$ implies the first statement. Thus, we prove the second. 
From the assumption, we have a filling $(Z, f_Z)$ of the cork $C$ by 
\[
Z \cup C 
\]
with an extension $f_Z$ of the cork action $f$ on $Z$ which preserves a spin$^c$ structure $\fraks_Z$ on $Z$ and reverses the orientation of $H^+(Z; \R) \cong \R$ such that 
\[
c_1^2 (\fraks_Z) - \sigma (Z) >0. 
\]
Now, we consider a pair 
\[
 X := Z \cup_Y C \# W, \quad S \subset  W. 
\]
From the assumption, the diffeomorphism $f$ also extends to $C \# W$ as well, which we denote by $\tilde{f}$. By abusing notation, let us still use $\tilde{f}$ for the diffeomorphism  of $X$ obtained by gluing $f_Z$ and $\tilde{f}$
\[
f_Z \cup \tilde{f} : X \to X. 
\]
Suppose $\wt{f}(S)$ and $S$ are smoothly isotopic in $C \# W$. Then, by composing an ambient isotopy, we can assume that $\wt{f}$ is $\id$ near the neighborhood of $S$ in $X$.
Therefore, we can induce a $\Z_2$-equivariant diffeomorphism with respect to the covering transformation. 
\[
\wt{f}^c : \Sigma_2(S; X) \to \Sigma_2(S; X)
\]
which covers $\wt{f} : X \to X$, where $\Sigma_2(S; X)$ denotes the branched covering space of $X$ along $S$.
Let us denote the covering involution of $\Sigma_2(S; X)$ by $\tau$.
Note that we have the following decomposition: 
\begin{align}\label{decomp}
\Sigma_2(S; X) = \left(Z \cup C\right)    \# \Sigma_2 (S ; W) \#  \left(Z \cup C\right). 
\end{align}
Note that the induced action of the diffeomorphism $\wt{f}^c$ on homology preserves the components $H_*(Z \cup C) \oplus H_*(Z \cup C)$ and $\tau$ flips the components $H_*(Z \cup C) \oplus H_*(Z \cup C)$.

We consider the following spin$^c$ structure $\fraks$ on $\Sigma_2(S; X)$
\begin{align}\label{decomp of spinc}
 \fraks := \fraks_Z \# \fraks_R \# \overline{{\fraks}}_Z, 
\end{align}
where $\fraks_R$ is a spin$^c$ structure on $\Sigma_2 (S ; W)$ obtained from the definition of admissible surface.  Then, we check that the following conditions are satisfied: 
\begin{itemize}
    \item[(i)] $(\wt{f}^c )^*  \fraks \cong  \fraks$ and $\tau^* \fraks \cong \overline{\fraks}$,  
    \item[(ii)] $(H^ + (\Sigma_2(S; X) ) )^{- \tau^*}\cong \R$, 
    \item[(iii)] $(\wt{f}^c )^*$ acts on $(H^ + (\Sigma_2(S; X) ) )^{- \tau^*}\cong \R$ as an orientation-reversing isomorphism of a vector space. 
\end{itemize}

The claim (i) follows from the following discussion. We have
\[
(\wt{f}^c)^* \fraks =(\wt{f}^c)^* (\fraks_Z \# \fraks_R \# \overline{{\fraks}}_Z ) =  (\wt{f}^c)^*\fraks_Z \# (\wt{f}^c)^* \fraks_R \# (\wt{f}^c)^* \overline{{\fraks}}_Z  \cong \fraks_Z \# \fraks_R \# \overline{{\fraks}}_Z,
\]
since $(\wt{f}^c)^*\fraks_Z \cong \fraks_Z$ from the construction and $(\wt{f}^c)^* \fraks_R \cong \fraks_R$ from the assumption so that $f^*$ is $\id$ on $C \# W$. Similarly, we have 
\[
\tau^* \fraks = \tau^* (\fraks_Z \# \fraks_R \# \overline{{\fraks}}_Z ) \cong \overline{{\fraks}}_Z \# \tau^* \fraks_R\# \fraks_Z \cong \overline{\fraks}
\]
since $\tau^* \fraks_R \cong \overline{\fraks}_R$. 
Next, we see (ii). Again from the description \eqref{decomp}, 
\[
b^+ ( \Sigma_2(S; X)) = b^+(\Sigma_2(S; W)) + 2 b^+(Z)= b^+(\Sigma_2(S; W)) + 2. 
\]
From the construction of $X$, one has
\[
b^+ (X) = b^+(W) + 1. 
\]
Therefore, from the assumption \eqref{b+=0} of admissible surface $S$, we have 
\[
\dim (H^ + (\Sigma_2(S; X) ) )^{- \tau^*}) = b^+(\Sigma_2(S; W)) + 2 - ( b^+(W) + 1) = 1 . 
\]
Now, we consider (iii), which describes the action of $\wt{f}^c$ on $H^ + (\Sigma_2(S; X) ) )^{- \tau^*}$. From (ii), $H^ + (\Sigma_2(S; X) ) )^{- \tau^*}$ is one-dimensional and it is represented by the anti-diagonal subspace 
\[
\R \cong \{ (h, h')  \in  H^+(Z ) \oplus H^+(Z)| h= -h'\}  \subset H^ + (\Sigma_2(S; X) ) 
\]
since $\tau$ is the flipping action on $H^+(Z ) \oplus H^+(Z)$. From the construction of $\wt{f}^c$, we see $(\wt{f}^c)^*$ preserves each component of $H^+(Z ) \oplus H^+(Z)$ and acts as multiplication by $-1$.
This proves the desired condition (iii). Now, we apply \cref{thm: real S1} to the data $(X, S, \wt{f}^c) : (\Sigma_2(S),\s) \to (\Sigma_2(S), \fraks)$ to obtain 
\[
c_1(\fraks)^2-\sigma(\Sigma_2(S; X))\leq 0. 
\]
Again from the decomposition \eqref{decomp}, the signature of $\Sigma_2(S; X)$ can be computed as 
\[
\sigma(\Sigma_2(S; X))= 2\sigma(Z) + \sigma ( \Sigma_2(S; W)). 
\]
From \eqref{decomp of spinc}, one can compute 
\[
c_1(\fraks)^2 = c_1(\fraks_Z)^2 + c_1(\fraks_R)^2 + c_1(\overline{\fraks}_Z)^2 = c_1(\fraks_R)^2 + 2 c_1(\fraks_Z)^2. 
\]
Thus, we get 
\begin{align*}
    c_1(\fraks)^2-\sigma(\Sigma_2(S; X)) & = c_1(\fraks_R)^2 + 2 c_1(\fraks_Z)^2 -2\sigma(Z) - \sigma ( \Sigma_2(S; W)) \\
    & = c_1(\fraks_R)^2 - \sigma ( \Sigma_2(S; W)) + 2 (c_1(\fraks_Z)^2 -\sigma(Z))\\
    & = 2 (c_1(\fraks_Z)^2 -\sigma(Z)) >0 
\end{align*}
which is a contradiction. 
Therefore, $\wt{f}(S)$ and $S$ are not smoothly isotopic. This completes the proof. 
\end{proof}

\section{Diffeomorphisms out of corks} \label{section: Diffeomorphisms out of corks}
The goal of this section is to establish that simple effective corks (Definition~\ref{defi: weakly effective cork}) exist. This is a somewhat standard result, although not well-documented in the literature. We will later use the simple corks to construct a diffoemorphism of a suitable closed 4-manifold.
We start with the following:
\begin{prop}\label{prop: mazur_cork}
$(Y,C,\tau)$ be a Mazur type cork. Moreover, suppose that $(Y,C,\tau)$ does not survive stabilization by $S^2 \times S^2$ (or $\mathbb{C}P^2$). Then $(Y,C, \tau)$ is a simple effective cork that is also SW-effective.
\end{prop}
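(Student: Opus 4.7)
The Simple property in Definition~\ref{defi: weakly effective cork}(i) is nothing but the hypothesis that $(Y,C,\tau)$ does not survive 1-stabilization by $S^2\times S^2$ (or $\mathbb{C}P^2$), so it is immediate. Since SW-effectiveness (Definition~\ref{defi:sw_eff_cork}) automatically implies effectiveness — distinct Seiberg--Witten invariants preclude any diffeomorphism between $X$ and $X^\tau$ — it suffices to establish SW-effectiveness.

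The plan is to build a closed simply-connected 4-manifold $X$ with $b^+(X)\geq 2$ containing an embedding $i\colon C\hookrightarrow X$, together with a $\spinc$ structure $\fraks$ of formal dimension zero, such that $\SW(X,\fraks)\neq \SW(X^\tau,\fraks)$ in $\Z/2$. Exploiting the Mazur structure, we describe $C$ by a Kirby diagram with a single dotted circle and a single $0$-framed 2-handle curve algebraically cancelling it; the cork-twist $\tau$ then corresponds to the Akbulut dot--zero swap on $\partial C$. The first step is to attach additional 2-handles (and close off with a 4-handle) to this Kirby diagram so that the resulting closed manifold $X$ is diffeomorphic to a 4-manifold with computable non-vanishing Seiberg--Witten invariant — for example, by arranging $X$ to contain an elliptic surface $E(n)$ with $n\geq 2$, or to be obtained via Fintushel--Stern knot surgery on such. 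The cork-twisted manifold $X^\tau$ then admits a Kirby presentation obtained by performing the dot--zero swap on the $C$-subdiagram of $X$, yielding a manifold homeomorphic but (conjecturally) not diffeomorphic to $X$.

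The main obstacle is verifying that the two Seiberg--Witten invariants genuinely differ. The approach is a neck-stretching/gluing argument across $Y=\partial C$: writing $X = N\cup_Y C$ with $N=\overline{X\setminus C}$, the invariants $\SW(X,\fraks)$ and $\SW(X^\tau,\fraks)$ are assembled from the same pieces $(N,\fraks|_N)$ and $(C,\fraks|_C)$ glued by the two different boundary identifications $\id_Y$ and $\tau$. One chooses $N$ so that the relevant basic classes of $X$ are sensitive to this change of gluing map, the sensitivity being ultimately a reflection of the fact that $\tau$ does not extend smoothly over $C$ (while, consistent with the simple property, it does extend after $S^2\times S^2$-stabilization, so both stabilized invariants vanish). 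The prototype for this construction is Akbulut's original analysis of the Akbulut cork inside an elliptic surface \cite{Ak91_cork}, and the same Kirby-calculus plus Seiberg--Witten strategy adapts to any Mazur cork meeting the simplicity hypothesis; an entirely parallel argument handles the $\mathbb{C}P^2$-stabilization case.
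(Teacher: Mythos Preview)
Your proposal has a genuine gap. You correctly observe that the simple property is the hypothesis and that SW-effectiveness implies effectiveness, so only SW-effectiveness needs to be established. But your argument for SW-effectiveness is a sketch, not a proof: you assert that one can attach handles to an arbitrary Mazur cork so as to land in (say) an elliptic surface with the right basic-class behaviour, and that a neck-stretching argument then shows the invariants differ --- yet you never carry out either step, and neither is routine for a \emph{general} Mazur cork. The sentence ``the sensitivity being ultimately a reflection of the fact that $\tau$ does not extend smoothly over $C$'' is essentially circular: the non-extendability of $\tau$ over $C$ is the definition of a cork, not a mechanism that by itself forces distinct Seiberg--Witten invariants on a closed manifold. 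Going from ``$\tau$ does not extend'' to ``there exists an SW-effective embedding'' is precisely the non-trivial content of the proposition.

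The paper's proof, by contrast, is a direct citation: Akbulut--Yasui \cite{AY_stein_corks} show that any Mazur-type cork admits an effective embedding into a closed symplectic 4-manifold, and moreover that this embedding is SW-effective. Their argument is not the Kirby-calculus-plus-elliptic-surface approach you outline; it proceeds by realizing the cork as a Stein domain, embedding it into a closed symplectic manifold, and then using the non-vanishing of Seiberg--Witten invariants for symplectic manifolds to distinguish $X$ from $X^\tau$. If you want to avoid citing \cite{AY_stein_corks}, you would need to supply a genuine construction of $X$ and a computation (or comparison) of $\SW(X,\fraks)$ and $\SW(X^\tau,\fraks)$ that works uniformly for every Mazur cork, which your outline does not do.
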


\begin{proof}
This follows immediately from the work of Akbulut-Yasui \cite{AY_stein_corks}, who showed that any Mazur type corks admits an effective embedding in a symplectic closed manifold $X$. The SW-effective part of the claim is also shown in \cite{AY_stein_corks}.
\end{proof}

We now give infinitely many examples of corks satisfying the hypothesis of Proposition~\ref{prop: mazur_cork}.
\begin{prop}\label{prop:weak_cork_example}
For any integer $n$, there exist contractible compact smooth 4-manifolds $C_n$ with the prescribed boundary involution $\tau$ that have the following properties:
\begin{enumerate}
\item They are strong corks for any $n$.

\item $(\partial C_n,C_n,\tau)$ is a simple effective (and SW-effective) cork, for stabilization by either $S^2 \times S^2$ or $\mathbb{C}P^2$.
\end{enumerate}
\end{prop}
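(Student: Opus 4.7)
The plan is to construct an infinite family $\{C_n\}$ of Mazur-type contractible 4-manifolds with boundary involutions $\tau$ such that each $C_n$ is a strong cork; the remaining conclusions of (2) will then follow from \cref{prop: mazur_cork}. The key property to verify is the strong cork condition, since every Mazur-type cork is automatically simple (the cork twist dissolves after a single stabilization by standard Kirby calculus), and the Akbulut--Yasui construction supplies an effective (and SW-effective) embedding into a symplectic closed 4-manifold for any Mazur-type cork.

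I would realize $\{C_n\}$ via a family of Kirby diagrams varying in a discrete parameter while retaining the 1-handle/2-handle Mazur-type structure and an appropriate strongly invertible boundary symmetry; explicit such families have appeared in \cite{AY_stein_corks, DHM22, KMT23first}. To verify the strong cork condition, I would apply \cref{strong_cork}: for each $n$, construct an $S^1$-family filling $(X_n, \Phi_n, \mathfrak{s}_n)$ with $b^+(X_n)=1$, $b_1(X_n)=0$, $\Phi_n^*\mathfrak{s}_n \cong \mathfrak{s}_n$, with $\Phi_n$ reversing the orientation of $H^+(X_n)$, and $c_1(\mathfrak{s}_n)^2 - \sigma(X_n) > 0$. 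Mimicking \cref{ex: akbulut cork}, this would be done by exhibiting an equivariant negative-definite cobordism $(W_n, f_n)$ from $(\partial C_n, \tau)$ to a Brieskorn sphere $\Sigma(p_n,q_n,r_n)$ equipped with its complex-conjugation involution, and then capping off by a plumbing 4-manifold whose manifest strongly invertible symmetry reverses the one-dimensional $H^+$. Alternatively, one may directly invoke \cite[Theorem 1.7]{KMT23first}, which already exhibits infinitely many corks satisfying the hypotheses of \cref{strong_cork}.

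The principal technical obstacle will be to control, uniformly in $n$, both the equivariant cobordism to a Brieskorn sphere and the $H^+$-reversing symmetric extension over the plumbing, and to verify that the signature inequality $c_1(\mathfrak{s}_n)^2 > \sigma(X_n)$ persists across the family. This is essentially an exercise in equivariant Kirby calculus combined with Neumann's plumbing calculus, and it goes through provided the Kirby symmetry of $C_n$ is compatibly matched with that of the target plumbing; crucially, no step in the argument requires the use of any exotic smooth structure, so the ingredients are entirely classical.
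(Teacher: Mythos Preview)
Your overall structure---choose a Mazur-type family, verify strong-cork via an equivariant filling, and then invoke \cref{prop: mazur_cork}---is sound, but there is a real gap: the assertion that ``every Mazur-type cork is automatically simple (the cork twist dissolves after a single stabilization by standard Kirby calculus)'' is exactly the nontrivial point, and you have not justified it. \cref{prop: mazur_cork} takes simplicity as a \emph{hypothesis}, not a conclusion; it does not say that Mazur-type corks are simple. The paper accordingly spends the bulk of its proof on this step for a \emph{specific} family $C_n$ (the one from \cite[Theorem~4.2]{AKHMR15}, with $C_0$ the Akbulut cork): for $\C P^2$ it identifies $C_n\#\C P^2$ equivariantly with the trace $X_{+1}(K_n)$ of a strongly invertible slice knot, and for $S^2\times S^2$ it runs the ``stable isotopy'' argument of \cite{AKHMR15} to show the two blowdown spheres $S_n$ and $S_n^\tau$ become isotopic after one $S^2\times S^2$-stabilization. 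Without pinning down the family and carrying out these identifications, your proposal does not establish simplicity for either stabilization.

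On the strong-cork side your route genuinely differs from the paper's. You propose building, for each $n$, an $S^1$-family filling terminating in a Brieskorn sphere and applying \cref{strong_cork}; the paper instead exhibits equivariant negative-definite cobordisms from $(\partial C_n,\tau)$ to $(\partial C_1,\tau)$ and then quotes the involutive Heegaard Floer obstruction of \cite{DHM22}. Your approach is in principle viable, but note that invoking \cite[Theorem~1.7]{KMT23first} alone does not suffice: those corks are strong, but you would still owe a verification of simplicity for that family, which is again the missing step above.
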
 
\begin{proof}
There are several approaches to this. We use techniques from Auckly-Kim-Ruberman-Melvin \cite{AKHMR15}. Let $C_n$ be defined as the Mazur type corks shown in Figure~\ref{fig: family_of_corks} with the symmetry $\tau$ (Note that $(C_0, \tau)$ is in fact the Akbulut cork). First, we observe that $C_n$ does not survive stabilization by $\mathbb{C}P^2$. This is essentially shown in \cite[Theorem 4.2]{AKHMR15}. We include a brief discussion for readers' convenience. In the proof of \cite[Theorem 4.2]{AKHMR15}, it is shown that $C_n \# \mathbb{C}P^2$ is diffeomorphic to the trace of the $(+1)$-surgery along slice knots $K_n$, $X_{+1}(K_n)$, see Figure~\ref{fig: family_of_corks}. The knots $K_n$ are strongly invertible with the strong involution $\tau$ shown in Figure~\ref{fig: family_of_corks}.
\begin{figure}[h!]
\center
\includegraphics[scale=1.0]{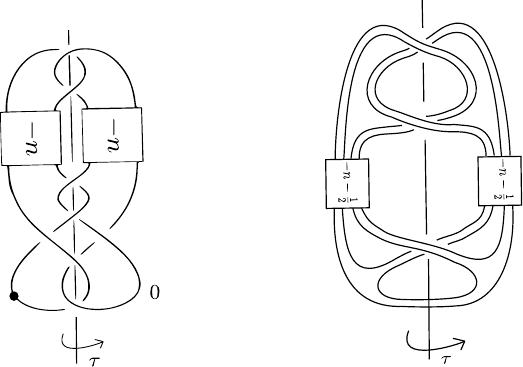}
\caption{\raggedright Left: The corks $(C_n, \tau))$. Right: The knots $K_n$ with the involution $\tau$. The box with label $(-n -1/2)$ represents $(-2n -1)$-many negative half-twists.}\label{fig: family_of_corks}
\end{figure}
 Moreover, the aforementioned identification between  $C_n \# \mathbb{C}P^2$ and $X_{+1}(K_n)$ is equivariant with respect to the involutions on the boundary of $\partial C_n$ and $S^3_{+1}(K_n)$. Since the boundary involution on $S^3_{+1}(K_n)$ extends over  $X_{+1}(K_n)$ (see for example \cite[Section 5]{DHM22}), it follows that $\tau$ extends over $C_n \# \mathbb{C}P^2$. 

The argument for extending $\tau$ on $C_n \# S^2 \times S^2$ is not explicit in the above argument but follows from the techniques developed therein. It was shown in the proof of \cite[Theorem 4.2]{AKHMR15} that the sphere $\mathbb{C}P^1 \subset \mathbb{C}P^2$ corresponds to the sphere $S_n$ obtained by capping off a particular slice disk $D_n$ of $K_n$ with the core of the 2-handle in $X_{+1}(K_n)$, under the identification discussed above. Now by capping off the core of the 2-handle in $X_{+1}(K)$ with the reflected copy of the disks $D_n$, we get another set of spheres $S^{\tau}_n$. It follows that if blowing down $S_n$ in $X_{+1}(K_n)$ results into $C_n$, while blowing down $S^{\tau}_{n}$ results into $C_n$ with the $\partial C_n$ identified by a twist by $\tau$. Hence, in order to show that $\tau$ extends over $C_n \# S^2 \times S^2$, it is enough to show that $S_n$ and $S^{\tau}_{n}$ are isotopic in $X_{+1}(K_n) \# S^2 \times S^2$. To achieve show this we use \textit{stable isotopy} from the proof of \cite[Theorem 4.1]{AKHMR15}. By following the stable isotopy shown in \cite[Section 2]{AKHMR15}, we get that the $S_n$ and $S^{\tau}_n$ are isotopic.
\begin{figure}[h!]
\center
\includegraphics[scale=0.9]{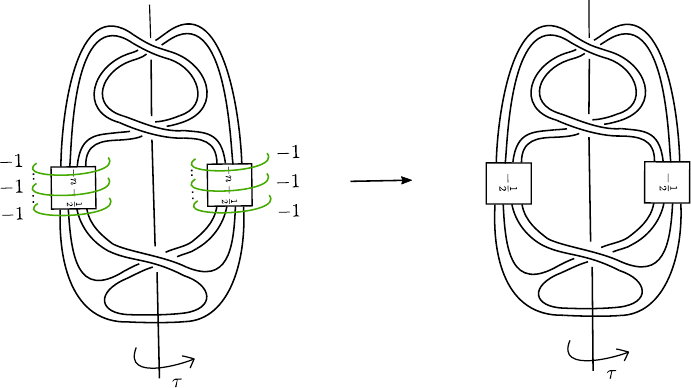}
\caption{The cobordism used in the Proof of Theorem~\ref{prop:weak_cork_example}, similar cobordism was also used in \cite{alfieri2023involutions}.}\label{fig: cork_family_coordism}
\end{figure}

The fact that $(\partial C_n, \tau)$ are strong corks, this is immediate from the arguments from \cite{DHM22}. As shown in Figure~\ref{fig: cork_family_coordism}, there are equivariant negative-definite corbordism from $(\partial C_n, \tau)$ to $(C_1,\tau)$. The invariant defined in \cite[Theorem 1.1]{DHM22} for $(\partial C_n,\tau)$ is strictly negative \cite[Theorem 1.13]{DHM22}. By construction $(C_n,\tau)$ are of Mazur type, hence Proposition~\ref{prop: mazur_cork} completes the proof.
\end{proof}

\section{Proofs of main theorems}
\label{section: Proofs of main theorems}

\subsection{Exotic strongly equivalent surfaces in once stabilized corks}We are now in place to provide proofs of our main theorems. We begin by proving Theorem~\ref{theo: intro sphere from adjunction}, which is broken into a couple of Lemma. We will also need the families adjunction inequality proved by Baraglia \cite{Ba20}.
Let us briefly recall the {\it family Seiberg--Witten invariant} (see such as \cite{Ru98,LL01,BK18} for details): Let $(X,\fraks)$ be a closed smooth spin$^c$ 4-manifold with $b^+(X)\geq3$.
Suppose that $d(\fraks)$, the Seiberg--Witten formal dimension, is $-1$.
Then, for an orientation-preserving diffeomorphism $f : X \to X$ that satisfies $f^\ast\fraks \cong \fraks$, we can define a $\Z/2$-valued invariant $\FSW(X,\fraks,f)$ called the family Seiberg--Witten invariant.

\begin{theo}[{\cite[Theorem 1.2]{Ba20}}]\label{thm: family adjunction}
Let $X$ be an oriented smooth closed 4-manifold with $b^+(X)\geq3$.
Let $\fraks$ be a spin$^c$ structure on $X$ with $d(\fraks)=-1$, and let $f : X \to X$ be a diffeomorphism that preserves orientation and $\fraks$, and assume that $\FSW(X,\fraks,f)\neq0$.
Let $i : \Sigma \hookrightarrow X$ be a closed connected oriented smoothly embedded surface in $X$ with $[\Sigma]^2\geq0$.
Suppose that the embeddings $i: \Sigma \hookrightarrow X$ and $f\circ i : \Sigma \hookrightarrow X$ are smoothly isotopic to each other.
\begin{enumerate}
\item If $g(\Sigma)>0$, we have 
\[
2g(\Sigma)-2 \geq |c_1(\fraks)\cdot [\Sigma]|+[\Sigma]^2.
\]
\item If $g(\Sigma)=0$, $[S]$ is a torsion class.
\end{enumerate}

\end{theo}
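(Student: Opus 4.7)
The plan is to upgrade the classical Kronheimer--Mrowka proof of the adjunction inequality to the $S^1$-parametrized setting. Using the smooth isotopy from $i$ to $f \circ i$, I would first construct an embedded surface bundle $\Sigma \hookrightarrow \mathcal{E} \to S^1$ inside the mapping torus
\[
X_f := (X \times [0,1]) / ((x,0) \sim (f(x),1)).
\]
The hypothesis $\FSW(X,\fraks,f) \neq 0$ guarantees that, for a generic $S^1$-family of metrics $(g_b)_{b \in S^1}$ on the fibers of $X_f$ together with generic perturbations, the parametrized Seiberg--Witten moduli space is a finite set whose mod-$2$ count equals $\FSW(X,\fraks,f)$, and in particular is nonempty.

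Second, for each $b \in S^1$, the surface $\Sigma_b \subset X_b = X$ has a tubular neighborhood $\nu(\Sigma_b)$ which is the disk bundle of Euler number $[\Sigma]^2$. Using the family of embeddings, I would choose the family $(g_b)$ to stretch a long cylindrical neck along $\partial \nu(\Sigma_b)$, producing a one-parameter deformation of families of metrics indexed by the neck length $T \in [0,\infty)$. Since $\FSW$ is invariant under this deformation, the parametrized moduli space remains nonempty for every $T$. A standard compactness argument then produces, in the limit $T \to \infty$, Seiberg--Witten solutions on the pair of ends $X \setminus \nu(\Sigma)$ and $\nu(\Sigma)$, joined by a translation-invariant solution on the cylinder over the circle bundle $\partial \nu(\Sigma)$.

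Third, on the piece $\nu(\Sigma)$ with its cylindrical-end metric, one applies the Weitzenb\"ock identity and curvature estimates exactly as in Kronheimer--Mrowka's original adjunction argument to extract the inequality
\[
2g(\Sigma) - 2 \geq |c_1(\fraks)\cdot [\Sigma]| + [\Sigma]^2,
\]
proving case (1). For case (2) with $g(\Sigma)=0$, if $[\Sigma]$ were non-torsion with $[\Sigma]^2 \geq 0$, one can tube $\Sigma$ with parallel pushoffs to produce a smoothly embedded higher-genus surface representing a positive multiple $k[\Sigma]$, to which the isotopy hypothesis still applies (after suitable refinement of the family). Applying case (1) to these representatives and letting $k$ grow forces $[\Sigma]^2 = 0$ and $c_1(\fraks) \cdot [\Sigma] = 0$; a separate blowup argument, replacing $X$ by $X \# \overline{\C P}^2$ with $\fraks$ modified by $\pm E$, then rules out the remaining non-torsion possibilities, since $[\Sigma] - E$ has negative self-intersection and the pairing with the new $c_1$ can be made nonzero.

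The main obstacle I expect is the compatibility of the family neck-stretching with the mapping torus structure: the family of metrics and the choice of tubular neighborhoods must descend to $X_f$, which in turn requires that the isotopy interpolating $i$ and $f \circ i$ be chosen so that the family of tubular neighborhoods is itself a bundle over $S^1$. Uniqueness of tubular neighborhoods up to ambient isotopy relative to $\Sigma$ should make this routine but requires care. A secondary subtlety is verifying, in the family setting, that no Seiberg--Witten solutions escape to infinity during the neck-stretch, so that nonemptiness of the moduli space is genuinely preserved in the limit.
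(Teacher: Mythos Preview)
This statement is not proved in the paper at all: it is quoted verbatim as \cite[Theorem 1.2]{Ba20} and used as a black box in the proofs of \cref{lem: sphere from adjunction}, \cref{lem: surfaces from adjunction}, \cref{thm: internal_stab_intro}, and \cref{geometric int number}. There is therefore no proof in the paper to compare your proposal against.

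For what it is worth, your outline is the expected one and is essentially what Baraglia carries out in \cite{Ba20}: build the embedded $\Sigma$-bundle inside the mapping torus $X_f$ using the isotopy, stretch the neck along $\partial\nu(\Sigma_b)$ in the $S^1$-family, and feed the limiting solution on $\nu(\Sigma)$ into the Kronheimer--Mrowka curvature estimate. Your worry about making the family of tubular neighborhoods descend to $X_f$ is exactly the point where the isotopy hypothesis is used, and it is indeed handled by uniqueness of tubular neighborhoods up to ambient isotopy. Your treatment of case~(2) is more circuitous than necessary: once the neck-stretching machinery is in place, the genus-zero case follows by the same limiting analysis (or by a single internal stabilization to a torus representing the same class, to which case~(1) applies directly), without needing the blowup maneuver you describe.
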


Firstly, we prove the following Lemma. 
\begin{lem}
\label{lem: sphere from adjunction}
Let $C$ be a simple SW-effective cork and let $S$ be a smoothly embedded 2-sphere in $C\#S^2\times S^2$ whose homology class satisfies $[S]^2 \geq 0$ and $[S]\neq0$.
Then there exist infinitely many smoothly embedded 2-spheres $\{S_i\}_{i=1}^\infty$ in $C\#S^2\times S^2$ that are strongly equivalent to $S$ but relatively exotic to each other.
\end{lem}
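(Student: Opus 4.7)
The plan is to build the infinite family by iterating a single diffeomorphism supported inside the cork, and then apply Baraglia's family adjunction inequality (Theorem 4.7) to rule out smooth isotopy between distinct iterates.

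First, I would exploit the SW-effectiveness of $C$ to fix a closed, simply-connected spin$^c$ 4-manifold $(X,\fraks_X)$ with $b^+(X)\geq 2$, $d(\fraks_X)=0$, and an embedding $C\hookrightarrow X$ with $\SW(X,\fraks_X)\neq \SW(X^\tau,\fraks_X)\in \Z/2$. After stabilizing we view $S\subset C\#S^2\times S^2\subset M:=X\#S^2\times S^2$. Since $C$ is simple, $\tau$ extends over $C\#S^2\times S^2$ as a diffeomorphism $\tilde\tau$, which the construction of \cite{BK18} converts into a self-diffeomorphism $\Phi$ of $M$, compactly supported in the interior of $C\#S^2\times S^2$, whose family Seiberg--Witten invariant $\FSW(M,\fraks,\Phi)$ is nonzero for some spin$^c$ structure $\fraks$ with $d(\fraks)=-1$ (obtained by twisting $\fraks_X$ against the stabilizing $S^2\times S^2$).

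Next, I define $S_i:=\Phi^i(S)\subset C\#S^2\times S^2$ for $i\ge 0$. Because $\Phi$ is supported away from $\partial C$, the restriction $\Phi^i|_{C\#S^2\times S^2}$ fixes $\partial C$ pointwise, so each $S_i$ is strongly equivalent to $S$ via $\Phi^i$. Topologically, $\Phi$ is isotopic to the identity rel boundary (corks are topologically trivial by Freedman \cite{Fr82}), and this isotopy descends to a topological isotopy rel $\partial C$ carrying $S$ to $S_i$. Hence each $S_i$ is topologically isotopic rel boundary to $S$, and the same holds pairwise.

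To obstruct smooth relative isotopy between $S_i$ and $S_j$ for $i\neq j$, note that a smooth isotopy rel $\partial C$ in $C\#S^2\times S^2$ extends by the identity to a smooth isotopy in $M$. Composing with $\Phi^{-\min(i,j)}$ reduces the question to ruling out $\Phi^{k}(S)\simeq S$ for every $k\neq 0$. Invoking \cref{thm: family adjunction} with the diffeomorphism $\Phi^k$, the spin$^c$ structure $\fraks$, and the embedded sphere $S$: since $g(S)=0$ and $[S]\neq 0$ with $[S]^2\geq 0$ (so $[S]$ is non-torsion in the torsion-free lattice $H_2(M;\Z)$), the conclusion of the family adjunction inequality fails, giving the required contradiction.

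The main obstacle here is establishing nonvanishing of $\FSW(M,\fraks,\Phi^k)$ for all $k\neq 0$. I expect to handle this either by a multiplicativity/additivity property of the family Seiberg--Witten invariant under composition of diffeomorphisms (the diffeomorphism $\Phi$ bounds a one-parameter family in the loop space, and concatenation of such loops multiplies invariants), or, failing that, by the more flexible route of letting $\fraks$ vary with $k$ through the infinitely many spin$^c$ structures made available by the $S^2\times S^2$ stabilization, so that each $\Phi^k$ is detected by its own choice of spin$^c$ structure. Either route reduces the obstruction argument to the single-sphere family adjunction bound and yields the desired infinite pairwise exotic family.
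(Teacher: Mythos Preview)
Your overall strategy matches the paper's exactly: set $S_i = \Phi^i(S)$ for a suitable exotic diffeomorphism $\Phi$ supported in the interior of $C\#S^2\times S^2$, then invoke \cref{thm: family adjunction} to rule out smooth isotopy. However, two steps are not yet correct as written.

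First, the construction of $\Phi$. The reference \cite{BK18} provides a gluing formula for family Seiberg--Witten invariants, not a procedure for producing a self-diffeomorphism from $\tilde\tau$. In the paper, $\Phi$ is built explicitly as Ruberman's commutator $\Phi = \phi\circ f^{-1}\circ \phi\circ f$, where $f : X\#S^2\times S^2 \to X^\tau\#S^2\times S^2$ is induced by $\tilde\tau$ and $\phi$ is the double complex conjugation on the $S^2\times S^2$ summand. This specific form is what makes the computation $\FSW(\Phi) = \SW(X^\tau,\fraks)+\SW(X,\fraks)\neq 0$ go through (via Ruberman's additivity plus the gluing of \cite{BK18}), and it also forces $\Phi_\ast=\id$ on $H_2$, since $\phi_\ast=-\id$ is central. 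Your Route~(1) for the iterates is then exactly what is used: additivity gives $\FSW(\Phi^k)=k\cdot\FSW(\Phi)\neq 0$ over~$\Z$. Route~(2) is not needed and would not obviously work.

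Second, the topological isotopy. Freedman's theorem only says $\tau$ extends topologically over $C$; this does \emph{not} imply that your particular $\Phi$ is topologically isotopic to the identity rel $\partial C$. The paper instead uses the homological triviality of $\Phi$ just noted, together with Orson--Powell \cite{OP22}, to conclude that $\Phi$ (hence each $\Phi^k$) is topologically isotopic to the identity rel boundary, which then carries $S$ to $S_i$.
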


\begin{proof}[Proof of Lemma~\ref{lem: sphere from adjunction}]
Let $C$ be a simple effective cork, with the underlying embedding in the closed 4-manifold $X$. Moreover, we connected sum a copy of $S^2 \times S^2$ such that
\[
C \# S^2 \times S^2 \hookrightarrow X \# S^2 \times S^2.
\]
Let $S$ be any smoothly embedded sphere in $C \# S^2 \times S^2$ such that $[S] \in H_{2}(S^2 \times S^2; \mathbb{Z})$ is a non-zero class with $[S]^{2} \geq 0$. 

First, note that definition of a simple SW-effective cork implies that $\tau$ on $\partial C$ extends over $C \# S^2 \times S^2$, there is an induced diffeomorphism
\[
f: X \# S^2 \times S^2 \rightarrow X^{\tau} \# S^2 \times S^2.
\]
where $X^{\tau}$ is given by regluing $C$ by the boundary twist, $X^\tau:= X \setminus C \cup_{\tau} C$. Note that $f$ maps $C \# S^2 \times S^2$ to itself. Moreover, the it also follows from the definition that there is a $\spinc$ structure $\fraks$ of $X$ such that 
\[
\SW(X, \fraks) \neq \SW(X^{\tau}, \fraks) \; \mathrm{mod} \; 2.
\]
We will use these two facts to construct a diffeomorphism
\[
\Phi: X \# S^2 \times S^2 \rightarrow X \# S^2 \times S^2,
\]
as a composition of diffeomorphisms defined as follows: We now consider the diffeomorphism $\phi$ of $S^2 \times S^2$ defined by the product of the complex conjugations 
\begin{align}\label{compconj}
([z_1; z_2], [z'_1; z'_2]) \mapsto ([\overline{z}_1; \overline{z}_2], [\overline{z}'_1; \overline{z}'_2]):  \C P^1\times \C P^1 \to \C P^1\times \C P^1.
\end{align}
We extend $\phi$ to a diffeomorphism of $X \# S^2 \times S^2$ by identity (after isotoping $\phi$ on a 4-disk in $S^2 \times S^2$, so that $\phi$ fixes the disk pointwise). Similarly, we also extend $\phi$ to $X^{\tau} \# S^2 \times S^2$. We continue to call both of these diffeomorphims as $\phi$, by abusing notation. Then $\Phi$ is defined as follows:
\[
\Phi:= \phi \circ  f^{-1} \circ \phi \circ f.
\]
Importantly, note that $\Phi$ maps $C \# S^2 \times S^2$ to itself. 

Let $\frakt$ be the spin structure on $S^2\times S^2$ and set $\fraks' = \fraks\#\frakt$, which is a spin$^c$ structure on $X\#S^2\times S^2$.
Now following a standard argument in family Seiberg--Witten theory, having roots in Ruberman's work \cite{Ru98}, we show that
\[
\mathrm{FSW}(X \# S^2 \times S^2, \mathfrak{s}', \Phi) \neq 0.
\]
Indeed, we have
\begin{align*}
\FSW(X \# S^2 \times S^2,\fraks,\Phi)
&= \FSW(X \# S^2 \times S^2,\fraks,f^{-1} \circ \phi \circ f)
+ \FSW(X \# S^2 \times S^2,\fraks,\phi)\\
&= \FSW(X^\tau \#S^2\times S^2,\fraks'\#\frakt, \phi)
+ \FSW(X \# S^2 \times S^2,\fraks,\phi)\\
&= \SW(X^\tau,\fraks')
+ \SW(X,\fraks')
= 1+0=1
\end{align*}
in $\Z/2$, where in the first line we have used \cite[Lemma 2.6]{Ru98} for the additivity of the family Seiberg--Witten invariant, in the second line we have used the conjugation invariance and in the third line we used \cite[Theorem 9.5]{BK18}. Now we claim that $S$ and $\Phi^{k}(S)$ satisfy the following conditions (for any integer $k$):

\begin{enumerate}
 \setlength\itemsep{1em}
\item $S$ and $\Phi^{k}(S)$ are topologically isotopic relative in $C \# S^2 \times S^2$, relative to the boundary.

\item There is a diffeomorohism 
\[
g_k: C \# S^2 \times S^2 \rightarrow C \# S^2 \times S^2
\]
that sends $S$ to $\Phi^{k}(S)$. Moreover, $g_k|_{\partial}= \mathrm{id}$ and 
\[
(g_{k})_{*}=\mathrm{id}: H_2(C \# S^2 \times S^2, \partial) \rightarrow H_2(C \# S^2 \times S^2, \partial).
\]
\end{enumerate}

Indeed, if the aforementioned conditions are met, it follows from Definition~\ref{defi:strong_weak_equi_boundary} that $S$ and $\Phi^{k}(S)$ are strongly equivalent surfaces in $C \# S^2 \times S^2$, for positive integer $k$. Indeed, this follows immediately by  taking $g_k:= \Phi^{k}$. As discussed before $\Phi^{k}$ on $X \# S^2 \times S^2$ induces a map that is supported in $C \# S^2 \times S^2$. Hence, $\Phi$ is a diffeomorphism of $C \# S^2 \times S^2$. Moreover, $\Phi^{k}$ acts an identity on the $H_2(C \# S^2 \times S^2, \partial)$ by definition, and hence it follows from \cite{OP22} that $S$ and $\Phi^{k}(S)$ are topologically isotopic relative to the boundary. It remains to check that $S$ and $\Phi^{k}(S)$ are not smoothly isotopic to each other (relative to the boundary). 

Towards contradiction, suppose that they are. 
By the definition of the SW-effectiveness, $X$ is simply-connected hence $[S]\neq0$ is not a torsion class. This is a contradiction by Theorem~\ref{thm: family adjunction}, (2).
This gives the required infinite family of strongly equivalent surfaces $\{ S_i \}$ that are relatively exotic to each other. 

It follows from Proposition~\ref{prop:weak_cork_example} that there are infinitely many corks $C$ that admit an simple SW-effective embedding.
 \end{proof} 
 
We now move on to proving the higher genus version of Lemma~\ref{lem: sphere from adjunction}.

\begin{lem}\label{lem: surfaces from adjunction}
Let $C$ be a simple SW-effective cork and let $S$ be a oriented closed surface smoothly embedded in $C\#S^2\times S^2$.
If $g(S)>0$ and $2g(S)-2<[S]^2$, then there exist infinitely many smoothly embedded surfaces $\{S_i\}_{i=1}^\infty$ in $C\#S^2\times S^2$ that are strongly equivalent to $S$ but relatively exotic to each other.
\end{lem}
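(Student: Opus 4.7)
The plan is to reuse essentially verbatim the construction in the proof of \cref{lem: sphere from adjunction}, with the only substantive change being that the final contradiction is extracted from part~(1) of the family adjunction inequality (\cref{thm: family adjunction}) rather than part~(2). Let $C \hookrightarrow X$ be an SW-effective embedding witnessed by a spin$^{c}$ structure $\fraks$ on $X$, let $f : X \# S^{2}\times S^{2} \to X^{\tau} \# S^{2}\times S^{2}$ be the diffeomorphism produced from the extension of the cork twist after stabilization, and let $\phi$ be the product of complex conjugations on $S^{2} \times S^{2}$, extended by the identity over $X$ and $X^{\tau}$. Set
\[
\Phi := \phi \circ f^{-1} \circ \phi \circ f : X \# S^{2}\times S^{2} \to X \# S^{2}\times S^{2}.
\]
As before, $\Phi$ preserves $C \# S^{2}\times S^{2}$ setwise and restricts to the identity on the boundary.

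Next I would repeat the Ruberman-style additivity computation verbatim to conclude
\[
\FSW(X \# S^{2}\times S^{2}, \fraks \# \frakt, \Phi) = 1 \in \Z/2,
\]
where $\frakt$ is the spin structure on $S^{2} \times S^{2}$. Writing $\fraks' := \fraks \# \frakt$, note that $\Phi^{\ast}\fraks' \cong \fraks'$ and $\Phi$ preserves a homology orientation, so iterating and applying the gluing/composition formula also gives $\FSW(X \# S^{2}\times S^{2}, \fraks', \Phi^{k}) \neq 0$ for every $k \neq 0$ (or, equivalently, it suffices to consider $\Phi$ itself together with a relabelling argument: if some $\Phi^{k}$ smoothly fixes $S$ up to isotopy, we may apply the inequality directly to $\Phi$ acting on $\Phi^{j}(S)$ for some $j$).

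Now define $S_{i} := \Phi^{i}(S)$ for $i \geq 0$. Since $\Phi$ acts as the identity on $H_{2}(C \# S^{2} \times S^{2}, \partial)$ and restricts to $\mathrm{id}$ on the boundary, the diffeomorphism $\Phi^{j-i}$ establishes that $S_{i}$ and $S_{j}$ are strongly equivalent, and by Orson--Powell \cite{OP22} each $S_{i}$ is topologically isotopic to $S$ rel boundary. It remains to show that they are pairwise relatively smoothly non-isotopic. Suppose for contradiction that $S_{i}$ and $S_{j}$ are smoothly isotopic rel boundary for some $i < j$; then $S$ and $\Phi^{j-i}(S)$ are smoothly isotopic rel boundary inside $X \# S^{2}\times S^{2}$. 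Applying \cref{thm: family adjunction}(1) with the diffeomorphism $\Phi^{j-i}$ (whose family Seiberg--Witten invariant with respect to $\fraks'$ is nonzero), the embedded surface $S$ (which has $[S]^{2} \geq 2g(S) - 1 > 0$ and $g(S) > 0$), we obtain
\[
2g(S) - 2 \;\geq\; |c_{1}(\fraks') \cdot [S]| + [S]^{2} \;\geq\; [S]^{2},
\]
contradicting the hypothesis $2g(S) - 2 < [S]^{2}$.

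The only step that merits any care beyond the $g = 0$ case is verifying that the family Seiberg--Witten invariant of $\Phi^{k}$ (not merely $\Phi$) is nonzero, so that the adjunction inequality can be invoked for arbitrary pairs $(i,j)$; this will follow from the multiplicativity/additivity behaviour of $\FSW$ under composition together with the fact that $\phi$ and $f$ each individually preserve $\fraks'$ up to isomorphism. Once that is in place, the contradiction with $2g(S) - 2 < [S]^{2}$ is immediate, producing the desired infinite family $\{S_{i}\}_{i=1}^{\infty}$.
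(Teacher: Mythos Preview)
Your proposal is correct and takes essentially the same approach as the paper: the paper's own proof simply refers back to the construction in \cref{lem: sphere from adjunction} and replaces the appeal to part~(2) of \cref{thm: family adjunction} with part~(1), obtaining the contradiction $2g(S)-2 \geq |c_1(\fraks')\cdot[S]| + [S]^2 > 2g(S)-2$. Your write-up is in fact more explicit than the paper's, which does not separately discuss the nonvanishing of $\FSW$ for $\Phi^k$; note that your proposed ``relabelling'' alternative does not actually work, so you should rely on the $\Z$-valued additivity of $\FSW$ (since $\Phi$ is homologically trivial, $\FSW(\Phi^k)=k\cdot\FSW(\Phi)$ in $\Z$, and the hypothesis that $\SW(X,\fraks)$ and $\SW(X^\tau,\fraks)$ differ mod~$2$ forces $\FSW(\Phi)$ to be odd).
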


\begin{proof}[Proof of Lemma~\ref{lem: surfaces from adjunction}]
The proof is similar to that of Lemma~\ref{lem: sphere from adjunction}. In this case, we get the contradiction from the adjunction inequality
\[
2g(S)-2 \geq |c_1(\fraks) \cdot [S]| + S \cdot S > 2g(S) - 2
\]
and Theorem~\ref{thm: family adjunction}, (1).
This completes the proof.

\end{proof}

We are now in place to prove Theorem~\ref{theo: intro sphere from adjunction}.

\begin{proof}[Proof of Theorem~\ref{theo: intro sphere from adjunction}]
Let us write $\alpha = p[S^2 \times \{\ast\}] + q [\{\ast\} \times S^2]$ with $p,q \in \Z$.
Let $S \subset S^2\times S^2$ be a genus-minimizing oriented surface in $\alpha$.
Since the $g(S)=0$ case follows from \cref{lem: sphere from adjunction}, suppose that $g(S)>0$.
Then we have $|p|\geq 1$ and $|q|\geq1$ by \cite[Theorem 1]{Kuga84}, and it follows from \cite[Theorem 1]{LiLi98} that $g(S) = (|p|-1)(|q|-1)$.
This implies $2g(S)-2 < 2pq =[S]^2$, and thus the assertion follows from \cref{lem: surfaces from adjunction}.
\end{proof}

\begin{proof}[Proof of Theorem~\ref{thm: internal_stab_intro}] It was shown in \cite{Ak91_cork}, that there is an embedding of the Akbulut cork $W$ in $E(2) \# \overline{\mathbb{C}P}^2$. For the ease of notation, we will denote $E(2) \# \overline{\mathbb{C}P}^2$ by $X$. There is also an embedding of the Gompf nucleus $N(2)$ in $X$ which is away from $W$. Now from \cite{AY_knotted_corks} we get that regluing $W$ by the boundary twist in $X$ splits off a $S^2 \times S^2$ summand. Let us denote the the cork-twisted $X$ by $X^{\tau}$. Now it follows from \cite{akbulut2002variations, auckly2003families, baykur2013round} that there is a diffeomorphism
\[
\phi_{p}: X^{\tau} \rightarrow X^{\tau}_{2p+1},
\]
where here $X^{\tau}_{2p+1}$ represents 4-manifold obtained from $(2p+1)$-log transform on the specified nucleus inside $X^{\tau}$ for $ p\geq 0$. Let us now define
\[
W_p:= \phi_p^{-1}(W).
\] 
For our construction, We will also need two more diffeomorphisms. First, note that the diffeomorphism $\tau$ of the boundary of $W$, extends over $W \# S^2 \times S^2$. Hence this induces a diffeomorphism
\[
f_1: X \# S^2 \times S^2 \rightarrow X^{\tau} \# S^2 \times S^2.
\] 
Similarly for the embedded copy of twisted $W$ in $X^{\tau}_{2p+1}$, the boundary diffeomorphism extends over $W \# S^2 \times S^2$. Moreover, regluing the twisted $W$, changes $X^{\tau}_{2p+1}$ to $X_{2p+1}$. Hence, as before there is an induced diffeomorphism
\[
f_2 : X^{\tau}_{2p+1} \# S^2 \times S^2 \rightarrow X_{2p+1} \# S^2 \times S^2.
\] 
Here, $X_{2p+1}$ represents the 4-manifold obtained from $X$ by $(2p+1)$-log transforming the distinguished nucleus in $X$. Using these above diffeomorphisms, we will firstly define a diffeomorphism $\phi'_p$. It is defined as a composition  
\[
\phi'_{p}:=f_2 \circ \phi_{p} \circ f_1: X \# S^2 \times S^2 \rightarrow X_{2p+1} \# S^2 \times S^2.
\]
Here by abusing notation we have denoted $\phi_p \# \mathrm{id}: X^{\tau} \# S^2 \times S^2 \rightarrow X^{\tau}_{2p+1} \# S^2 \times S^2$ as $\phi_p$.

We now argue that there is a contractible 4-manifold (with boundary) $C \subset X$ such that 
\[
\phi'^{-1}_{p}(S^2 \times S^2) \subset C \# S^2 \times S^2.
\]
Note that, as discussed in the beginning, there is an embedding of the Akbulut cork $W$ in $X \# S^2 \times S^2$. Moreover, we can consider another embedding of the Akbulut cork $W'_p \subset X \# S^2 \times S^2$, which is given by
\[
W'_{p}:= f^{-1}_1(W_p).
\]
We now claim that after moving both $W$ and $W'_{p}$ by isotopy, we can find a contractible 4-manifold $C \subset X \# S^2 \times S^2$ such that $C$ contains a disjoint embedding of $W$ and $W'_{p}$. To achieve this, we follow the arguments from \cite{MS_higher_corks}. To see this, first observe that both $W$ and $W'_{p}$ are AC-corks, in the language of \cite[Definition 1.1]{MS_higher_corks}. Indeed, they are both Mazur type cork with one 1-handle and one 2-handle. Then, by using an argument as in the Finger Lemma from \cite{MS_higher_corks}, we obtain that there is an embedding of a AC-manifold of type-I \cite[Definition 1.1]{MS_higher_corks}, containing $W \cup W'_{p}$. Then by the encasement lemma from \cite{MS_higher_corks}, there is another AC-manifold $C$ of type-II  $H_2(C)= H_2(W \cup W'_{p})=0$. In particular, $C$ is a contractible manifold, which by construction contains an embedding of $W$ and $W'_{p}$. We claim that for this $C$, we have
\[
\phi'^{-1}_{p}(S^2 \times S^2) \subset C \# S^2 \times S^2.
\]
Indeed, a point $x \in S^2 \times S^2 \subset X_{2p+1} \# S^2 \times S^2$ gets mapped to $W^{\tau} \# S^2 \times S^2 \subset X^{\tau}_{2p+1} \# S^2 \times S^2$ under $f^{-1}_2$, which is then sent to a point in $W_p \# S^2 \times S^2 \subset X^{\tau} \# S^2 \times S^2$. Finally, $f^{-1}_1$ maps that point to a point in $C \# S^2 \times S^2$.

We will use this diffeomorphism $\phi_{p}$ to construct an exotic diffeomorphism of $X \# S^2 \times S^2$. This is done by considering the commutator of $\phi_{p}$ and the reflection map of $S^2 \times S^2$ (which we will call $g$) defined in \eqref{compconj}. That is we define
\[
\Phi_p:= [\phi'_{p},g]: X \# S^2 \times S^2 \rightarrow X \# S^2 \times S^2.
\]
Note that by construction $\Phi_p$ supported on $C \# S^2 \times S^2$. Now let $S \subset S^2 \times S^2 \subset X \# S^2 \times S^2$ be a surface with $S^2 \geq 0$. We claim that $\Phi_{p}(S)$ and $S$ are pairs of strongly equivalent exotic surface in $C \# S^2 \times S^2 \subset X \# S^2 \times S^2$. Moreover, for suitably chosen $p$, $\Phi_{p}(S)$ and $S$  remain strongly equivalent exotic pairs after $n$-many internal stabilization by $T^2$. This part of the argument is motivated from \cite{Au23}.

First, let us fix a basis of the intersection form of $X \# S^2 \times S^2$ which is isomorphic to $2(+1) \oplus 19 (-1) \oplus 2 H$, by $\{ S_i \}^{2}_{i=1} \oplus \{ R \}^{19}_{i=1} \oplus \{ P_0, Q_0, P_1, Q_1 \}$, where we think of $P_0$, and $Q_0$ as generators of hyperbolic summand coming from $S^2 \times S^2$, and $P_1$, $Q_1$ represents the other hyperbolic summand. In particular, we have 
\[
P_{i}^2=Q_{i}^2=0,\ P_i.Q_i=1.
\]
Furthermore, let us assume that the $\sigma$ and $\delta$ respectively denote the section and the fiber class of the nucleus $N(2) \subset X \# S^2 \times S^2$. We represent the homology class of $(\sigma + \delta)$, and $\delta$ $P_1$ and $Q_1$ respectively. Going forward, we divide the argument into two cases; let us first assume that $S$ is not a characteristic class. Then it follows from \cite[Lemma 4.1]{Au23}, together with a result of Wall \cite{Wa64} that there is a diffeomorphism of $X \# S^2 \times S^2$ that realizes a automorphism of the intersection form which sends $S$ to 
\[
u v  R_1 + v P_1 + vw Q_1 
\] 
where $u, v, w$ are integers with $v > 0$ and $0 \leq u \leq w$. Let us denote this diffeomorphism by $g$. Now note that if $S$ and $\Phi_p S$ are smoothly isotopic then so are $g(S)$ and $g(\Phi_p(S))$. The same observation also applies to the stabilized surfaces. Hence, it is enough to show that $g(S)$ and $g(\Phi_p(S))$ are not smoothly isotopic, nor are the stabilized copies of them (for any number of stabilization $\leq n$).

To show this, we apply the family adjunction inequality (Theorem~\ref{thm: family adjunction}). First, note that there is a Seiberg--Witten basic class $K$ for $X$ induced from the canonical basic class of $E(2)$. Now since both classes $(\sigma + \delta)$ and $\delta$ are represented torus, by applying the regular adjunction inequality to $(X, K)$, we get
\[
K.(\sigma + \delta)=K.\delta=0.
\]
Moreover, from the adjunction inequality, we also get
\[
\SW(X, K + 2i\delta)=0, \; \mathrm{for} \; i >0.
\]
By a result of Wall \cite{Wa64}, we can assume that $\phi'_p$ on homology splits as a direct summand $\psi^{'}_p \oplus \psi^{''}_p: H^{2}(X_{2p+1}) \oplus H^{2}(S^2 \times S^2) \rightarrow H^{2}(X) \oplus H^{2}(S^2 \times S^2)$, where both components are isomorphisms. Moreover, we can think of $\psi^{'}_p$ being induced from the isomorphism $H^{2}(N(2)_{2p+1}) \rightarrow H^{2}(N(2))$. Now let us define a $\spinc$-structure $\s_p$ on $X_{2p+1} \# S^2 \times S^2$ by
\[
\s_p:= {\psi'}^{-1}_{p}({K + 2p\delta)}.
\]
It follows from the work of Morgan-Mrowka-Szab{\'o} \cite{morgan1997product} that $\s_p$ is a mod 2 basic class for $X_{2p+1} \# S^2 \times S^2$. Now an argument similar to that in the proof of Theorem~\ref{theo: intro sphere from adjunction}, implies that 
\[
\FSW(X \# S^2 \times S^2, K + 2p \delta, g \circ \Phi_p \circ g^{-1})= \FSW(X \# S^2 \times S^2, g \circ \Phi_p \circ g^{-1}) \neq 0.
\]
Now towards contradiction, suppose that after $n'$ (with $0 \leq n' \leq n$) many internal stabilization of $g(S)$ and $g(\Phi_p(S))$, they become smoothly isotopic in $X \# S^2 \times S^2$. Then by family adjunction inequality (Theorem~\ref{thm: family adjunction}), we get
\begin{align*}
2(n + g(S)) -2 \geq 2(n' + g(S)) - 2 \geq (u v  R_1 + v P_1 + vw Q_1).(u v  R_1 + v P_1 + vw Q_1)\\
 + |(K+ 2p Q_1).(u v  R_1 + v P_1 + vw Q_1)|.
\end{align*}
A straight forward computation (using $P_1$ and $Q_1$ are basis of a hyperbolic summand), we get that if
\[
2vp >  2(n + g(S)) -2 - 2 v^2 w + v^2 u + vu|K.R_1|  
\]
there is a contradiction, as desired. The proof for the case when $S$ is characteristic is quite similar, we refer readers to the proof of \cite[Theorem 1.5]{Au23}. The proof that there is a topological isotopy between the surfaces in $C \# S^2 \times S^2$ follows from the work of \cite{OP22}.
  
\end{proof}

\subsection{Equivalent exotic surfaces from non-effective corks}

We now move on to the proof of Theorem~\ref{intro: s1_family_surface_closed}. We will need one more ingredient for this proof, which stems from \cite{KT22, KMT23} manifesting the form of family diagonalization theorem over $T^2$.
For a 4-manifold $X$ with boundary, let  $\Diff(X,\del)$ denote the group of diffeomorphisms which are the identity near the boundary. 

\begin{theo}
\label{thm: rel T2}
Let $X$ be a compact, oriented, smooth 4-manifold with boundary and with $b^+(X)=2$ and $b_1(X)=0$.
Set $Y = \del X$ and suppose that $Y$ is a rational homology 3-sphere.
Let $\fraks$ be a spin$^c$ structure on $X$.
Suppose that there are diffeomorphisms $f_1, f_2 \in \Diff(X,\del)$ such that $[f_1, f_2]$ is isotopic to the identity through $\Diff(X,\del)$, 
and $f_1^\ast, f_2^\ast$ act on $H^+(X)$ as $\diag(-1,1), \diag(1,-1)$ for some choice of basis of $H^+(X)$, and $f_i^\ast \fraks \cong \fraks$.
Then  we have
\[
\frac{c_1(\fraks)^2-\sigma(X)}{8} \leq \delta(Y,\fraks|_Y), 
\]
where $\delta(Y,\fraks|_Y)$ is the monopole Fr\o yshov invariant. 
\end{theo}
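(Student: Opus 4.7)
Since $[f_1,f_2]$ is isotopic to the identity through $\Diff(X,\del)$, the pair $(f_1,f_2)$ defines a principal homotopy into $B\Diff(X,\del)$ from $T^2$, hence a smooth $X$-bundle $\pi:\mathcal{X}\to T^2$ with boundary family $T^2\times Y\to T^2$ (trivial, as $f_i$ are the identity near $\del X$), with monodromies $f_1,f_2$ around the two generating loops of $T^2$. Because $f_i^\ast\fraks\cong\fraks$, the spin$^c$ structure $\fraks$ extends to a fiberwise spin$^c$ structure $\underline{\fraks}$ on $\mathcal{X}$. The hypothesis on $f_1^\ast,f_2^\ast$ acting on $H^+(X)\cong\R^2$ as $\diag(-1,1),\diag(1,-1)$ gives a splitting of the real rank-$2$ vector bundle
\[
H^+(\mathcal{X})\;=\;\bigcup_{b\in T^2} H^+(\mathcal{X}_b;\R)\;\cong\;L_1\oplus L_2,
\]
where $L_i\to T^2$ is the real line bundle whose monodromy is $-1$ around the $i$-th generator and $+1$ around the other. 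In particular $w_1(L_1),w_1(L_2)$ are the two generators of $H^1(T^2;\F_2)$, so $w_1(L_1)\cup w_1(L_2)$ is the $\F_2$-fundamental class of $T^2$.

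\textbf{Families relative Bauer--Furuta and the Floer spectrum.} Apply the $T^2$-parameterized Bauer--Furuta construction for 4-manifolds with boundary, as in \cite{KMT23} (extending \cite{BK18,Ba21} to the relative case via the Kronheimer--Manolescu Floer spectrum). Finite-dimensional approximation of the fiberwise Seiberg--Witten equations on $(\mathcal{X},\underline{\fraks})$ produces an $S^1$-equivariant map of parameterized Thom spectra over $T^2$,
\[
\mathbf{BF}_{\mathcal{X},\underline{\fraks}}\;:\;Th(E)\;\longrightarrow\;Th(F)\wedge\mathrm{SWF}(Y,\fraks|_Y),
\]
with $S^1$-equivariant real vector bundles $E,F\to T^2$ satisfying $E\cong E(\R)\oplus E(\C)$, $F\cong F(\R)\oplus F(\C)$, the fiberwise cokernel of $E(\R)\hookrightarrow F(\R)$ canonically identified with $L_1\oplus L_2$, and
\[
\dim_\C E(\C)-\dim_\C F(\C)\;=\;\tfrac{1}{8}\bigl(c_1(\fraks)^2-\sigma(X)\bigr)-n(Y,\fraks|_Y),
\]
where $n(Y,\fraks|_Y)$ is the standard correction term so that the $S^1$-equivariant homotopy degree of $\mathrm{SWF}(Y,\fraks|_Y)$ is controlled by $\delta(Y,\fraks|_Y)$ (this is exactly the input through which the Fr\o yshov invariant of $(Y,\fraks|_Y)$ enters the count).

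\textbf{Cohomological comparison.} Run the $\F_2$-cohomology argument of \cite[proof of Theorem 3.1]{KN23} (modeled on \cite{Ba21} and employed in \cref{thm: real S1} above), now in $S^1$-equivariant cohomology over the base $T^2$. Write $m=\rank_\C E(\C)$, $n=\rank_\C F(\C)$, and let $Q\in H^2_{S^1}(\mathrm{pt};\F_2)$ be the generator. Comparing the pullbacks of the $S^1$-equivariant Thom class of $F$ along the inclusion of the $S^1$-fixed locus through $\mathbf{BF}_{\mathcal{X},\underline{\fraks}}$ and through its $S^1$-fixed part $E(\R)\hookrightarrow F(\R)$ produces an identity of the form
\[
e_{S^1}(E(\C))\cup\theta\;=\;e_{S^1}(H^+(\mathcal{X}))\cup e_{S^1}(F(\C))
\]
in $H^*_{S^1}(T^2;\F_2)$, where $\theta$ is the class induced by $\mathbf{BF}_{\mathcal{X},\underline{\fraks}}$. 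The key input is that $w_1(L_1)\cup w_1(L_2)\neq 0$ in $H^2(T^2;\F_2)$, so that expanding both sides in $Q$ and comparing the top $Q$-degree forces $m\leq n$ after the Fr\o yshov correction from $\mathrm{SWF}(Y,\fraks|_Y)$. Reading this inequality back through the rank formula for $E(\C),F(\C)$ yields
\[
\tfrac{c_1(\fraks)^2-\sigma(X)}{8}\;\leq\;\delta(Y,\fraks|_Y).
\]

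\textbf{Main obstacle.} The conceptual content --- the Stiefel--Whitney class computation on $H^+(\mathcal{X})$ --- is straightforward once the mapping torus is set up. The principal technical difficulty is the proper set-up of a $T^2$-parameterized, relative, $S^1$-equivariant Bauer--Furuta invariant landing in the Seiberg--Witten Floer spectrum of $Y$, and the verification that the Fr\o yshov invariant $\delta(Y,\fraks|_Y)$ enters exactly as a shift in the $S^1$-equivariant homotopy degree of $\mathrm{SWF}(Y,\fraks|_Y)$, so that the cohomological comparison propagates cleanly to the stated numerical inequality. This is essentially the parameterized refinement carried out in \cite{KMT23}, and the bulk of the work consists in adapting their formalism from the specific setting treated there to the current $T^2$-family with rational homology sphere boundary.
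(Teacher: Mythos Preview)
The paper does not prove this theorem; it is quoted as an input from \cite{KT22, KMT23} (introduced as ``the family diagonalization theorem over $T^2$'') and used as a black box in the proof of \cref{intro: s1_family_surface_closed}. Your outline is a correct sketch of the argument one expects in that reference: forming the $T^2$-mapping torus from the commuting pair $(f_1,f_2)$, identifying $H^+(\mathcal{X})\cong L_1\oplus L_2$ with $w_1(L_1)\cup w_1(L_2)\neq 0$, running the $S^1$-equivariant Euler class comparison over $T^2$, and letting the Fr\o yshov invariant enter through the boundary Seiberg--Witten Floer spectrum. You have correctly flagged the only substantive technical point, namely the construction of the relative families Bauer--Furuta map landing in $\mathrm{SWF}(Y,\fraks|_Y)$ and the bookkeeping that converts the resulting rank inequality into the $\delta$-shifted bound; since the paper itself defers this to \cite{KMT23}, there is nothing further to compare.
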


\begin{proof}[Proof of Theorem~\ref{intro: s1_family_surface_closed}]
To illustrate the proof, we start with the a specific example. The proof of Theorem~\ref{intro: main_lemma} make it clear to the reader how to generalize the proof.
 
Let $(C,\tau)$ be the Akbulut cork. It was shown in Example~\ref{ex: akbulut cork}, that $(C,\tau)$ admits a filling as $S^1$-family, which consisted of a cobordism $W$ to $\Sigma(2,3,7)$ followed by a filling $W_0$ of $\Sigma(2,3,7)$. Define
\[
X:= C \cup W \cup W_0
\]
We connect sum a copy of $S^2 \times S^2$ on the side of the cork $C$. As described in the proof of  \cite[Theorem 1.9]{KMT23first}, the involution $\tau$ on $\Sigma(2,3,7)$ extends smoothly over $W_0$. We call this diffeomorphism $\tilde{\tau}$. This defines a diffeomorphism $\Phi$ on $X \# S^2 \times S^2$ as follows:
\[
\Phi:= \tilde{\tau} \circ f \circ f_{W_0} : C \# S^2 \times S^2 \cup W \cup W_0 \rightarrow C \# S^2 \times S^2 \cup W \cup W_0.
\]
Note that by definition $\Phi$ maps $C \# S^2 \times S^2$ to itself (although it is not identity on the boundary). Moving forward we will need to use a certain type of Dehn twist on spheres. Since this is the only place in the article, where we use it, we start with a brief definition.

Given a smoothly embedded $(-2)$-sphere $S$ in an oriented 4-manifold $X$, one can get a diffeomorphism $T_S : X \to X$ called the \textit{Seidel Dehn twist} which is supported in a neighborhood of $S$.
Let us recall the construction of $T_S$ below.
A standard reference is \cite[Section 2]{Sei08}.

First, there is a concrete compactly-supported diffeomorphism of $T^\ast S^2$ given as the monodromy of the nodal singulartity $\mathbb{C}^3 \to \mathbb{C}$, $(z_1,z_2,z_3) \mapsto z_1^2+z_2^2+z_3^2$ near the origin of $\mathbb{C}^3$.
We call this diffeomorphism of $T^\ast S^2$ the {\it model Dehn twist}.
It turns out that the model Dehn twist acts as the antipodal map on $S^2$.
Now, given a $(-2)$-sphere $S$ in $X$,
by fixing an identification between $T^\ast S^2$ and a neighborhood of $X$, we implant the model Dehn twist into $X$.
This is the Seidel Dehn twist $T_S : X \to X$ about $S$.
It is clear that the action of $T_S$ on $H_2(X;\Z)$ is given by
\[
(T_S)_\ast(x) = x+(x\cdot[S])[S].
\]
When a $(+2)$-sphere $S$ is given, we can consider a similar diffeomorphism by reversing orientation, and we call it also a Dehn twist about $S$, whose action on $H_2(X;\Z)$ is
\[
(T_S)_\ast(x) = x-(x\cdot[S])[S].
\]

We now consider the Seidel Dehn-twist $T_{\Delta}$ on $C \# S^2 \times S^2$ along the diagonal sphere $\Delta \subset S^2 \times S^2 \subset C \# S^2 \times S^2$. As discussed above, $T_{\Delta}$ is supported in a neighborhood of the $\Delta \subset S^2 \times S^2$. We now claim that $\Delta$ and $\Phi(\Delta)$ are exotic copies of each other. 

To see this, we argue as follows. Towards contradiction we assume that $\Delta$ and $\Phi(\Delta)$ are smoothly isotopic. We consider two diffeomorphisms $T_{\Delta}$ and $T_{\Phi(\Delta)}$, the Seidel Dehn twist along $\Delta$, and $\Phi(\Delta)$ respectively. Observe that the diagonal $\Delta$ (and hence $\Phi(\Delta)$) is a sphere with self-intersection $(+2)$, making it possible to apply the Seidel Dehn twist along these spheres. Note that since Seidel Dehn twists are supported in the neighbourhood of the associated sphere, we can consider $T_{\Delta}$ or $T_{\Phi(\Delta)}$ to be a diffeomorphism of $C \# S^2 \times S^2 \subset X \# S^2 \times S^2$. It follows from our assumption that 
\begin{equation}\label{eq: seidel_1}
T_{\Delta} \simeq T_{\Phi(\Delta)}.
\end{equation}
Moreover, it follows from the definition of the Dehn twist that the following relation holds
\begin{equation}\label{eq: seidel_2}
T_{\Phi(\Delta)} \simeq \Phi \circ T_{\Delta} \circ \Phi^{-1}.
\end{equation}
Combining Relation~(\ref{eq: seidel_1}) and (\ref{eq: seidel_2}) we get
\begin{equation}
T^{-1}_{\Delta} \circ \Phi \circ T_{\Delta} \circ \Phi^{-1} \simeq \mathrm{id}.
\end{equation}
We now observe that the Dehn twist that $T_{\Delta}$ acts as $\mathrm{diag(1,-1)}$ on $H^{+}(X \# S^2 \times S^2)$ while $\Phi$ acts as $\mathrm{diag(-1,1)}$. Let us now define a $\spinc$-structure $\fraks$ on $X \# S^2 \times S^2$ as follows. Let $\s_0$ be the spin structure on $W_0$. Let $\s_1$ be the $\spinc$-structure representing generator $(-1,-1)$ on $W$ which is preserved by the action of $f$. Finally let $\fraks_2$ to be the spin structure on $C \# S^2 \times S^2$. Define
\[
\fraks:= \fraks_2 \cup \fraks_1 \cup \fraks_0.
\]
Moreover, it follows from the construction that both $\Phi$ and $T_{\Delta}$ preserves the $\spinc$-structure $\fraks$. Hence, Theorem~\ref{thm: rel T2} implies that
\[
\frac{c_1(\fraks)^2-\sigma(X)}{8} \leq 0.
\]
This leads to a contradiction since $c_1(\fraks)^2-\sigma(X) > 0$ by construction. This shows that $\Phi(\Delta)$ and $\Delta$ are not smoothly isotopic in $C \# S^2 \times S^2 \subset X \# S^2 \times S^2$ (relative to the boundary of $C$). 

We now have to show that $\Phi(\Delta)$ and $\Delta$ are topologically isotopic in $C \# S^2 \times S^2 \subset X \# S^2 \times S^2$ relative to the boundary of $C$. However, since this is similar to the topological isotopy argument given in the proof of Theorem~\ref{RP2emb}, we omit it. This ends the proof. 

\end{proof}


As corollaries \cref{main real}, we can give the two kinds of exotic surfaces. 
\begin{theo}\label{RP2emb}
 Let $(Y, \tau)$ be a strong cork detected by spin$^c$ preserving family Seiberg--Witten theory.  
    Then, for any contractible bound $C$ of $Y$ which is simple (with respect to $\C P^2$), $C \# \C P^2$ contains a pair of weak equivalent exotic $\R P^2$'s. Moreover, this also gives an exotic pair of $\R P^2$'s in 
    \[
    Z \cup C \# \C P^2
    \]
    for any filling $(Z, f_Z)$ as $S^1$-family of $(Y, \tau)$.
\end{theo}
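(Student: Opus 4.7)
The plan is to apply \cref{main real} with $W = \C P^2$, after producing an admissible $\R P^2 \subset \C P^2$ and an integral-cohomology-trivial extension of $\tau$ to $C \# \C P^2$. The topological isotopy will be handled separately via Freedman's theorem and Orson--Powell \cite{OP22}.

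For the admissible surface, I would invoke \cref{ex adm sur}: the standard $\R P^2$-knot in $S^4$ (whose double branched cover is $\overline{\C P}^2$) and the degree-two complex curve $C_2 \subset \C P^2$ (with double branched cover $S^2 \times S^2$) are both admissible, and admissibility is preserved under pairwise connected sum. Performing the pairwise connected sum in $S^4 \# \C P^2 \cong \C P^2$ then produces an admissible $\R P^2 \subset \C P^2$, call it $S$, whose double branched cover is $\overline{\C P}^2 \# (S^2 \times S^2)$; the required real spin$^c$ structure $\fraks_R$ assembles from the generator of $H^2(\overline{\C P}^2; \Z)$ and the spin structure on $S^2 \times S^2$, and the numerical conditions $b^+(\Sigma_2(S)) = b^+(\C P^2) = 1$ and $c_1(\fraks_R)^2 - \sigma(\Sigma_2(S)) = 0$ follow immediately.

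Because $(C, \tau)$ is simple with respect to $\C P^2$, $\tau$ extends to an orientation-preserving self-diffeomorphism of $C \# \C P^2$. Its action on $H^2(C \# \C P^2; \Z) \cong \Z$ is $\pm 1$; when it is $-1$, I would compose with complex conjugation on the $\C P^2$-summand (arranged by a small ambient isotopy to be the identity near the gluing region of $C$) to obtain an extension $\tilde{f}$ with $\tilde{f}^{*} = \id$ on $H^{*}(C \# \C P^2; \Z)$. \cref{main real} then yields that $S$ and $\tilde{f}(S)$ are not smoothly isotopic rel boundary in $C \# \C P^2$, while by construction they are weakly equivalent via $\tilde{f}$. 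For the topological isotopy, Freedman's theorem supplies a topological extension $\tilde{\tau}_{\mathrm{top}}$ of $\tau$ over $C$, which combined with the identity on the $\C P^2$-summand fixes $S$ pointwise; the composition $\tilde{f} \circ \tilde{\tau}_{\mathrm{top}}^{-1}$ is a self-homeomorphism of $C \# \C P^2$ that restricts to the identity on $\del$ and acts trivially on $H^{*}$, hence is topologically isotopic to $\id$ rel boundary by \cite{OP22}, and tracking $S$ through this isotopy produces a topological isotopy from $\tilde{f}(S)$ to $S$ rel boundary.

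For the closed case, I would apply the \emph{More strongly} half of \cref{main real} to the same data $(W, S, \tilde{f})$ together with the prescribed $S^{1}$-family filling $(Z, f_{Z})$; this gives the smooth non-isotopy between $S$ and $(f_{Z} \# \tilde{f})(S)$ in $Z \cup_{Y} C \# \C P^2$, and the topological isotopy follows from the closed simply-connected version of \cite{OP22} applied to $(f_{Z} \# \tilde{f}) \circ (f_{Z} \# \tilde{\tau}_{\mathrm{top}})^{-1}$. The principal obstacle is the packaging of an admissible $\R P^2$ in $\C P^2$ together with an $H^{*}$-trivial extension of $\tau$; once these are in place, both the smooth non-isotopy and the topological isotopy fall out of the machinery established earlier in the paper.
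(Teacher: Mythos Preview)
Your proof is correct and follows essentially the same route as the paper: the same admissible $\R P^2$ built as the pairwise connected sum $(2\C P^1 \subset \C P^2)\#(\R P^2_+ \subset S^4)$, the same correction by complex conjugation on $\C P^2$ to make the extension act trivially on cohomology, the same appeal to \cref{main real} for the smooth obstruction, and the same Freedman-extension/\cite{OP22} argument for the topological isotopy. The only cosmetic difference is in the closed case: the paper simply observes that the rel-boundary topological isotopy in $C\#\C P^2$ extends by the identity over $Z$ (since $S$ lies entirely in $C\#\C P^2$ and $(f_Z\#\tilde f)(S)=\tilde f(S)$), which is a little more direct than invoking a closed-manifold isotopy result for $(f_Z\#\tilde f)\circ(f_Z\#\tilde\tau_{\mathrm{top}})^{-1}$.
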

\begin{theo}\label{real main: link}
      Let $(Y, \tau)$ be a strong cork detected by  $\spinc$-preserving family Seiberg--Witten theory. Then, for any contractible bound $C$ of $Y$ which is simple (with respect to $S^2 \times S^2 $), $C \# (\#_3 S^2 \times S^2)$ contains a pair of embeddings of surface links
      \[
      f, f': S^2_1 \cup S^2_2 \cup S^2_3 \cup S^2_4 \hookrightarrow C \# \#_3 S^2 \times S^2
      \]
  satisfying the following properties: 
  \begin{itemize}
      \item[(i)] $f(S^2_i)$ and $f'(S^2_i)$ are smoothly isotopic for every $i$,
      \item[(ii)] the complements of the images of $f$ and $f'$ are diffeomorphic,
      \item[(iii)] $f$ and $f' $ are exotic, i.e. there is a topological isotopy from $f$ to $f'$ but no smooth isotopy. 
  \end{itemize}
      Moreover, this also gives an exotic pair of four component 2-links in 
    \[
    Z \cup C \# \#_3 S^2\times S^2
    \]
    for any filling $(Z, f_Z)$ as $S^1$-family of $(Y, \tau)$, which has the same properties (i) and (ii). 
\end{theo}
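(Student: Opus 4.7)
The plan is to adapt the proof of \cref{RP2emb} to produce a 4-component exotic surface link of 2-spheres, by applying \cref{main real} with $W = \#_3 S^2 \times S^2$ and an admissible surface $S \subset W$ consisting of four pairwise disjoint 2-spheres. A natural choice is to start from the admissible pair $(S^2 \times S^2, S^2 \times \{N, S\})$ of \cref{ex adm sur} and iterate the pairwise connected sum construction to obtain an admissible 4-component 2-sphere link inside $\#_2 S^2 \times S^2$; this embeds into $W = \#_3 S^2 \times S^2$ with an extra $S^2 \times S^2$ summand reserved for accommodating the simple cork-twist extension. Admissibility of $S$ then follows from the closure of admissible pairs under pairwise connected sum recorded in \cref{ex adm sur}.

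Next, I would construct a diffeomorphism $\widetilde{f} : C \# W \to C \# W$ extending $\tau$ with $\widetilde{f}^\ast = \mathrm{id}$ on $H^\ast(C \# W; \mathbb{Z})$. Simplicity of $(C, \tau)$ with respect to $S^2 \times S^2$ provides an extension $\widetilde{\tau}$ of $\tau$ to $C \# S^2 \times S^2$; after post-composing with a self-diffeomorphism of $S^2 \times S^2$ realizing the inverse of $\widetilde{\tau}^\ast$ on the hyperbolic form (available by Wall's classical realization theorem), the resulting extension acts trivially on cohomology of this summand. Extending by the identity on the remaining $\#_2 S^2 \times S^2$ summands produces $\widetilde{f}$ with $\widetilde{f}^\ast = \mathrm{id}$. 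With $S$ positioned so that some of its components lie in the stabilization summand where $\widetilde{\tau}$ acts nontrivially on embedded spheres, $\widetilde{f}$ moves $S$ to a genuinely distinct subset $\widetilde{f}(S)$, and \cref{main real} then yields that the embeddings $f := i_S$ and $f' := \widetilde{f} \circ f$ are not smoothly isotopic rel boundary as surface links in $C \# W$.

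The three required properties are verified as follows. Property (ii) is immediate, since $\widetilde{f}$ globally realizes the correspondence and so restricts to a diffeomorphism between the complements. For (iii), the absence of a smooth isotopy is provided by \cref{main real}, while the topological isotopy follows by combining Freedman's extension theorem \cite{Fr82} (which extends $\tau$ as a homeomorphism of $C$ and hence makes $\widetilde{f}$ topologically isotopic to the identity) with an Orson--Powell-type argument \cite{OP22} to promote this to a topological isotopy of the embedded link. For (i), each component $f(S^2_i)$ admits a dual sphere of the form $\{\ast\} \times S^2$ in its ambient $S^2 \times S^2$ summand, and by Gabai's 4D light bulb theorem \cite{Ga20}, applied inside $C \# W$ with the extra $S^2 \times S^2$ summands providing the maneuvering room, each $f(S^2_i)$ is smoothly isotopic to $f'(S^2_i)$ individually. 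Finally, the closed 4-manifold version inside $Z \cup C \# W$ follows from the second assertion of \cref{main real} applied with the $S^1$-family filling $(Z, f_Z)$ of $(Y, \tau)$, and the verification of properties (i)--(iii) carries over verbatim.

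The main obstacle I anticipate is choreographing $\widetilde{f}$ so that it simultaneously satisfies (a) $\widetilde{f}^\ast = \mathrm{id}$ on cohomology, (b) genuinely moves the link $S$ at the level of subsets (so that $\widetilde{f}(S)$ is a different embedded link), and (c) allows a common dual sphere for Gabai's theorem for each individual component. These three constraints pull in different directions, and it is precisely this tension that forces the full $\#_3 S^2 \times S^2$ stabilization to provide enough decoupled ambient room. A secondary subtlety is that the closed-manifold version requires a compatible $S^1$-family filling, for which one can use the explicit construction from \cref{ex: akbulut cork} (or its analog for any cork detected by spin$^c$-preserving family Seiberg--Witten theory).
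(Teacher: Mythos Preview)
Your overall strategy---apply \cref{main real} to an admissible 4-component sphere link in $\#_3 S^2\times S^2$, then verify (i)--(iii)---matches the paper's, but there is a genuine gap in your verification of property (i), and the surface/diffeomorphism placement is not quite consistent.

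First, the construction. Pairwise connected sum of two copies of $(S^2\times S^2,\, S^2\times\{N,S\})$ yields a \emph{three}-component link, not four; to get four sphere components one needs three copies, which is exactly what the paper does. The paper's link $F=F_1\cup F_2\cup F_3\cup F_4$ lives in all three $S^2\times S^2$ summands, with $F_1,F_2$ meeting the first summand (where the cork-twist extension $\widetilde f$ is supported) and $F_3,F_4$ lying entirely in the last two summands (where $\widetilde f=\id$). Your description first puts the whole link in two summands disjoint from the cork-extension summand (which would make $\widetilde f(S)=S$), then says some components should lie in the cork summand; these cannot both hold.

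Second, and more seriously, your appeal to Gabai's light bulb theorem for (i) is not justified. Gabai requires a \emph{common geometric} dual sphere for both $f(S_i)$ and $f'(S_i)=\widetilde f(f(S_i))$. The natural candidate $\{\ast\}\times S^2$ is dual to $f(S_i)$, but $\widetilde f$ is the (complicated) cork-twist extension, and there is no reason $\widetilde f(f(S_i))$ meets $\{\ast\}\times S^2$ in a single transverse point. The paper avoids this issue entirely: for $F_3,F_4$ one has $\widetilde f(F_i)=F_i$ trivially, and for $F_1,F_2$ the paper invokes the one-stabilization isotopy theorem of Auckly--Kim--Melvin--Ruberman--Schwartz \cite{AKMRS19}. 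That result only asks that the two surfaces be homologous, non-characteristic, and have simply-connected complements---conditions preserved by the diffeomorphism $\widetilde f$---and the extra $S^2\times S^2_2$ summand is precisely the stabilization needed to apply it inside $C\# S^2\times S^2_1 \# S^2\times S^2_2$. This is the key lemma your proposal is missing.
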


\begin{proof}[Proof of \cref{RP2emb} and \cref{real main: link}]
The proofs of \cref{RP2emb} and \cref{real main: link} are similar. 
Obstructions to smooth isotopies are constructed by \cref{main real}, so we need to ensure the assumptions of \cref{main real} are satisfied. 
\begin{itemize}
    \item For \cref{RP2emb}, we consider the relative connected sum 
    \[
   ( 2 \C P^1 \subset \C P^2) \# (\R P^2_+ \subset S^4) 
    \]
    which is an admissible surface from \cref{ex adm sur}. Here $\R P^2_+$ is the standard embedding of $\R P^2$ into $S^4$ obtained as the quotient with respect to the complex conjugation on $\overline{\C P}^2$. 
    Suppose $C$ is killed by $\C P^2$.  By precomposing with a diffeomorphism on $\C P^2$ (which acts as $(-1)$ in homology) if needed, one can assume the extension is $\id$ on its cohomology. This ensures the assumption of \cref{main real} are satisfied in this case.

    \item For \cref{real main: link}, we consider a surface $W= S^2\times S^2_1 \# S^2\times S^2_2  \# S^2\times S^2_3$ and 
    \begin{align*}
&F :=  S^2 \times \{ N\}_1 \cup S^2 \times \{ S\}_1 \# S^2 \times \{ N\}_2 \cup S^2 \times \{ S\}_2 \# S^2\times \{N\}_3 \cup S^2\times \{S\}_3 \\
&\subset C \# S^2 \times S^2_1 \# S^2 \times S^2_2   \# S^2 \times S^2_3, 
\end{align*}
where $N$ and $S$ are the north and south pole points in $S^2$ and $S^2_i$ represents the $i$-th copy of $S^2 \times S^2$.
This surface link is also admissible from \cref{ex adm sur}. Here we suppose the cork $C$ is killed by $S^2\times S^2$. 
\end{itemize}
In the latter case, we will specify an extension of $C \#3 S^2\times S^2$ later to see each component is relatively and smoothly isotopic.  
This ensures the existence of obstructions to smooth isotopies. 
Now, we consider the existence of topological isotopes. 

Let $(Y, f)$ be a strong cork detected by  $\spinc$-preserving family Seiberg--Witten theory, which is killed by $W= \C P^2 $ or $\#_3 (S^2\times S^2)$, 
i.e. there is an orientation-preserving and homologically trivial diffeomorphism  
\[
\wt{f} : C \#  W \to C \#  W
\]
which gives an extension of the cork action $f$ on $Y$ such that $\wt{f}_* = \id$. In the case of $W=\#_3 S^2\times S^2$, we take a cork $(C, f:\partial C\to C)$ which is killed by one $S^2\times S^2$ and take an extension $C \# 3S^2\times S^2$ of $\tau$ so that $\wt{f}$ is the identity on latter two components $S^2\times S^2_2 \# S^2\times S^2_3$. Then, we claim the embeddings
\[
\iota : S \hookrightarrow C \# W  \text{ and } \wt{f}\circ \iota : S \hookrightarrow C \# W 
\]
are topologically isotopic rel boundary. Note that these surfaces are weakly equivalent each other from the construction. 

We take a topological extension of a cork action $f$ into $C$, denoted by $g$, and consider $g\#  \id$ on $C \# W$. Then the composition $\wt{f}^{-1} \circ (g\#  \id)$ is a relative homologically trivial homeomorphism on $C \# W$. Therefore, one can use \cite{OP22} to get a relative topological isotopy $H_t$ from $\id$ to $\wt{f}^{-1}\circ  (g\#  \id)$. So, $\wt{f} \circ H_t$ gives a topological isotopy from $\wt{f}$ to $g\#  \id$. 
This implies $\iota $ and $\wt{f}\circ \iota $ are topologically isotopic on $C \# W$ rel boundary. This particular provides the topological isotopes between 
\[
\iota : S \hookrightarrow Z\cup C \# W \text{ and } (f_Z\#\wt{f}) \circ \iota : S \hookrightarrow Z\cup C \# W. 
\]

Finally, we prove each component of the four component 2-links is smoothly isotopic. 
With respect to these four components of $F$, we write $F= F_1 \cup F_2 \cup F_3 \cup F_4$. 
Note that we took an extension of the cork action $f$ to $C \# S^2 \times S^2_1$ and extend it by $\id$ on two more $S^2 \times S^2_2 \# S^2 \times S^2_3 $ components, which is denoted by 
\[
\wt{f} : C \# S^2 \times S^2_1 \# S^2 \times S^2_2 \# S^2 \times S^2_3  \to C \# S^2 \times S^2_1 \# S^2 \times S^2_2 \# S^2 \times S^2_3 . 
\] 
We claim $F_i$ and $\wt{f}(F_i)$ are smoothly isotopic for each $i$. From the definition of $\wt{f}$, it is easy to see 
\[
\wt{f} (F_3) = \id (F_3) = F_3 \text{ and } \wt{f} (F_4) = \id (F_4) = F_4.
\]

 Note that the complement of $S^2 \times \{*\}$ in $S^2 \times S^2$ is simply-connected. So, one can see that $C \# W \setminus F_i$ and $C \# W \setminus \wt{f} (F_i)$ are simply-connected as well. 

For $F_1$ and $F_2$, since $F_1$ and $\wt{f}(F_1)$ ($F_2$ and $\wt{f}(F_2)$) are completely contained in $C\# S^2 \times S^2_1 \# S^2\times S^2_2$ and they are not characteristic with simply-connected complements, one can use \cite[Theorem in page 1]{AKMRS19} 
to obtain a smooth isotopy between $F_1$ and $\wt{f}(F_1)$ (resp. $F_2$ and $\wt{f}(F_2)$). This completes the proof. 
  
\end{proof}

We are now in place to prove some of our main theorems. Now \cref{RP2emb1} follows from the Proof of Theorem~\ref{intro: thm RP2 local}.
\begin{proof}[Proof of Theorem~\ref{intro: thm RP2 local}] 
Follows from the Theorem~\ref{RP2emb}.
\end{proof}

\begin{proof}[Proof of Theorem~\ref{RP2emb1}]
Follows from Theorem~\ref{intro: thm RP2 local}.
\end{proof}

\begin{proof}[Proof of Theorem~\ref{S2link}]
Follows from \cref{real main: link}.
\end{proof}

\begin{proof}[Proof of Theorem~\ref{intro: main_lemma}]
With the results proved so far, the proof almost becomes a tautology. For the case of $\mathbb{R}P^2$, this follows from the Theorem~\ref{RP2emb}. For the case of $S^2$, the proof follows from a modification of the proof of Theorem~\ref{intro: s1_family_surface_closed}. Indeed, one replaces the $(C,\tau)$ by any strong cork $(Y,\tau)$ detected by the $\spinc$-preserving family Seiberg--Witten theory (which is killed by $S^2 \times S^2$) and the filling $W \cup W_0$ of $(C, \tau)$ by any filling $(Z,f_Z)$ as $S^1$-family of $(Y,\tau)$. The rest of the proof then implies the desired conclusion. 
\end{proof}

\subsection{Exotic embedding of $S^3$}
We provide equivalent exotic embedding of 3-spheres in small 4-manifolds. 

\begin{proof}[Proof of \cref{theo: intro S3}]
As in proof of Lemma~\ref{lem: sphere from adjunction}, let $X$ be a closed smooth simply-connected oriented 4-manifold given in the definition of simple SW-effective cork (\cref{defi: weakly effective cork}).
Let 
$\Phi : X \#S^2\times S^2 \to X \#S^2\times S^2$
be the exotic diffeomorphism obtained in the proof of Lemma~\ref{lem: sphere from adjunction}.
Recall that $\Phi$ is supported in the interior of $C\#S^2\times S^2$.
Set $Y_i := \Phi^i(Y) \subset C\#S^2\times S^2$ for $i \in \Z$.
We shall prove that $Y_i$ are strongly equivalent to $Y$ but mutually relatively exotic.

By construction, it is clear that $Y_i$ are strongly equivalent to $Y$ and topologically isotopic to $Y$ relative to the boundary of $C\#S^2\times S^2$.
Thus it suffices to prove that $Y_i$ are mutually not smoothly isotopic rel boundary.
Since $Y_i$ and $Y_j$ are smoothly isotopic rel boundary if and only if $Y_{i-j}$ and $Y$ are smoothly isotopic rel boundary, it is enough to see that $Y_i$ is not smoothly isotopic to $Y$ rel boundary for $i\neq0$.

Let $i\neq0$.
Then we have that $\FSW(X\#S^2\times S^2,\fraks',\Phi^i) \neq 0$, where $\fraks'$ is the spin$^c$ structure on $X\#S^2\times S^2$ considered in Lemma~\ref{lem: sphere from adjunction}.
Now, suppose that $Y_i$ is smoothly isotopic to $Y$ rel boundary of $C\#S^2\times S^2$.
This implies that $Y_i$ is smoothly isotopic to $Y$ in $X\#S^2\times S^2$.
After applying a smooth isotopy taking $Y_i$ to $Y$ to $\Phi^i$, we can suppose that $\Phi^i\circ \iota=\iota$, where $\iota : S^3 \hookrightarrow X\#S^2\times S^2$ is the embedding defining $Y$ .
Thus the mapping torus of $\Phi^i$ is the form of the fiberwise connected sum of mapping tori of a diffeomorphism $f_1 : X \to X$ and a diffeomorphism $f_2 : S^2\times S^2\to S^2\times S^2$.
Then we can apply the families gluing \cite[Theorem 9.5]{BK18} to $\Phi^i$ to deduce that $\FSW(X\#S^2\times S^2,\fraks',\Phi^i)=0$ from that $f_2 : S^2\times S^2 \to S^2\times S^2$ is homologically trivial, which follows from the homological triviality of $\Phi^i$.
This is a contradiction and completes the proof.
\end{proof}

\subsection{Algebraic and geometric intersection numbers of embedded spheres}
We end by proving Theorem~\ref{geometric int number} which gives an interesting discrepancy between algebraic and geometric intersection number of embedded 2-spheres.
\begin{proof}[Proof of \cref{geometric int number}]
Let $S$ be a smoothly embedded 2-sphere in $C\#S^2\times S^2$ whose homology class satisfies $[S] = [S^2 \times \{*\} ] + n [\{*\} \times S^2 ] $ for $n>0$, obtained as a graph of degree-$n$ map $S^2\to S^2$.  Now, we consider 
\[
E:= \{*\} \times S^2 \hookrightarrow C \# S^2 \times  S^2 .
\]
Then, we see $E$ is a dual sphere of $S$. 
Define $S'$ to be $\Phi(S)$ where $\Phi$ is defined by a certain commutator relative diffeomorphism on $C \# S^2 \times S^2 $ in the proof of \cref{lem: sphere from adjunction}. 
By isotopy, we assume $E$ intersects $S'$ transversely. Since the algebraic intersection number $E \cdot S'$ is one, we see the number of intersections $E \cap  S'$ is assumed to be $2m+1$ for some $m\in \Z_{\geq 0}$. Note that $m$ cannot be $0$ from Gabai's 4D light bulb theorem \cite{Ga20} \footnote{The original 4D light bulb theorem is sated only for surfaces in closed 4-manifolds, but one can check the proof also works for 4-manifolds with boundary without any essential change.  }  and \cref{lem: sphere from adjunction}.
Now, by composing certain ambient isotopies 
\[
H_t, H'_t : C \# S^2 \times S^2 \times [0,1] \to C \# S^2 \times S^2
\]
rel boundary (with $H_0 = H_1 = \id$), we assume the following: 
\begin{itemize}
    \item the relative diffeomorphism $H_1 \circ \Phi : C \# S^2 \times S^2 \to C \# S^2 \times S^2$ fixes a small closed neighborhood $D:= D^4 \subset C \# S^2 \times S^2$. 
    \item all intersection points of the image of $H'_1\circ S': S^2 \hookrightarrow C \# S^2 \times S^2$ and $E$ lie the interior of $D$. 
\end{itemize}
Now, we remove pairs of negative and positive intersection points by the surgery along these points, adding 2-dimensional 1-handles, to obtain a new embedding written by $H'_1\circ S' \#_m T^2$, which is smoothly isotopic to $m$-internal stabilizations of $H'_1\circ S'$ rel boundary. Then, we see that $H'_1\circ E$ is a dual 2-sphere for $H'_1\circ S' \#_m T^2$ and $H'_1\circ S'$. 
 Now, we see 
 $H \circ \Phi(H'_1\circ S' \#_m T^2)$ and $H'_1\circ S'  \#_m T^2$ are smoothly isotopic again from Gabai's 4D light bulb theorem \cite{Ga20}.
 Now, we apply the family adjunction inequality as in the proof of \cref{lem: surfaces from adjunction} to see 
 \[
 2m -2 = 2 g(H'_1\circ S'  \#_m T^2) -2 \geq [H'_1\circ S'  \#_m T^2]^2 =2n . 
 \]
 This completes the proof. 
\end{proof}

\appendix

\bibliographystyle{amsalpha}
\bibliography{tex}

\end{document}